\newcommand{\Ker}{\operatorname{Ker}}
\newcommand{\Lip}{\operatorname{Lip}}
\newcommand{\Mol}{\operatorname{Mol}}
\newcommand{\norm}[1]{\left\lVert#1\right\rVert}
\newcommand{\conv}{\operatorname{conv}}
\newcommand{\ext}{\operatorname{ext}}
\newcommand{\diam}{\operatorname{diam}}
\newcommand{\FF}{\mathcal F}
\newcommand{\e}{\mathcal \varepsilon}
\newcommand{\Mid}{\operatorname{Mid}}
\newcommand{\PP}{\mathcal P}
\newcommand{\card}{\operatorname{card}}
\newcommand{\RR}{\mathbb{R}}
\newcommand{\EE}{{\mathcal E}}
\newcommand{\<}{\langle}
\renewcommand{\>}{\rangle}
\newtheorem{theorem}{Theorem}[section]
\newtheorem{proposition}[theorem]{Proposition}
\newtheorem{corollary}[theorem]{Corollary}
\newtheorem{lemma}[theorem]{Lemma}
\theoremstyle{definition}
\newtheorem{claim}{Claim}
\newtheorem{definition}[theorem]{Definition}
\newtheorem{example}[theorem]{Example}
\newtheorem{remark}[theorem]{Remark}
\numberwithin{subcase}{case}
\newtheorem{conjecture}[theorem]{Conjecture}
\begin{document}

\title{Geometry and volume product of finite dimensional Lipschitz-free spaces}

\author[M. Alexander]{Matthew Alexander}

\author[M. Fradelizi]{Matthieu Fradelizi}

\author[L. Garc\'ia Lirola]{Luis C. Garc\'ia-Lirola}

\author[A. Zvavitch]{Artem Zvavitch}

\thanks{The first author is supported in part by the Chateaubriand Fellowship of the Office for Science \& Technology of the Embassy of France in the United States and The Centre National de la Recherche Scientifique funding visiting research at Universit\'e Paris-Est Marne-la-Vall\'ee;  the second author supported in part by the Agence Nationale de la Recherche, project GeMeCoD (ANR 2011 BS01 007 01); the  third  author is supported in part by the grants MTM2017-83262-C2-2-P and Fundaci\'on S\'eneca Regi\'on de Murcia 20906/PI/18 also supported by a postdoctoral grant from Fundaci\'on S\'eneca;  the fourth author is supported in part  by the U.S. National Science
Foundation Grant DMS-1101636 and the Bézout Labex, funded by ANR, reference ANR-10-LABX-58.}

\address{Department of Mathematical Sciences, Kent State University,
	Kent, OH 44242, USA} \email{malexan5@kent.edu}\email{lgarcial@kent.edu} \email{zvavitch@math.kent.edu}

\address{Universit\'e Paris-Est Marne-la-Vall\'ee,
Laboratoire d'Analyse et de Math\'{e}matiques Appliqu\'ees (UMR 8050)
Cit\'e Descartes - 5, Bd Descartes, Champs-sur-Marne
77454 Marne-la-Vall\'ee Cedex 2, France}\email{matthieu.fradelizi@u-pem.fr }

\date{April, 2020}

\keywords{Finite metric space, Volume product, Lipschitz-free space, Kantorovich-Rubinstein polytope, Lipschitz polytope}

\subjclass[2010]{Primary: 52A38; Secondary: 52A21, 52A40, 54E45, 05C12}

\begin{abstract}
The goal of this paper is to study geometric and extremal properties of the convex body $B_{\mathcal F(M)}$, which is  the unit ball of the Lipschitz-free Banach space associated with a finite metric space $M$. We investigate $\ell_1$ and $\ell_\infty$-sums, in particular we characterize the metric spaces such that $B_{\mathcal F(M)}$ is a Hanner polytope. We also characterize the finite metric spaces whose Lipschitz-free spaces are isometric. We discuss the extreme properties of the volume product 
 $\mathcal{P}(M)=|B_{\mathcal F(M)}|\cdot|B_{\mathcal F(M)}^\circ|$, when the number of elements of $M$ is fixed. We show that if $\mathcal P(M)$ is maximal among all the metric spaces with the same number of points, then all triangle inequalities in $M$ are strict and $B_{\mathcal F(M)}$ is simplicial. We also focus on the metric spaces minimizing $\mathcal P(M)$, and on Mahler's conjecture for this class of convex bodies. 
 \end{abstract}
\maketitle

\maketitle

\section{Introduction}\label{sec:Intro}
Consider a metric space $(M,d)$ containing a special designated point $a_0$. Such a pair is usually called  a pointed metric space. To this metric space we can associate the space $\Lip_0(M)$ of Lipschitz functions $f:M \to \RR$, with the special property $f(a_0)=0$. We refer to \cite{G, GK, Os, We} for many interesting facts about $\Lip_0(M)$ and its geometry. It turns out that $\Lip_0(M)$ is a Banach space with norm 
\begin{equation*}
\|f\|_{\Lip_0(M)}=\sup\left\{ \frac{f(x)-f(y)}{d(x, y)}, \mbox{  where }  x,y \in M, \mbox{  and } x\not=y\right\}.
\end{equation*}

This space is called the Lipschitz dual of $M$. The closed unit ball of the space $\Lip_0(M)$ is compact for the
topology of pointwise convergence on $M$, and therefore this space has a
canonical predual which is called the Lipschitz-free space over $M$ (also Arens\textendash{}Eells space or transportation cost space) and is denoted by $\FF(M)$. It is the closed subspace of $\Lip_0(M)^*$ generated by the evaluation functionals $\{\delta(x):x\in M\}$ defined by $\delta(x)(f)=f(x)$, for every $f\in \Lip_0(M)$. The case in which $M$ is a finite metric space was studied recently in \cite{DKO, KMO, OO}. 
The goal of this paper is to study the geometry of the unit ball of $\FF(M)$ when $M$ is finite and, in particular, its volume product. 

More precisely, assume our metric space space  $M=\{a_0, \dots, a_{n}\}$, is finite  with metric $d$. Note that each function $f$ in $\Lip_0(M)$ is just a set of $n$ values $f(a_1), \dots, f(a_n)$ and thus we can identify $\Lip_0(M)$ with $\RR^n$, assigning to a function $f \in \Lip_0(M)$ a vector $f=(f(a_1), \dots, f(a_n)) \in \RR^n$. Let us denote  $d_{i,j}=d(a_i, a_j)$, $B_{\Lip_0(M)}$ the unit ball of the $\Lip_0(M)$ and $B_{\mathcal F(M)}=B^\circ_{\Lip_0(M)}$, the unit ball of $\FF(M)$. 
Let $e_1, \dots, e_n$ be the standard basis of $\RR^n$ and let $e_0=0$. For $0\le i\neq j\le n$  let  
\[m_{i,j}=\frac{e_i-e_j}{d_{i, j}}.\]
We denote the set of elementary molecules $m_{i,j}$ 
\[ \Mol(M) = \{m_{i,j}: a_i,a_j\in M, i\neq j\}.\]
Then $\|f\|_{\Lip_0(M)}=\max_{m\in \Mol(M)}\langle f,  m\rangle$. We note that $B_{\mathcal F(M)} = \conv(\Mol(M))$. We shall study the convex bodies in $\RR^n$ that can be obtained as unit ball of a Lipschitz-free space. For example, these bodies have at most $n(n+1)$ vertices, but this is not the only constraint as we shall see. 

In section 2, we explain the correspondence between metric spaces and weighted graphs that is used throughout the article. To each finite metric space $(M,d)$ one can associate a canonical weighted graph $G(M,d)$, the weights on the edges being the distances between the vertices. Then the distance in $M$ corresponds to the shortest path distance in the graph. After removing the unnecessary edges, the edges that appear in the graph model correspond exactly to the vertices of the unit ball of the Lipschitz-free space associated with it. This was recently proved by Aliaga and Guirao \cite{AG} (see also \cite{AP}) in a more general setting: a molecule $m_{i,j}$ is an extreme point of $B_{\mathcal F(M)}$ if and only if $d_{i,j}< d_{i,k}+d_{k,j}$ for every $k\notin \{i,j\}$. We give a simpler proof for finite sets in Section~\ref{sec:graphs}.  We further extend this characterization by proving that the dimension of the face of $B_{\FF(M)}$ which contains $m_{i,j}$ in its interior is exactly the number of points in the geodesics (or the metric segment $[i,j]$) joining $i$ to $j$ in the graph, this is our Theorem~\ref{theor:faces}.  We point out the connection with the so-called Kantorovich-Rubinstein polytopes (studied and popularized in \cite{Ve,GP}). 

In Section \ref{sec:decom}, we explore the conditions for a Lipschitz-free space to be decomposed into $\ell_1$-sum or $\ell_\infty$ sum of spaces. One way of producing an $\ell_1$-sum is to start from two metric spaces $M,N$ and to consider $M\diamond N$ the metric space obtained by identifying the distinguished points of $M$ and $N$. Then one has  $\FF(M\diamond N)=\FF(M)\oplus_1\FF(N)$. In fact we prove in Theorem \ref{th:1decomp} that this is the only possible way to get an $\ell_1$-sum and that such a decomposition is unique. We also characterize $\ell_\infty$-sums in Theorem \ref{theo:inftydecomp} and use these theorems to describe exactly when a Lipschitz free space can be a Hansen-Lima  space, that is a space obtained by taking $\ell_1$ or $\ell_\infty$ sums of $\ell_1^n$ or $\ell_\infty^n$  equivalently when their unit balls are \emph{Hanner polytopes}. 
 
 In Section \ref{sec:isometries}, we introduce the weighted incidence matrix associated with the canonical graph of a metric space. Its kernel is called the cycle space or first homology group of the graph. Using these notions, we prove that two finite metric spaces have isometric Lipschitz-free spaces if and only if their canonical graphs have the same number of vertices and edges and there is a bijection between their cycles which is constant on the 2-connected components. 

In Section \ref{sec:product}, we investigate the volume product of Lipschitz-free spaces. Recall that the polar body $K^\circ$ of a convex body $K$ is defined by
\begin{equation*}
K^\circ = \{y\in\RR^n : \langle y,x\rangle \le 1 \mbox{\ for all\ } x\in K\}.
\end{equation*}
We note that in most cases in this paper we will assume $K$ to be symmetric.
We write $|A|$ to denote the $k$-dimensional Lebesgue measure (volume)  of a measurable set $A \subset \RR^n$, where $k=1,\dots, n$  is the dimension of the minimal flat containing $A$.
Then the volume product of a symmetric convex body $K$ is defined by 
$$
\PP(K)=|K||K^\circ|,
$$ 
 and the volume product is invariant under invertible linear transformations on $\RR^n$.
The maximum for the volume product in the class of symmetric convex bodies is provided by the Blaschke-Santal\'o inequality: 
$\PP(K) \le \PP(B^n_2)$, for all symmetric convex bodies $K \subset \RR^n$, where $B^n_2$ is the Euclidean unit ball. It was conjectured by Mahler that  the minimum of the volume product is attained at the cube in the class of symmetric convex bodies in $\RR^n$, i.e.\ $\PP(K) \ge \PP(B_\infty^n)=\PP(B_1^n)=\frac{4^n}{n!}$ for all symmetric convex bodies $K \subset \RR^n$ where $B_\infty^n$ is the unit cube and $B_1^n=(B_\infty^n)^\circ$ is a cross-polytope. The case of $n=2$ was confirmed by Mahler himself \cite{Ma1}. Very recently, Iriyeh and  Shibata \cite{IS} gave a positive solution for 
$3$-dimensional symmetric case (see also \cite{FHMRZ}). The conjecture was also proved to be true in several particular special cases, like, e.g., unconditional bodies \cite{SR,Me1,R2}, convex bodies having hyperplane symmetries which fix only one common point \cite{BF},  zonoids \cite{R1,GMR} and bodies of revolution \cite{MR1}. Moreover it was also proved \cite{St, RSW, GM} that bodies with some positive curvature assumption cannot be local minimizers of the volume product. An isomorphic version of the conjecture was proved by Bourgain and Milman \cite{BM}: there is a universal constant $c>0$ such that $\PP(K) \ge c^n\PP(B^n_2)$; see also different proofs in \cite{Ku,Na,GPV}. For more information on Mahler's conjecture and the volume product in general, see expository articles \cite{Sc, RZ}.
In Section \ref{sec:product}, we discuss the maximal and minimal properties of
\begin{equation*}
\mathcal{P}(M):=\mathcal{P}(B_{\mathcal F(M)})
\end{equation*}
where $M$ is a metric space of fixed number of  elements. Our main conjecture here is the following.

\begin{conjecture}\label{conj:minimal} Let $M$ be a metric space with $n+1$ points. Then $\mathcal P(M)\geq \mathcal P(B_1^n)$. 
\end{conjecture}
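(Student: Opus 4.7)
The plan is to argue by induction on the number of points of $M$, with the inductive hypothesis being the conjecture for all pointed metric spaces of strictly fewer points. The base cases $n\le 3$ are subsumed by the classical resolutions of Mahler's conjecture in those dimensions, most recently the three-dimensional symmetric case of Iriyeh--Shibata \cite{IS}.

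The first systematic reduction should exploit Theorem~\ref{th:1decomp}. If the canonical graph $G(M,d)$ becomes disconnected upon deletion of the base point $a_0$, then $M=M_1\diamond M_2$ with $|M_i|=n_i+1$ and $n_1+n_2=n$, and $B_{\FF(M)}=B_{\FF(M_1)}\oplus_1 B_{\FF(M_2)}$. The volume product factorizes as
\[
\PP(M)=\binom{n_1+n_2}{n_1}^{-1}\PP(M_1)\,\PP(M_2),
\]
so the inductive hypothesis yields
\[
\PP(M)\ \ge\ \binom{n}{n_1}^{-1}\cdot\frac{4^{n_1}}{n_1!}\cdot\frac{4^{n_2}}{n_2!}\ =\ \frac{4^n}{n!}\ =\ \PP(B_1^n).
\]
When $G(M,d)$ is a tree, iterating this decomposition (equivalently, observing via the extreme-point characterization that the tree-edge molecules are the only extreme points and form a linear basis of $\RR^n$) shows that $B_{\FF(M)}$ is linearly isomorphic to the cross-polytope $B_1^n$, so equality is attained throughout the tree case.

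The remaining task is the \emph{indecomposable} case, in which $G(M,d)\setminus\{a_0\}$ is connected. A natural attack is to dualize and work with the Lipschitz ball $B_{\Lip_0(M)}=\bigcap_{\{i,j\}\in E(G)}\{f\in\RR^n:|f_i-f_j|\le d_{ij}\}$, an intersection of symmetric slabs, and to deform the metric by driving one edge weight up to the length of a competing geodesic path. Doing so suppresses an extreme molecule and eventually removes the corresponding edge, simplifying $G(M,d)$ towards a tree. If one can prove that $\PP(M)$ is non-increasing along such a deformation, then the minimum on indecomposable spaces is attained in the limiting decomposable configuration and the problem closes by the reduction above.

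The main obstacle is precisely this last step: the indecomposable sub-problem is in essence Mahler's conjecture restricted to the class of Kantorovich--Rubinstein polytopes, for which no general method is available. Already the smallest non-trivial example -- four points whose canonical graph is a $4$-cycle -- requires an explicit computation, and the face combinatorics of $B_{\FF(M)}$ described in Section~\ref{sec:graphs} changes abruptly under metric perturbations, so monotonicity along any natural flow is not \emph{a priori} clear. A useful intermediate target is the rigidity statement: show that $\PP(M)=\PP(B_1^n)$ forces $G(M,d)$ to be a tree, equivalently forces $B_{\FF(M)}$ to be linearly a cross-polytope. Such a result, combined with the structural information of Sections~\ref{sec:decom}--\ref{sec:isometries} and local minimality arguments in the spirit of \cite{St,RSW,GM}, would already rule out large classes of candidate minimizers and clarify the correct inductive mechanism for indecomposable spaces.
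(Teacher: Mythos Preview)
This is Conjecture~\ref{conj:minimal}; the paper does \emph{not} prove it, so there is no proof to compare against. What the paper establishes are partial results: the inequality for $n\le 3$ (with a self-contained argument for four-point spaces in Section~\ref{sec:minimal} that avoids invoking \cite{IS}), the cases where $M$ embeds in a tree or is a cycle, and the conditional statement that a minimizer with simplicial $B_{\FF(M)}$ must be a tree. Your $\ell_1$-sum reduction via \eqref{eq:diamond} is correct and is exactly how the paper handles decomposable spaces; you also correctly isolate the biconnected case as the open core of the problem, and your proposal is honest that this step is missing.

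There is, however, a genuine error in your suggested intermediate target. You propose proving that $\PP(M)=\PP(B_1^n)$ forces $G(M,d)$ to be a tree. The paper explicitly refutes this in the paragraph immediately following the conjecture: for every $n\ge 3$ there are non-tree equality cases, the basic one being the $4$-cycle with equal weights (for which $B_{\FF(M)}$ is a linear image of $B_\infty^3$), and more generally any $\diamond$-sum of spiderwebs yields a Hanner polytope (Theorem~\ref{th:hanner}) and hence $\PP(M)=4^n/n!$. So the rigidity statement you aim for is false as stated, and any equality analysis must accommodate these Hanner-type configurations. Separately, your deformation heuristic (increase an edge weight until a molecule ceases to be extreme) runs in the opposite direction from the paper's actual technique: the paper \emph{decreases} a weight, obtains an affine shadow system, and uses \cite{CG,MR2} together with the double-cone criterion of \cite{FMZ} to show that at a minimizer an edge all of whose incident facets are simplices must be a bridge (Proposition~\ref{prop:doublecone}). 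Neither direction currently closes the biconnected case.
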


We will prove a partial result towards Conjecture \ref{conj:minimal}: if $\mathcal P(M)$ is minimal and $B_{\mathcal F(M)}$ is a simplicial polytope, then $M$ is a tree. It was proved  by A. Godard in \cite{Go} that $M$ is a tree if and only if $B_{\mathcal F(M)}$ is an affine image of $B_1^n$, we give a simpler proof of this fact in Section \ref{sec:graphs}. Thus, it would be tempting to say in the conjecture above that the equality is possible if and only if $M$ is a tree.  However, for every $n\geq 3$ we can find a metric space giving equality which is not a tree. The reason is the following. If $M=\{0,1,2,3\}$ is the cycle graph, then $B_{\mathcal F(M)}$ is a linear image of $B_\infty^3$. Now, given a tree $N$ with $n-3$ points then 
$B_{\mathcal F(M\diamond N)}=\conv(B_{\mathcal F(M)}\times\{0\}, \{0\}\times B_{\mathcal F(N)})$ and so we have that 
\[ \mathcal P(M\diamond N) = \frac{3!(n-4)!}{n!}\mathcal P(B_1^{3})\mathcal P(B_1^{n-4}) = 
\frac{4^n}{n!}.\]
For the formula of the volume product of the direct sum of two convex bodies, see e.g. \cite{RZ}. In the previous example, $\mathcal F(M\diamond N)$ is isometric to $\ell_\infty^3\oplus_1 \ell_1^{n-4}$. Hanner polytopes are the conjectured minimizers for the volume product among symmetric convex bodies. 

It is interesting to note that the maximal value for $\mathcal{P}(M)$ is also an extremely interesting and open problem. Indeed $B_{\mathcal F(M)} \not = B_2^n$  for finite $M$. Thus the maximal case will not follow from the Santal\'o inequality and we must look for other maximum(s) along with the possible conjectured minimizers. For metric spaces of three points, this maximum is attained in the case in which all the distances $d_{i,j}$  between different points coincide. However, this is no longer true for metric spaces with more than three points. This will follow from the result that the maximum of $\mathcal P(M)$ is attained at a metric space such that $B_{\mathcal F(M)}$ is a simplicial polytope. We will also prove that all triangle inequalities in $M$ have to be strict for $\mathcal P(M)$ to be a maximum.

We will begin with a discussion on the connection between graphs and the Lipschitz-free space of the associated metric space, and we will discuss known results concerning these spaces and the relationship between special graphs and metric spaces. In Section~\ref{sec:decom} we will analyze when it is possible to decompose $\mathcal F(M)$ as an $\ell_1$ or an $\ell_\infty$-sum and we will apply that to characterize the metric spaces $M$ such that $B_{\mathcal F(M)}$ is a Hanner polytope. In Section~\ref{sec:isometries}, we characterize the finite metric spaces such that the corresponding Lipschitz-free spaces are isometric. Section~\ref{sec:product} is dedicated to the connections of the structure of metric spaces and the corresponding volume product. In particular, we use the \emph{shadow movement} technique to show in Section~\ref{subsec:max} that the maximum of $\mathcal P(M)$ is attained at a metric space where all triangle inequalities are strict and $B_{\mathcal F(M)}$ is simplicial. In Section~\ref{subsec:complete} we compute the volume product corresponding to the complete graph with equal weights. Finally, in Section~\ref{sec:minimal} we focus on the metric spaces minimizing $\mathcal P(M)$.

\section{Relationship to graphs}\label{sec:graphs}

There is a correspondence between metric spaces and weighted undirected connected graphs. Indeed, to any weighted undirected connected graph $G=(V,E,w)$, with vertices $V$, edges $E$ and weight $w:E\to\RR_+$ one can associate a finite metric space on its set of vertices $V$ by using the shortest path distance. Reciprocally, to any finite metric space $(M,d)$, we can canonically associate a weighted undirected connected finite graph $G(M, d)$ as follows: we first consider the complete weighted graph on $M$ with the weight on the edge between two points being their distance. Then one erases the edge between two points $x,y$ if its weight is equal to the sum of the weights of the edges along a path joining the two points, i.e. if there is $z\neq x,y$ such that $d(x,y)=d(x,z)+d(z,y)$. We will say that $G(M, d)$ is the \emph{canonical graph associated with the metric space $(M,d)$}. We sometimes abuse notation and say that $M$ is a tree, a cycle, etc., meaning that $G(M,d)$ is a tree, a cycle, etc. 

The following lemma describes which weighted graphs are the canonical graph of some metric space, it also yields a bijection between those graphs and finite metric spaces. 

\begin{lemma}\label{lemma:charmetricgraphs} Let $G=(V,E,w)$ be a weighted connected graph. Then there is a metric space such that $G$ is the canonical graph associated with the metric if and only if the following holds:
	for every $\{x,y\}\in E$ and $x_1, \ldots, x_l\in V$ such that $x_1=x, x_l=y$ and $\{x_k, x_{k+1}\}\in E$ for every $k$, we have $0< w(x,y)< \sum_{k=1}^l w(x_k, x_{k+1})$.  
\end{lemma}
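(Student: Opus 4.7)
The plan is to prove the two implications separately. The forward direction follows quite directly from the removal rule that defines the canonical graph. For the reverse direction, I would define the candidate metric as the shortest-path distance on $G$ and verify that applying the canonical-graph construction to it recovers $G$.

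For the forward implication ($\Rightarrow$), suppose $G = G(M,d)$, so that $w(x,y) = d(x,y) > 0$ on every edge. Given an edge $\{x,y\}\in E$ and a walk $x = x_1, \ldots, x_l = y$ along edges of $G$ with $l \geq 3$, iterated triangle inequality gives
\[
d(x,y) \,\leq\, d(x, x_2) + d(x_2, y) \,\leq\, \sum_{k=1}^{l-1} w(x_k, x_{k+1}).
\]
If equality were to hold throughout, we would obtain $d(x,y) = d(x,x_2) + d(x_2, y)$ with $x_2 \neq x$ (because $\{x, x_2\}$ is an edge of positive weight); and a short argument using positivity of weights forces $x_2 \neq y$ as well (otherwise the tail $\sum_{k=2}^{l-1} w(x_k, x_{k+1})$ would vanish while consisting of positively weighted edges). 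Then $z = x_2$ would witness that the edge $\{x,y\}$ should have been removed when constructing $G(M,d)$, contradicting $\{x,y\} \in E$.

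For the reverse implication ($\Leftarrow$), let $d(x,y)$ denote the shortest-path distance on $G$ with respect to $w$. This is a genuine metric since $G$ is connected and the hypothesis forces all weights to be strictly positive. Moreover, the hypothesis immediately yields $d(x,y) = w(x,y)$ for every $\{x,y\} \in E$, because any walk through at least one intermediate vertex already has total weight strictly greater than $w(x,y)$. It remains to check that the canonical graph of $(V,d)$ coincides with $G$. If $\{x,y\} \in E$, then no $z \neq x, y$ can satisfy $d(x,y) = d(x,z) + d(z,y)$, for otherwise the concatenation of shortest paths through $z$ would give a walk from $x$ to $y$ of total weight $w(x,y)$ with $l\geq 3$ vertices, contradicting the hypothesis. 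Conversely, if $\{x,y\} \notin E$, any shortest $x$-$y$-path uses at least two edges, so it contains an intermediate vertex $z$ providing $d(x,y) = d(x,z) + d(z,y)$ and hence causing $\{x,y\}$ to be removed by the canonical construction.

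The main obstacle is the careful handling in the forward direction of walks that are not necessarily simple paths. The subtle point is ensuring that the candidate intermediate vertex $z = x_2$ is distinct from both endpoints before invoking the removal rule; this reduces to strict positivity of edge weights but must be stated explicitly. Everything else reduces to routine bookkeeping around the definitions of the canonical graph and the shortest-path metric.
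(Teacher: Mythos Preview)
Your proposal is correct and follows essentially the same approach as the paper: the forward direction via the triangle inequality together with the edge-removal rule defining $G(M,d)$, and the reverse direction by equipping $V$ with the shortest-path metric and verifying that the canonical-graph construction returns $G$. If anything, you are slightly more careful than the paper in the forward direction, where you explicitly argue that the candidate intermediate vertex $x_2$ is distinct from both $x$ and $y$ before invoking the removal rule.
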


\begin{proof}
	Assume first that there is a metric $d$ on $V$ such that $G=G(M,d)$. Then $ 0<d(x,y)=w(x,y)$ for any $\{x,y\}\in E$ and by the triangle inequality $w(x,y)\leq \sum_{k=1}^l w(x_k, x_{k+1})$ for all  $x_1, \ldots x_l$ such that $x_1=x, x_l=y$ and $\{x_k, x_{k+1}\}\in E$ for every $k$. Moreover, assume that  $w(x,y)= \sum_{k=1}^l w(x_k, x_{k+1})$. Then we also have $w(x,y)=w(x,x_2)+w(x_2,y)$ and so $\{x,y\}\notin E$, a contradiction.
	
	Conversely, assume that $G=(V,E)$ satisfies the condition in the statement. Define a metric on $V$ as the shortest path distance:
	\[d(x,y) = \min\left\{\sum_{k=1}^l w(x_k, x_{k+1}) :   x_1=x, x_l=y, \{x_k, x_{k+1}\}\in E \ \forall k\in\{1,\ldots, l\}\right\}\]
	Clearly $d$ is a metric on $V$. We should check that the graph $G'=(V,E')$ associated with $(V,d)$ is $G$. 
	
	Fix $x,y\in V$, $x\neq y$. Then $d(x,y) = \sum_{k=1}^l w(x_k, x_{k+1})$ for some $x_1,\ldots, x_l$ such that  $x_1=x$, $x_l=y$, and $\{x_k, x_{k+1}\}\in E$ for all $k$. Assume that $\{x,y\}\notin E$. Then $l\geq 3$. Moreover, we should have $d(x_2,y) = \sum_{k=2}^l w(x_k, x_{k+1})$ as otherwise we would have a shorter path for the distance between $x$ and $y$. Therefore, $d(x,y)=d(x,x_2)+d(x_2,y)$. So $\{x,y\}\notin  E'$.  
	On the other hand, assume that $\{x,y\}\in E\setminus E'$. Then there is $z\in V\setminus\{x,y\}$ such that $d(x,y)=d(x,z)+d(z,y)$. Take $x_1,\ldots, x_m, x_{m+1}, \ldots, x_l$ such that $x_1=x$, $x_m=z$, $x_l=y$, $d(x,z) = \sum_{k=1}^{m-1} w(x_k, x_{k+1})$, $d(z,y)=\sum_{k=m}^l w(x_k, x_{k+1})$ and $\{x_k, x_{k+1}\}\in E$ for all $k$. Then $d(x,y) = \sum_{k=1}^l w(x_k, x_{k+1})$. Moreover, $d(x,y)\leq w(x,y)$ by the definition of $d$. This is a contradiction with the hypothesis in the statement. Thus $E=E'$, and clearly the weight of the edges in $E'$ is $d(x,y)$. Therefore $G=G(M,d)$. 
	\end{proof}

This representation is very well adapted to our study because the edges that appear in the graph model are exactly corresponding to the vertices of the unit ball of the Lipschitz-free space. This was recently proved by Aliaga and Guirao \cite{AG} for compact metric spaces, and even more recently by Aliaga and Perneck\'a \cite{AP} for complete metric spaces. We give a simpler proof for finite sets below. We denote $\ext K$ the set of extreme points of a convex body $K$, these are precisely the vertices of $K$ when $K$ is a polytope. 

\begin{proposition}[\cite{AG}]\label{extreme}
	Let $(M,d)$ be a pointed finite metric space, with $M=\{a_0,\dots, a_n\}$. Let $G=(M,E,w)$ be the canonical associated with $(M,d)$. 
	The following are equivalent.
	\begin{enumerate}[(i)]
		\item $m_{i,j}\notin \ext B_{\mathcal F(M)}$. 
		\item There exists $k\notin\{i,j\}$ such that $d_{i,j}=d_{i,k}+d_{k,j}$.
		\item There exists $k\notin\{i,j\}$ such that $m_{i,j}$ belongs to the segment between $m_{i,k}$ and $m_{k,j}$.
	\end{enumerate}
\end{proposition}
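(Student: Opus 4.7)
The plan is to prove the cyclic implications $(iii)\Rightarrow(i)\Rightarrow(ii)\Rightarrow(iii)$.

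The implications $(ii)\Rightarrow(iii)$ and $(iii)\Rightarrow(i)$ are direct. For the first, the identity $d_{i,j}=d_{i,k}+d_{k,j}$ rewrites as
\[m_{i,j}=\frac{d_{i,k}}{d_{i,j}}\,m_{i,k}+\frac{d_{k,j}}{d_{i,j}}\,m_{k,j},\]
a convex combination with coefficients in $(0,1)$ summing to $1$. For the second, since $k\notin\{i,j\}$ the vectors $e_i-e_j$, $e_i-e_k$, $e_k-e_j$ are pairwise non-proportional in $\RR^n$, so the three molecules $m_{i,j},m_{i,k},m_{k,j}$ are genuinely distinct, and any representation of $m_{i,j}$ as a point of the segment $[m_{i,k},m_{k,j}]$ is automatically a non-trivial convex combination, ruling $m_{i,j}$ out of $\ext B_{\mathcal F(M)}$.

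The substantive step is $(i)\Rightarrow(ii)$. Given a non-trivial convex representation $m_{i,j}=\sum_{k\neq l}\lambda_{k,l}m_{k,l}$ with $\lambda_{k,l}\geq 0$, $\sum\lambda_{k,l}=1$, and some positive weight placed on an (unordered) pair different from $\{i,j\}$, I substitute $m_{k,l}=(e_k-e_l)/d_{k,l}$ and merge antiparallel contributions by setting $\xi_{k,l}:=(\lambda_{k,l}-\lambda_{l,k})/d_{k,l}$ for $k<l$, obtaining the flow identity
\[\frac{e_i-e_j}{d_{i,j}}=\sum_{k<l}\xi_{k,l}(e_k-e_l),\]
which I read as transporting mass $1/d_{i,j}$ from $a_j$ to $a_i$ on the weighted complete graph over $M$. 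The cost $C:=\sum_{k<l}|\xi_{k,l}|d_{k,l}$ satisfies $C\leq 1$ from $|\lambda_{k,l}-\lambda_{l,k}|\leq\lambda_{k,l}+\lambda_{l,k}$ combined with $\sum\lambda_{k,l}=1$. Pairing the displayed identity with the $1$-Lipschitz function $f(x)=d(x,a_j)-d(a_0,a_j)\in B_{\Lip_0(M)}$ gives the matching lower bound $1=(f(a_i)-f(a_j))/d_{i,j}\leq C$. Hence $C=1$, and the two resulting equality cases force (a) $\min(\lambda_{k,l},\lambda_{l,k})=0$ for every pair, and (b) $|f(a_k)-f(a_l)|=d_{k,l}$ on every edge supporting $\xi$.

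To conclude, I decompose the flow $\xi$ into directed paths from $a_j$ to $a_i$ together with cycles. The minimality $C=1$ eliminates the cycles and forces each path to have total length equal to $d_{i,j}$, that is, to be a geodesic of $M$ from $a_j$ to $a_i$. If every such path reduced to the direct edge $\{a_i,a_j\}$, then $\xi_{k,l}=0$ for $\{k,l\}\neq\{i,j\}$, and combined with (a) and $\sum\lambda=1$ this would force $\lambda_{i,j}=1$, contradicting non-triviality. Therefore at least one geodesic path contains some intermediate vertex $a_k\notin\{a_i,a_j\}$, and the triangle inequality applied along the path yields $d_{i,j}=d(a_i,a_k)+d(a_k,a_j)$, which is (ii). The main obstacle is this last move: the equalities coming from the single functional $f$ only constrain each active edge to lie on some geodesic ending at $a_j$; the global geodesic from $a_j$ all the way to $a_i$, and hence the metrically intermediate vertex $a_k$, emerges only through the flow decomposition pinned to the optimal cost $C=1$.
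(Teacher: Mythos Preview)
Your proof is correct, and the implications $(ii)\Rightarrow(iii)\Rightarrow(i)$ match the paper's. For the main implication $(i)\Rightarrow(ii)$, however, you take a genuinely different route.

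The paper argues combinatorially: it picks a \emph{minimal} subset $\gamma$ of molecules whose convex hull contains $m_{i,j}$, notes by Carath\'eodory that the $m_e$ for $e\in\gamma$ form the vertices of a simplex (hence are linearly independent), and then a degree-counting argument shows that the graph on the involved vertices with edge set $\gamma\cup\{(i,j)\}$ is $2$-regular and connected, i.e.\ a cycle. From $\sum_{e\in\gamma} d_e = d_{i,j}$ one reads off the intermediate point.

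Your argument is transport-theoretic: you rewrite the convex combination as a flow of net mass $1/d_{i,j}$ from $a_j$ to $a_i$, bound its cost $C=\sum|\xi_{k,l}|d_{k,l}$ above by $1$ from convexity and below by $1$ by pairing with the Kantorovich potential $f=d(\cdot,a_j)$, and then invoke a conformal path--cycle decomposition of the flow to conclude that every path in the decomposition is a geodesic from $a_j$ to $a_i$. The non-triviality hypothesis forces one such geodesic to pass through a third vertex. The one ingredient you leave implicit is that in a conformal decomposition the cost splits additively as $\sum_p w_p\,\mathrm{len}(p)+\sum_c w_c\,\mathrm{len}(c)$; this is standard and is what makes ``$C=1$ eliminates the cycles and forces geodesics'' rigorous.

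Both proofs are short. The paper's is more self-contained (pure linear algebra and counting), while yours makes explicit the optimal-transport interpretation of $\|\cdot\|_{\mathcal F(M)}$ as a minimum cost, which is natural in this context and in fact anticipates the identification $\mathcal F(M)=\ell_1(E)/\Ker\partial$ appearing later in the paper. A minor remark: the equality case (b) that you record from the potential $f$ is not actually needed once you have the flow decomposition; the whole conclusion follows from $C=1$ and (a).
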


\begin{proof}
	(ii)$\Rightarrow$(iii). Let $k\notin\{i,j\}$ such that $d_{i,j}=d_{i,k}+d_{k,j}$. Then one has
	\[
	m_{i,j}=\frac{e_i-e_j}{d_{i,j}}
	=\frac{d_{ik}}{d_{i,k}+d_{k,j}}\frac{e_i-e_k}{d_{i,k}}+\frac{d_{k,j}}{d_{ik}+d_{kj}}\frac{e_k-e_j}{d_{kj}}
	=\frac{d_{i,k}}{d_{i,k}+d_{k,j}}m_{i,k}+\frac{d_{k,j}}{d_{i,k}+d_{k,j}}m_{k,j}.
	\]
	Hence $m_{i,j}$ belongs to the segment between $m_{i,k}$, and $m_{k,j}$.\\
	(iii)$\Rightarrow$(i). Let $k\notin\{i,j\}$ such that $m_{i,j}$ belongs to the segment between $m_{i,k}$ and $m_{k,j}$. Since $i,j,k$ are distinct, it follows that $m_{i,j}\neq m_{i,k}$ and $m_{i,j}\neq m_{k,j}$ thus $m_{i,j}$ is not an extreme point of $B_{\mathcal F(M)}$.\\
	(i)$\Rightarrow$(ii). If $m_{i,j}$ is not an extreme point of $B_{\mathcal F(M)}$, then $m_{i,j}$ belongs to the relative interior of a face of $B_{\mathcal F(M)}$. Let $\EE=\{(a_k,a_l): 0\le k\neq l\le n\}$. For $e=(a_k,a_l)\in\EE$ we denote $d_e=d_{k,l}$ and $m_e=m_{k,l}$. 
	Let $\gamma\subset \EE$ be the subset of $\EE$ of smallest cardinality such that $m_{i,j}\in\conv(m_e:e\in\gamma)$. By Carath\'eodory's theorem, $2\le r\le n$, where $r=\card(\gamma)$. Then $S_\gamma:=\conv(m_e:e\in\gamma)$ is an $(r-1)$-dimensional simplex. There exists $(\lambda_e)_{e\in\gamma}$ such that $\lambda_e>0$, for all $e\in\gamma$, $\sum_{e\in\gamma}\lambda_e=1$ and 
	\begin{equation}\label{eq:ext}
	m_{i,j}=\frac{e_i-e_j}{d_{i,j}}=\sum_{e\in\gamma}\lambda_e m_e.
	\end{equation} 
	Note that if $(a_k,a_l)\in \gamma$ then $(a_l,a_k)\notin \gamma$. Indeed, otherwise by triangle inequality we will have 
	\[ 1 = \norm{m_{i,j}} \leq |\lambda_{(a_k, a_l)}-\lambda_{(a_l,a_k)}| + 1-\lambda_{(a_k, a_l)}-\lambda_{(a_l,a_k)}\]
	and so $\lambda_{(a_k,a_l)}=0$, contradicting the minimality of $\gamma$. 
	
	Let $M_\gamma=\{a_k\in M: \exists e\in\gamma; a_k\in e\}$ be the vertices of $M$ which belongs to some of the edges in $\gamma$. Let $\gamma'=\gamma\cup\{(i,j)\}$. We want to prove that the graph $C_\gamma:=(M_\gamma,\gamma')$ is a cycle. First, it is not difficult to see using the minimality of $\gamma$ that the graph $C_\gamma$ is connected. 
	Since $S_\gamma$ is an $(r-1)$-dimensional simplex whose affine hull doesn't contain the origin, its $r$ vertices are linearly independent. Since these vertices are differences of basis vectors, one has at least $r+1$ basis vectors involved in the vertices of $S_\gamma$. Thus, $\card(M_\gamma)\ge r+1$. By linear independence, each $a_k\in M_\gamma$ belongs to at least two edges. By double counting one has 
	$$2(r+1)=2\card(\gamma')=\sum_{a\in M_\gamma}\deg(a)
	\ge 2\card(M_\gamma)\ge 2(r+1).$$
	Hence $\card(M_\gamma)= r+1$ and each vertex of $C_\gamma$ has exactly degree two. Thus $C_\gamma$ is a two-regular connected graph: it is the $(r+1)$-cycle graph. Then it follows from \eqref{eq:ext}  $d_{i,j}=\sum_{e\in\gamma}d_e$. Since $r\ge2$ there exists $k\notin \{i,j\}$ such that $k\in e$ for some $e\in\gamma$. Then one has $d_{i,j}=d_{i,k}+d_{k,j}$.
\end{proof}

One deduces easily the following corollary.

\begin{corollary}\label{coro:extreme}
	Let $(M,d)$ be a pointed finite metric space, with $M=\{a_0,\dots, a_n\}$. Let $G=(M,E,w)$ be the canonical graph associated with $(M,d)$. 
	Let $0\le i\neq j\le n$. Then
	$m_{i,j}$ is an extreme point of $B_{\mathcal F(M)}$ if and only if $\{a_i,a_j\}\in E$.
	Moreover $B_{\mathcal F(M)}=\conv(m_{i,j}: \{a_i,a_j\}\in E)$ and $B_{\mathcal F(M)}$ has exactly $2\card(E)$ vertices.
\end{corollary}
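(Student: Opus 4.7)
The plan is to derive the corollary directly from Proposition~\ref{extreme} by matching its combinatorial criterion with the definition of the canonical graph $G(M,d)$.

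First, I would recall the construction of $G(M,d)$: an edge $\{a_i,a_j\}$ of the complete graph on $M$ is erased precisely when there exists some $k\notin\{i,j\}$ with $d_{i,j}=d_{i,k}+d_{k,j}$. Hence $\{a_i,a_j\}\notin E$ is, by definition, equivalent to condition (ii) of Proposition~\ref{extreme}. Combining this equivalence with the implication (ii)$\Leftrightarrow$(i) from that proposition yields: $m_{i,j}\in\ext B_{\mathcal F(M)}$ if and only if $\{a_i,a_j\}\in E$. This gives the first assertion.

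Next, for the identification of $B_{\mathcal F(M)}$ as a convex hull, I would use that $B_{\mathcal F(M)}=\conv(\Mol(M))$, and then apply the Minkowski theorem in finite dimensions: every polytope is the convex hull of its extreme points. Combining these, the non-extreme molecules are redundant in the convex hull, so $B_{\mathcal F(M)}=\conv(m_{i,j}:\{a_i,a_j\}\in E)$.

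Finally, for the count of vertices, I would observe that for each (unordered) edge $\{a_i,a_j\}\in E$ the corresponding extreme points are exactly the two molecules $m_{i,j}$ and $m_{j,i}=-m_{i,j}$. These are distinct (as $m_{i,j}\neq 0$), and molecules coming from different unordered edges are distinct since $m_{k,l}$ determines the pair $\{a_k,a_l\}$ via its support on the basis $e_1,\dots,e_n$ (using $e_0=0$). Therefore, the total number of vertices is $2\card(E)$. None of these steps should present any real obstacle, as Proposition~\ref{extreme} already contains all the difficulty.
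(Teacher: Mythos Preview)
Your proposal is correct and matches the paper's intent: the paper gives no explicit proof of this corollary, merely stating that one ``deduces easily'' it from Proposition~\ref{extreme}, and your argument spells out exactly that deduction. The three steps you outline (identifying condition~(ii) with the definition of the canonical graph, using that a polytope is the convex hull of its vertices, and checking injectivity of the map $\{a_i,a_j\}\mapsto\{m_{i,j},m_{j,i}\}$ via supports) are precisely what is needed and present no difficulty.
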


Indeed, one can go a bit further and relate the number of points in the metric segment (or geodesic)
\[[i,j] = \{k\in \{0, \ldots,n\} : d_{i,j} = d_{i,k}+d_{k,j}\}\]
with the dimension of the face $F$ of $B_{\mathcal F(M)}$ such that $m_{i,j}$ is in the relative interior of $F$. The Proposition~\ref{extreme} says that the dimension of $F$ is $0$ precisely if $[i,j]=\{i,j\}$. In general, we have the following.

\begin{theorem}\label{theor:faces} Let $(M,d)$ be a pointed finite metric space, with $M=\{a_0,\dots, a_n\}$. Given $i\neq j\in\{0,\ldots, n\}$, let $F$ be the face of $B_{\mathcal F(M)}$ such that $m_{i,j}$ belongs to the relative interior of $F$. Then 
	\begin{itemize}
		\item[a)] $F=\conv\{m_{u,v}: d_{i,j}= d_{i,u}+d_{u,v}+d_{v,j}\}$, 
		\item[b)] the dimension of $F$ coincides with the number of points in $[i,j]\setminus\{i,j\}$.  
\end{itemize}
\end{theorem}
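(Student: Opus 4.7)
The plan is to describe $F$ via its dual face $F^{\ast}=\{f\in B_{\Lip_0(M)}:f(m_{i,j})=1\}$ inside the polar body $B_{\Lip_0(M)}=B_{\FF(M)}^{\circ}$. Recall that $F=\{x\in B_{\FF(M)}:f(x)=1\text{ for all }f\in F^{\ast}\}$ and, for a symmetric polytope in $\RR^n$ with $0$ in its interior, $\dim F+\dim F^{\ast}=n-1$; so both parts of the theorem will come out of a description of which Lipschitz constraints are active on the whole of $F^{\ast}$. Set $S=\{(u,v):u\neq v,\ d_{i,j}=d_{i,u}+d_{u,v}+d_{v,j}\}$.

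The first step toward (a) is the inclusion $\conv\{m_{u,v}:(u,v)\in S\}\subset F$, obtained by a chain of triangle inequalities: for any $f\in F^{\ast}$,
\[d_{i,j}=f(a_i)-f(a_j)=[f(a_i)-f(a_u)]+[f(a_u)-f(a_v)]+[f(a_v)-f(a_j)]\le d_{i,u}+d_{u,v}+d_{v,j}=d_{i,j},\]
which forces equality in each summand, hence $f(m_{u,v})=1$ and $m_{u,v}\in F$. For the reverse inclusion I would use that $\ext F=F\cap\ext B_{\FF(M)}$ (a general fact for faces of convex bodies), so by Corollary~\ref{coro:extreme} it suffices to show $m_{u,v}\notin F$ whenever $(u,v)\notin S$. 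This I would prove by constructing an explicit witness $f\in F^{\ast}$ with $f(m_{u,v})<1$: put $f(a_i)=d_{i,j}$, $f(a_j)=0$, $f(a_u)=d_{i,j}-d_{i,u}$, and set $f(a_v)=d_{j,v}$ when $d_{i,j}-d_{i,u}-d_{j,v}\ge -d_{u,v}$, else $f(a_v)=d_{i,j}-d_{i,u}+d_{u,v}$. In both cases the hypothesis $d_{i,j}<d_{i,u}+d_{u,v}+d_{v,j}$ yields $f(a_u)-f(a_v)<d_{u,v}$; Lipschitz feasibility of this assignment on $\{a_i,a_j,a_u,a_v\}$ follows from a short list of triangle inequalities, and McShane's theorem extends $f$ to all of $M$ (with a final shift to enforce $f(a_0)=0$).

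For (b), the part (a) analysis identifies the constraints of $B_{\Lip_0(M)}$ active on the entire face $F^{\ast}$ as exactly the pairs $(p,q)\in S$, so $\dim F^{\ast}=n-\operatorname{rank}\{e_p-e_q:(p,q)\in S\}$ in $\RR^n$ with the convention $e_0=0$. Viewing $S$ as an undirected graph on $[i,j]$, it is connected because $(i,k)\in S$ for every $k\in[i,j]$; a spanning tree then yields $|[i,j]|-1$ linearly independent vectors $e_p-e_q$, whence $\dim F^{\ast}=n-|[i,j]|+1$ and $\dim F=(n-1)-\dim F^{\ast}=|[i,j]|-2=|[i,j]\setminus\{i,j\}|$. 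The main obstacle is the construction of the witness in the reverse inclusion of (a): the naive choice $(f(a_u),f(a_v))=(d_{i,j}-d_{i,u},d_{j,v})$ need not satisfy $|f(a_u)-f(a_v)|\le d_{u,v}$, which forces the case split above and careful verification of several triangle inequalities to ensure feasibility of the $4$-point assignment.
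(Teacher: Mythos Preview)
Your proof is correct and takes a genuinely different route from the paper's. For the reverse inclusion in (a), the paper avoids any case analysis by testing membership in $F$ against two fixed global $1$-Lipschitz functions: $f(t)=d(t,a_j)$, which forces $d_{u,j}=d_{u,v}+d_{v,j}$, and the Ivakhno--Kadets--Werner function $g(t)=\tfrac{d_{i,j}}{2}\frac{d(t,a_j)-d(t,a_i)}{d(t,a_j)+d(t,a_i)}$, which forces $u\in[i,j]$; combining the two gives $(u,v)\in S$ directly. Your ad-hoc four-point witness plus McShane extension is more elementary---it does not require knowing the somewhat miraculous function $g$---at the cost of the two-case verification you flag. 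For (b), the paper works on the primal side: it shows $\dim F=\dim\operatorname{span}\{m_{u,v}\in F:v\neq j\}$ and then uses Lemma~\ref{lemma:basis} (spanning trees give bases of molecules) to count. Your argument is the polar version of the same idea: you compute $\dim F^{\ast}$ as $n$ minus the rank of the active constraint vectors $\{e_p-e_q:(p,q)\in S\}$ and invoke $\dim F+\dim F^{\ast}=n-1$. Both collapse to the same spanning-tree rank computation on the graph with vertex set $[i,j]$; your dual formulation is arguably cleaner because the identification of the active constraints on $F^{\ast}$ falls out immediately from part (a), whereas the paper has to do a small reduction (dropping the molecules with $v=j$) before the linear-independence count becomes transparent.
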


As a consequence, $m_{i,j}$ belongs to the interior of a facet of $B_{\mathcal F(M)}$ if and only if $k\in [i,j]$ for all $k\in \{0,\ldots,n\}$. That is a particular case of the study of points of differentiability of the norm of $B_{\mathcal F(M)}$ done in \cite[Theorem~4.3]{PR} for uniformly discrete metric spaces.  

To prove Theorem~\ref{theor:faces}, we will need to check when a set of molecules is linearly independent. The following lemma does the work. 

\begin{lemma}\label{lemma:basis} Let $(M,d)$ be a metric space with $n+1$ elements and $G(M,d)=(V,E,w)$ be its canonical graph. Let $E'\subset E$. Then $\{m_{\gamma}: \gamma \in E'\}$ is a basis of $\mathbb R^n$ if and only if the subgraph with edges $E'$ is a spanning tree of $G(M,d)$.
\end{lemma}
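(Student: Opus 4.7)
\medskip

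\noindent\textbf{Proof plan.}
The plan is to reduce the statement to a classical fact about the incidence structure of a graph. First, since each molecule $m_\gamma = (e_i - e_j)/d_{i,j}$ is a nonzero scalar multiple of $e_i-e_j$, the set $\{m_\gamma : \gamma \in E'\}$ is linearly independent in $\mathbb R^n$ if and only if $\{e_i - e_j : \{i,j\}\in E'\}$ is. Moreover, a basis of $\mathbb R^n$ must have exactly $n$ elements, so it suffices to prove the following equivalence: $\{e_i - e_j : \{i,j\}\in E'\}$ is linearly independent in $\mathbb R^n$ (with the convention $e_0=0$) if and only if the subgraph $(V,E')$ is acyclic. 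Combined with the fact that a forest on $n+1$ vertices has exactly $n$ edges precisely when it is a spanning tree, this yields the lemma.

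The easy direction is the ``only if'' part. If $(V,E')$ contains a cycle $i_0 - i_1 - \cdots - i_k = i_0$, then the telescoping identity
\[
\sum_{\ell=1}^{k} (e_{i_\ell} - e_{i_{\ell-1}}) = 0
\]
produces a nontrivial linear dependence among the vectors $e_i - e_j$ indexed by the cycle edges, hence a dependence among the corresponding molecules (after rescaling by the $d_{i,j}$'s).

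For the converse, I would proceed by induction on $|E'|$. The case $E'=\emptyset$ is vacuous. If $(V,E')$ is a nonempty forest, then some connected component contains at least two vertices, hence at least two leaves (this is the standard fact that any tree on $\ge 2$ vertices has at least two vertices of degree one). In particular, we can always choose a leaf $v$ of $(V,E')$ with $v\neq a_0$. Let $\{v,u\}$ be the unique edge of $E'$ incident to $v$. In any linear combination $\sum_{\{i,j\}\in E'}\lambda_{\{i,j\}}(e_i-e_j)=0$, the basis vector $e_v$ appears only through the term $\lambda_{\{v,u\}}(e_v-e_u)$, because no other edge of $E'$ is incident to $v$, and since $v\ne 0$ the vector $e_v$ is not collapsed by the convention $e_0=0$. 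Reading off the $v$-th coordinate forces $\lambda_{\{v,u\}}=0$, and removing that edge puts us in the inductive situation.

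Finally, I would conclude by counting: $\{m_\gamma\}_{\gamma\in E'}$ is a basis of $\mathbb R^n$ iff $|E'|=n$ and the family is linearly independent, iff $(V,E')$ is a forest with $n$ edges on $n+1$ vertices, iff $(V,E')$ is connected and acyclic, i.e.\ a spanning tree of $G(M,d)$. The only mildly delicate step is the need to select a leaf different from the distinguished point $a_0$ in the inductive argument, since $e_0=0$ breaks the ``unique coordinate'' trick if applied at $a_0$; but the two-leaf property of nontrivial trees removes this obstacle.
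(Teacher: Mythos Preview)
Your proof is correct. The approach differs from the paper's in the key step: for ``spanning tree $\Rightarrow$ basis,'' the paper invokes the vertex--edge incidence matrix $Q(G')$ and cites the classical fact (from Bapat's book) that its rank equals $n$ when $G'$ is connected, whereas you give a direct elementary induction by stripping off a leaf $v\neq a_0$ at each step and reading off the $v$-th coordinate. For the converse, the paper argues separately that $|E'|=n$, that cycles produce dependences (same as you), and that disconnectedness would yield a nonzero Lipschitz function annihilating all the molecules; you instead prove the cleaner equivalence ``acyclic $\Leftrightarrow$ independent'' and deduce connectedness by the counting identity that a forest on $n+1$ vertices with $n$ edges is a tree. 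Your argument is fully self-contained and avoids the external reference, at the modest cost of the leaf-removal induction; the paper's route is shorter once one grants the cited rank theorem.
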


\begin{proof}
	Note that $\operatorname{span}\{m_{\gamma}:\gamma\in E'\}=\operatorname{span}\{\frac{e_{i}-e_j}{d_{i,j}}: \{a_i,a_j\}\in E'\} = \operatorname{span}\{e_i-e_j:\{a_i,a_j\}\in E'\}$. Thus we may assume that $d_{i,j}=1$ for all $\{a_i,a_j\}\in E$.  Let $G'=(V, E')$. Assume first that $G'$ is a spanning tree of $G$. Then $\# E' = n$. Consider the \emph{(vertex-edge) incidence matrix} of $G$, $Q(G')$, which is defined as follows. We consider that every edge is assigned an orientation, which is arbitrary but fixed. The rows and the columns of $Q(G')$ are indexed by $V'$ and $E'$, respectively. The $(i, j )$-entry of $Q(G')$ is $0$ if vertex $i$ and edge $\gamma_j$ are not incident, and otherwise it is $1$ or $-1$ according as $\gamma_j$ originates or terminates at $i$, respectively.   The rank of $Q(G')$ is $n$ since $G'$ is connected (see e.g. Theorem~2.3 in \cite{Ba}). Thus, the $n$ molecules $\{m_e : e \in E'\}$ are linearly independent and so they are a basis of $\mathbb R^n$.
	
	Conversely, assume that $\{m_e : e \in E'\}$ is a basis. Then $\# E'=n$. Moreover, any cycle on $G'$ would provide a linear dependence relation among the molecules $\{m_e: e\in E'\}$. Thus, $G'$ does not contain any cycle. Finally, $G'$ is connected, since otherwise there would be a non-zero Lipschitz function vanishing on $\{m_e:e\in E'\}$. This shows that $G'$ is a spanning tree of $G$.     
\end{proof}

\begin{proof}[Proof of Theorem~\ref{theor:faces}]
	Let $u\neq v$ be such that $d_{i,j}= d_{i,u}+d_{u,v}+d_{v,j}$. It is easy to check that every Lipschitz function attaining its norm on $m_{i,j}$ also attains its norm on $m_{u,v}$. Therefore, $m_{u,v}$ belongs to the intersection of all the exposed faces of $B_{\mathcal F(M)}$ containing $m_{ij}$. That intersection is precisely $F$.
	Conversely, assume that $m_{u,v}\in F$. Consider the $1$-Lipschitz function $f(t):=d(t,a_j)$. Then $\<f, m_{i,j}\>=1$ and so 
	\[m_{u,v}\in F\subset\{\mu\in B_{\mathcal F(M)}: \<f, \mu\> =1\}.\]
	That is, $\<f, m_{u,v}\>=1$. So $d_{u,j}-d_{v,j}=d_{u,v}$. 
	Now, consider the function $g(t):=\frac{d_{i,j}}{2} \frac{d(t,a_j)-d(t,a_i)}{d(t,a_j)+d(t, a_i)}$. This is a 1-Lipschitz function considered first in \cite{IKW2} that turns out to be very useful when studying the extremal structure of free spaces (see e.g. \cite{GPR}), since it only peaks at points in the segment $[i,j]$. As before, we have $\<g, m_{i,j}\>=1$ and so  
	\[m_{u,v}\in F\subset\{\mu\in B_{\mathcal F(M)}: \<g, \mu\> =1\}.\]
	That is, $\<g, m_{u,v}\>=1$. This implies that $d_{i, j}=d_{i, u}+d_{u,j}$. Then
	\[d_{i,j} = d_{i, u}+d_{u,j}= d_{i,u}+d_{u,v}+d_{v,j}.\]
	It follows that
	\[F = \conv\{m_{u,v}: m_{u,v}\in F\} =\conv\{m_{u,v}: d_{i,j}= d_{i,u}+d_{u,v}+d_{v,j}\}. \]
	
	In order to prove b), let $k=\dim F$ and note that $k=\dim\operatorname{span}\{m_{u,v}-m_{i,j}: m_{u,v}\in F\}$. 	 We claim that we only need to consider the vectors $m_{u,v}-m_{i,j}$ where $v\neq j$.  Indeed, it also follows from a) that if $m_{u,v}\in F$ then $m_{i,u}, m_{v,j}\in F$. Note also that
	\[ d_{i, u}(m_{i,u}-m_{x,y})+ d_{u,v}(m_{u,v}-m_{x,y})+d_{v,j}(m_{v,j}-m_{i,j})=0.\]
	Thus, $\dim F=\dim\operatorname{span}\{m_{u,v}-m_{i,j}: m_{u,v}\in F, v\neq j\}$. Since the vector $m_{i,j}$ does not belong to $\operatorname{span}\{m_{u,v}: m_{u,v}\in F, v\neq j\}$, we have that 
	$k= \dim\operatorname{span} A$ 
	where 
	\[A=\{m_{u,v}\in F :  v\neq j\}=\{m_{u,v}:  d_{i,j}= d_{i,u}+d_{u,v}+d_{v,j}, v\neq j\}.\]
	Now, let $G$ be the canonical graph associated with $M$. We claim that $A$ contains $\#([i,j]\setminus\{i,j\})$ linearly independent vectors. Indeed, consider $V' = [i,j]\setminus\{j\}$. For any $v\in V'$, there is a path in $G$ connecting $x$ and $v$, note also that $m_{u,v}\in A$ for any $u$ on that path. Let $G'=(V', E')$ be the subgraph of $G$ obtaining by joining together those paths, and let $G''=(V', E'')$ be a spanning tree of $G'$. Note that $E''$ contains $\#([i,j]\setminus\{i,j\})$ edges. Then  $m_{u,v}\in A$ for any $(u,v)\in E''$ and Lemma \ref{lemma:basis}  tells us that the set $\{m_{u,v}\}_{(u,v)\in E''}$ is linearly independent.  
	
	Finally, given a linearly independent set of molecules $\{m_{u,v} : \{u,v\}\in B\}\subset A$, we have that the graph with edges given by the set $B$ does not contain any cycle. Since its nodes belong to the set $[i,j]\setminus\{j\}$, the cardinality of $B$ is at most that of $[i,j]\setminus\{i,j\}$. This shows that there are at most $\#([i,j]\setminus\{i,j\})$ linearly independent vectors in $A$. 	
\end{proof}

The following nice relationship between trees and affine images of $B_1^n$ was first proved by A. Godard in a more general setting in section~4 of \cite{Go}. We give here a simpler proof in the case of finite metric spaces.

\begin{proposition}[\cite{Go}]\label{prop:tree}
	Let $(M,d)$ be a pointed finite metric space, with $M=\{a_0,\dots, a_n\}$. Let $G=(M,E,w)$ be the canonical weighted undirected connected finite graph associated with $(M,d)$. 
	Then $G$ is a tree if and only if $B_{\mathcal F(M)}$ is the linear image of $B_1^n$.
\end{proposition}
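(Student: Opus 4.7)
The plan is to translate both sides of the equivalence into the same statement about the number of edges of the canonical graph $G$, using the vertex description from Corollary~\ref{coro:extreme} and the linear-independence criterion from Lemma~\ref{lemma:basis}. A preliminary observation is that $B_{\mathcal F(M)}$ is always an $n$-dimensional centrally symmetric polytope in $\mathbb{R}^n$: central symmetry follows from $m_{i,j}=-m_{j,i}$, and full-dimensionality from the fact that $m_{0,1},\ldots,m_{0,n}$ are linearly independent. Consequently, if $B_{\mathcal F(M)}=T(B_1^n)$ for some linear map $T$, then $T$ must be invertible, and
\[
T(B_1^n)=\conv(\pm Te_1,\ldots,\pm Te_n)
\]
has exactly $2n$ vertices, arranged in $n$ antipodal pairs of linearly independent vectors.

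For the forward direction I assume $G=(M,E,w)$ is a tree, so that $|E|=n$. Lemma~\ref{lemma:basis}, applied to the spanning tree $E$, gives that $\{m_\gamma\}_{\gamma\in E}$ is a basis of $\mathbb{R}^n$, and Corollary~\ref{coro:extreme} gives that the vertex set of $B_{\mathcal F(M)}$ equals $\{\pm m_\gamma:\gamma\in E\}$. Picking an enumeration $E=\{\gamma_1,\ldots,\gamma_n\}$ and defining $T:\mathbb{R}^n\to\mathbb{R}^n$ by $Te_k=m_{\gamma_k}$ yields an invertible linear map with $T(B_1^n)=\conv(\pm m_{\gamma_1},\ldots,\pm m_{\gamma_n})=B_{\mathcal F(M)}$.

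For the converse, if $B_{\mathcal F(M)}$ is a linear image of $B_1^n$, the opening paragraph forces it to have exactly $2n$ vertices; Corollary~\ref{coro:extreme} says that this number is also $2|E|$, so $|E|=n$. Since $G$ is connected on $n+1$ vertices with $n$ edges, it is a tree. I do not anticipate any real obstacle: the only mild point to verify is that a linear map sending $B_1^n$ onto a full-dimensional body must be invertible, which is what secures the clean vertex count; all remaining ingredients (vertex identification and the spanning-tree linear-independence criterion) are already prepared in the preceding results.
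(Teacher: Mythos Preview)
Your proof is correct and follows essentially the same route as the paper: both arguments reduce the equivalence to the edge count $|E|=n$ via Corollary~\ref{coro:extreme}, together with the observation that a full-dimensional centrally symmetric polytope in $\mathbb{R}^n$ is a linear image of $B_1^n$ if and only if it has exactly $2n$ vertices. Your version is slightly more explicit in the forward direction, invoking Lemma~\ref{lemma:basis} to justify that the $n$ molecules coming from a tree form a basis, whereas the paper absorbs this into the general fact about centrally symmetric polytopes with $2n$ vertices.
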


\begin{proof}
	Recall that the graph $G$ is a tree if and only it is connected and acyclic, equivalently it is connected and $\card(E)=\card(M)-1=n$. Since our graphs are all connected and using Corollary \ref{coro:extreme}, we get that $G$ is a tree if and only if $B_{\mathcal F(M)}$ has $2n$ vertices. But we know that $B_{\mathcal F(M)}$ is full dimensional and is centrally symmetric so it has exactly $2n$ vertices if and only if it is an affine image of $B_1^n$. 
\end{proof}

Given a metric space $M$ and a subset $N$ endowed with the inherited metric so that $a_0\in N$, from $B_{\mathcal F(M)} =\conv\{m_{i,j}: i,j\in M, i\neq j\}$ it is clear that $\mathcal F(N)$ is a subspace of $\mathcal F(M)$ and $B_{\mathcal F(N)}$ is a section of $B_{\mathcal F(M)}$, this will be useful later. The fact that $a_0\in N$ is not important since the Lipschitz-free operation of a metric space $(M,d)$ with two different chosen roots gives two Banach spaces which are isometric can be seen directly in the following way: Assume that $M=\{a_1,\dots, a_{n+1}\}$. Define 
$$K=\conv\left(\left\{\frac{e_i-e_j}{d_{i,j}}: 1\le i\neq j\le n+1\right\}\right)\subset \left\{x\in\RR^{n+1}: \sum_{i=1}^{n+1} x_i=0\right\}.$$
Then $K$ is an $n$-dimensional convex body living in a hyperplane of $\RR^{n+1}$. For $1\le i\le n+1$, denote by $B_{\mathcal F_{a_i}(M)}$ the unit ball  of the Lipschitz-free space $\FF_{a_i}(M)$ pointed at $a_i$. Then $B_{\mathcal F_{a_i}(M)}=P_{e_i^\bot}(K)$ is the orthogonal projection of $K$ on $e_i^\bot$ and this projection is in fact bijective from $\{x\in\RR^{n+1}: \sum_{i=1}^{n+1} x_i=0\}$ onto $\{x\in\RR^{n+1}: x_i=0\}$. Thus for different $i$ and $j$, $B_{\mathcal F_{a_i}(M)}$ and $B_{\mathcal F_{a_j}(M)}$ are affine images of each other. It is also clear from this point that $B_{\mathcal F(N)}$ is (isometric to) a section of $B_{\mathcal F(M)}$ is $N\subset M$. It is interesting to note that there is a lot of literature dedicated to the above $K$, called the \emph{fundamental polytope of the metric space} and also the \emph{Kantorovich-Rubinstein polytope}. It was studied in combinatorics starting from the paper by Vershik \cite{Ve}, see e.g. \cite{GP}. In the particular case of unweighted graphs, it is called the \emph{symmetric edge polytope}, see \cite{DDM} and references therein. 

\section{Decompositions of Lipschitz-free spaces}\label{sec:decom}

\subsection{$\ell_1$-decompositions.} It is interesting to note that  the Lipschitz-free Banach space associated with the series composition of two graphs is the $\ell_1$-sum of their Lipschitz-free Banach spaces, in particular, its unit ball is the convex hull of the two unit balls. 

\begin{definition} Let $(M_i,d_i)$, $i=1,2$, be pointed metric spaces. We denote $M_1\diamond M_2$ the metric space obtained by connecting the graph representations of $M_1$ and $M_2$ by identification of their distinguished points. 
\end{definition}

Note that the definition of $M_1\diamond M_2$ actually depends on the choosen roots. However, we prefer to avoid the cumbersome notation $(M_1,0_1)\diamond (M_2,0_2)$. 

If $M_i$ has $n_i+1$ points, we get that 
$B_{\mathcal F(M_1\diamond M_2)}=\conv(B_{\mathcal F(M_1)}\times\{0\}, \{0\}\times B_{\mathcal F(M_2)})\subset \mathbb R^{n_1+n_2}$ and so
\begin{equation}\label{eq:diamond} \mathcal P(M_1\diamond M_2) = \frac{n_1!n_2!}{n!}\mathcal P(M_1)\mathcal P(M_2),
\end{equation} 
this fact will be very useful when studying the extremal values of the volume product. 

Given a finite metric space $M$, we will say that $M$ is \emph{decomposable} if we can find $M_1$ and $M_2$ such that $M=M_1\diamond M_2$. Otherwise we say that $M$ is \emph{indecomposable}. The decomposability of the metric space is closely related to the biconnectedness of its canonical graph. A connected graph $G$ is called \emph{biconnected} if for any vertex $v$ of $G$ the subgraph obtained by removing $v$ is still connected. A \emph{biconnected component} of $G$ is a maximal biconnected subgraph. Any connected graph decomposes into a tree of biconnected components. 

Note that we can write $M=M_1\diamond M_2$ if and only if the canonical graph associated with $M$ is not biconnected. Moreover, in such a case $M_1$ and $M_2$ are the union of biconnected components of $M$. As a consequence, we have the following: 

\begin{proposition}
	Let $M$ be a finite pointed metric space. Then $M$ can be decomposed as $M_1\diamond \ldots \diamond M_r$, where the metric spaces $M_i$ are indecomposable. This decomposition is unique, up to the order of the metric spaces.   
\end{proposition}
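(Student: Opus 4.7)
The plan is to prove existence by induction on $\card(M)$ and uniqueness by identifying the indecomposable factors with an intrinsic graph-theoretic invariant, namely the connected components of $G(M,d) \setminus \{a_0\}$.

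For existence, I would induct on $\card(M)$. The base case $\card(M) = 1$ is trivial. For the inductive step, if $M$ is indecomposable take $r = 1$; otherwise write $M = M_1 \diamond M_2$ with both factors of strictly smaller cardinality, apply the induction hypothesis to each, and concatenate via the associativity of $\diamond$ (both associations correspond to gluing several rooted graphs at the common root $a_0$).

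For uniqueness, the key structural claim is that whenever $M = M_1 \diamond \cdots \diamond M_r$, the canonical graph $G(M,d)$ is the wedge sum of the $G(M_i, d_i)$ at $a_0$. This rests on two observations: any geodesic in $G(M,d)$ between two points of the same $M_i$ stays inside $M_i$, since any detour through another factor must cross $a_0$ at least twice and is therefore strictly longer; and no edge between distinct $M_i$ and $M_j$ is canonical in $G(M,d)$, because it is already witnessed as redundant by the path through $a_0$. Given this wedge description, deleting $a_0$ yields the disjoint union $\bigsqcup_i (G(M_i,d_i) \setminus \{a_0\})$. Indecomposability of each $M_i$ is precisely the statement that $a_0$ is not a cut vertex of $G(M_i,d_i)$, so each summand is connected (or empty when $\card(M_i) = 1$). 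Therefore the family $\{M_i \setminus \{a_0\}\}_{i=1}^r$ is exactly the set of connected components of $G(M,d) \setminus \{a_0\}$, which depends only on $(M, d, a_0)$. Uniqueness follows, since any two decompositions into indecomposables must yield the same collection, up to relabelling. The main technical obstacle is verifying the wedge description of $G(M,d)$, in particular that the canonical graphs of the $M_i$ embed faithfully into $G(M,d)$ without introducing new or eliminating existing canonical edges; once this is settled, both existence and uniqueness reduce to elementary graph-theoretic bookkeeping.
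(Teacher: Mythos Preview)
Your argument rests on the assumption that every $\diamond$-gluing in a decomposition $M = M_1 \diamond \cdots \diamond M_r$ takes place at the fixed root $a_0$, so that $G(M,d)$ is a wedge of the $G(M_i,d_i)$ at that single point. That is not how the paper uses $\diamond$: each application chooses fresh distinguished points for its two arguments, so successive gluings may occur at different cut vertices. Accordingly, the paper states just before the proposition that $M$ is decomposable if and only if $G(M,d)$ is \emph{not biconnected}, i.e.\ has \emph{some} cut vertex, and that the indecomposable pieces are exactly the biconnected components (blocks) of $G(M,d)$---not the connected components of $G(M,d)\setminus\{a_0\}$.

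This difference is not cosmetic. Take the path $a_0\,\text{--}\,a_1\,\text{--}\,a_2$. Here $a_0$ is a leaf, so it is not a cut vertex and $G(M,d)\setminus\{a_0\}$ is connected; by your criterion $M$ would be indecomposable. But $a_1$ \emph{is} a cut vertex, so in the paper's sense $M = \{a_0,a_1\}\diamond\{a_1,a_2\}$ with the gluing at $a_1$, and the two edges are the biconnected components. Thus your characterisation ``indecomposability of $M_i$ is precisely the statement that $a_0$ is not a cut vertex of $G(M_i,d_i)$'' is false, and the invariant you extract (components of $G\setminus\{a_0\}$) does not recover the factors. The paper's route is simply to invoke the standard graph-theoretic fact that the block decomposition of a connected graph into its biconnected components exists and is unique; existence and uniqueness of the $\diamond$-decomposition then follow immediately.
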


It makes sense to wonder if that is the unique way of decomposing a Lipschitz-free space as an $\ell_1$-sum. It turns that this is the case.

\begin{theorem} \label{th:1decomp}
	Let $M=\{a_0,\ldots, a_n\}$ be a finite pointed metric space. Assume that $\mathcal F(M)$ is isometric to $X_1\oplus_1 X_2$. Then there exist metric spaces $M_1, M_2$ (which are obtained from the decomposition of $M$ into its biconnected components) such that $X_k$ is isometric to $\mathcal F(M_k)$.
\end{theorem}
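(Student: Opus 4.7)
The plan is to read off a partition of the biconnected components of $G(M,d)$ from the $\ell_1$-decomposition by combining the extreme point structure of $\ell_1$-sums with cycle relations, and then to identify the resulting factors with Lipschitz-free spaces via iterated $\diamond$-products.

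\smallskip

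First, let $Z_1, Z_2\subset \mathcal F(M)$ be the subspaces corresponding to $X_1$ and $X_2$ under the given isometry. The extreme points of the unit ball of an $\ell_1$-sum $Z_1\oplus_1 Z_2$ are precisely those of the form $(x,0)$ or $(0,y)$ with $x\in\ext B_{Z_1}$, $y\in\ext B_{Z_2}$; combined with Corollary \ref{coro:extreme}, which identifies the extreme points of $B_{\mathcal F(M)}$ with the molecules $\pm m_{i,j}$ corresponding to edges of the canonical graph, each edge $\{a_i,a_j\}$ of $G(M,d)$ is labeled according to whether $m_{i,j}\in Z_1$ or $m_{i,j}\in Z_2$. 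This produces a partition $E(G) = E_1\sqcup E_2$, and by finite-dimensionality $Z_k = \spn\{m_e : e\in E_k\}$.

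\smallskip

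The key step is to show that this partition is constant on each biconnected component of $G(M,d)$; this is the main obstacle. For any simple cycle $v_0, v_1, \ldots, v_l = v_0$ in $G(M,d)$, the telescoping identity $\sum_{r=1}^{l}(e_{v_{r-1}}-e_{v_r}) = 0$ in $\RR^n$ rewrites as $\sum_r d_{v_{r-1},v_r}\, m_{v_{r-1},v_r} = 0$. Splitting this sum by the partition $E_1\sqcup E_2$ gives $S_1+S_2=0$ with $S_k\in Z_k$, and the direct-sum condition $Z_1\cap Z_2 = \{0\}$ forces $S_1 = S_2 = 0$. Inspecting the coefficient of each $e_v$ with $v\neq a_0$ in $S_k$ then shows that $E_k$-membership of consecutive cycle edges must agree, so all edges of the cycle belong to one $E_k$; the case when $a_0$ lies on the cycle is handled by cyclic reparametrization placing $a_0$ at the endpoint $v_0 = v_l$, whose coordinate does not appear. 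Since any two edges in the same biconnected component lie on a common simple cycle (or the component is a single bridge), each biconnected component of $G(M,d)$ is mono-colored, yielding a partition $I_1\sqcup I_2$ of the biconnected components. Notably, this combinatorial step uses only the linear condition $Z_1\cap Z_2=\{0\}$, not the full $\ell_1$-norm identity.

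\smallskip

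Finally, let $M_k$ be the metric space obtained by iterating $\diamond$-products of the components in $I_k$, joined at their shared cut vertices (re-rooting as needed, legitimate by the root-invariance noted at the end of Section \ref{sec:graphs}). Iterating the identity $\mathcal F(N_1\diamond N_2)\cong \mathcal F(N_1)\oplus_1 \mathcal F(N_2)$ recalled at the beginning of this section along the block-cut tree of $G(M,d)$ gives $\mathcal F(M)\cong \mathcal F(B_1)\oplus_1\cdots\oplus_1\mathcal F(B_r)$, where $B_1,\ldots,B_r$ are the biconnected components. Regrouping by $I_1\sqcup I_2$ yields $\mathcal F(M)\cong \mathcal F(M_1)\oplus_1\mathcal F(M_2)$, and since $Z_k$ is precisely the subspace corresponding to $\bigoplus_{B\in I_k}\mathcal F(B)$ inside $\mathcal F(M)$ (both being the span of the molecules indexed by $E_k$), we conclude $X_k\cong Z_k\cong \mathcal F(M_k)$.
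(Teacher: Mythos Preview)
Your argument is correct and follows essentially the same route as the paper: partition the edges of $G(M,d)$ according to which summand the corresponding molecule lands in, use the cycle relation $\sum d_{v_{r-1},v_r} m_{v_{r-1},v_r}=0$ together with $Z_1\cap Z_2=\{0\}$ to force each biconnected component to be monochromatic, and then identify the factors via the iterated $\diamond$-decomposition. The only cosmetic difference is that the paper phrases the cycle step as ``the edges in $C\cap E_1$ must themselves form a cycle, hence all of $C$'', whereas you argue by inspecting the coefficient of each $e_v$; these are the same observation.
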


\begin{proof} 
	Let $G=(M, E)$ be the canonical graph associated with the metric space. Let $\phi\colon \mathcal F(M)\to X_1\oplus_1 X_2$ be an isometric isomorphism. Note that 
	\[ \phi(\ext B_{\mathcal F(M)})= \ext B_{X_1}\cup \ext B_{X_2}.\] Thus, if we denote $E_k=\{\{a_i,a_j\}\in E : \phi(m_{i,j})\in \ext B_{X_k}\}$ then $\ext B_{X_k} = \{\phi(m_{i,j}): \{a_i, a_j\}\in E_k\}$, $E=E_1\cup E_2$ and $E_1\cap E_2=\emptyset$.
	
	Now, let $M=\tilde{M}_1\diamond\ldots\diamond \tilde{M}_r$ be the canonical decomposition of $M$ into indecomposable metric spaces, and let $\tilde{E}_i$ be the edges of the $i$-th biconnected component of $M$. We claim that each $\tilde{E}_i$ is either contained in $E_1$ or contained in $E_2$. Note that for every pair of distinct edges in $\tilde{E_i}$ there is a cycle containing them. Thus, it suffices to check that every cycle in $G$ is either contained in $E_1$ or contained in $E_2$. Let $C$ be a cycle in $G$ and assume that $C\cap E_1\neq \emptyset$. We have that
	\[ 0 = \sum_{\{a_i,a_j\}\in C} d_{i,j}m_{i,j} = \sum_{\{a_i,a_j\}\in C\cap E_1} d_{i,j}m_{i,j} + \sum_{\{a_i,a_j\}\in C\cap E_2} d_{i,j}m_{i,j}.\]
	Thus, $\sum_{\{a_i,a_j\}\in C\cap E_1} d_{i,j}m_{i,j}\in X_1\cap X_2 = \{0\}$. This implies that the edges in $C\cap E_1$ form a cycle, and so $C\cap E_2=\emptyset$. This proves the claim. 
	
	For $k=1, 2$, let $I_k = \{i\in \{1,\ldots, r\}: E_i\subset \tilde{E}_k	\}$	and consider $M_k$ the metric space obtained by joining the $\tilde{M}_i$ where $i\in I_k$, for $k=1,2$ with the same identifications considered before. Clearly, $\mathcal F(M)$ is isometric to $\mathcal F(M_1)\oplus_1 \mathcal F(M_2)$. Moreover, the extreme points of $B_{\mathcal F(M_k)}$ correspond, via this isometry, with the molecules $m_{i,j}$ such that $\{i,j\}\in E_k$. Therefore, $\mathcal F(M_k)$ is isometric to $X_k$, and we are done.
\end{proof}

The previous result can be written in an equivalent way that avoids the use of Lipschitz-free spaces: 	

\begin{corollary}
	Let $M=\{a_0,\ldots, a_n\}$ be a finite pointed metric space. Assume that $\Lip_0(M)$ is isometric to $X_1\oplus_\infty X_2$. Then there exist metric spaces $M_1, M_2$ (which are obtained from the decomposition of $M$ into its biconnected components) such that $X_i$ is isometric to $\Lip_0(M_i)$.	
\end{corollary}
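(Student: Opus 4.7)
The plan is to reduce the corollary to Theorem~\ref{th:1decomp} by duality, exploiting the fact that $M$ is finite so every Banach space in sight is finite-dimensional (hence reflexive) and every isometric isomorphism dualizes to an isometric isomorphism of duals.

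First, I would recall that $\Lip_0(M) = \mathcal F(M)^*$ isometrically by definition, and since $\mathcal F(M)$ is finite-dimensional we also have $\Lip_0(M)^* = \mathcal F(M)^{**} = \mathcal F(M)$ isometrically. Because $X_1\oplus_\infty X_2$ is isometric to the finite-dimensional space $\Lip_0(M)$, both summands $X_1$ and $X_2$ are finite-dimensional, and the standard identity $(X_1\oplus_\infty X_2)^* = X_1^*\oplus_1 X_2^*$ holds isometrically. Dualizing the hypothesis therefore yields an isometric isomorphism
\[
\mathcal F(M) \;\cong\; X_1^* \oplus_1 X_2^*.
\]

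Next, I would apply Theorem~\ref{th:1decomp} to this $\ell_1$-decomposition of $\mathcal F(M)$. It produces metric spaces $M_1,M_2$, obtained by grouping the biconnected components of the canonical graph of $M$, such that $X_k^*$ is isometric to $\mathcal F(M_k)$ for $k=1,2$. Taking duals once more and using that $X_k$ is finite-dimensional (hence $X_k^{**}=X_k$ isometrically), we obtain
\[
X_k \;\cong\; (X_k^*)^* \;\cong\; \mathcal F(M_k)^* \;=\; \Lip_0(M_k),
\]
which is exactly the desired conclusion.

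There is essentially no obstacle: the only thing to verify is that the $X_k$ are finite-dimensional so that biduality is an isometric isomorphism, and that follows immediately from $X_1\oplus_\infty X_2\cong \Lip_0(M)$ having finite dimension. The content of the statement lies entirely in Theorem~\ref{th:1decomp}; this corollary is a clean restatement via the duality $\Lip_0(M)\leftrightarrow\mathcal F(M)$ together with the Banach-space identity $(Y\oplus_\infty Z)^*=Y^*\oplus_1 Z^*$.
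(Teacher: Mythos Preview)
Your proof is correct and matches the paper's approach: the paper simply presents this corollary as the dual reformulation of Theorem~\ref{th:1decomp}, without spelling out the details. Your argument makes that duality explicit---dualizing $(X_1\oplus_\infty X_2)^*=X_1^*\oplus_1 X_2^*$ and using finite-dimensionality to pass back---and this is precisely what is intended.
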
	

\begin{remark} Given $1<p<\infty$, we have that $\lambda^{1/p} x_1 + (1-\lambda)^{1/p} x_2$ is an extreme point of $B_{X_1\oplus_p X_2}$ for every $\lambda\in [0,1]$ and $x_k\in \ext B_{X_k}$, $k=1,2$. Thus $B_{X_1\oplus_p X_2}$ contains an infinite number of extreme points. Therefore, it is not possible to decompose $\mathcal F(M)$ as $X_1\oplus_p X_2$ for finite $M$. 
\end{remark}

\subsection{$\ell_\infty$-decompositions. } We say that a metric space $M=\{a_0, a_1,\ldots, a_n\}$ is a \emph{spiderweb} if $n=1$ or if the canonical graph associated with $M$ is the complete bipartite graph $K_{2,n-1}$ where all the edges have the same weight. Note that if $M$ has only two points, then it is a (trivial) spiderweb. In addition, the cycle of length $4$ with equal weights is also a spiderweb. The next result shows that for a fixed number of points the spiderweb is the only one metric space whose Lipschitz-free space can be decomposed as an $\ell_\infty$-sum. 

\begin{theorem}\label{theo:inftydecomp} Let $M=\{a_0,\ldots, a_n\}$ be a finite metric space with $n\geq 3$. Then:
	\begin{itemize}
		\item[a)] If $M$ is a spiderweb, then $\mathcal F(M)$ is isometric to $\ell_1^{n-1}\oplus_\infty \mathbb R$. 
		\item[b)] Let $X_1, X_2$ be Banach spaces with $\dim(X_1)\geq \dim(X_2)\geq 1$. Assume that $\mathcal F(M)$ is isometric to $X_1\oplus_\infty X_2$. Then $X_1$ is isometric to $\ell_1^{n-1}$, $X_2=\mathbb R$ and $M$ is a spiderweb.
	\end{itemize}
\end{theorem}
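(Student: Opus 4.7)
My plan has two parts, mirroring the two claims.

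For part (a), I would use a direct change-of-variables on $\Lip_0(M)$. Let $c > 0$ be the common edge weight of the spiderweb $K_{2, n-1}$, with $a_0, a_1$ on one side of the bipartition and $a_2, \dots, a_n$ on the other. Identifying $\Lip_0(M)$ with $\RR^n$ via $f \mapsto (f(a_1), \dots, f(a_n))$, the canonical edges $\{a_0, a_i\}$ and $\{a_1, a_i\}$ for $i \ge 2$ yield the constraints $|f(a_i)| \le c$ and $|f(a_i) - f(a_1)| \le c$. Substituting $h := f(a_1)$ and $g_i := f(a_i) - f(a_1)/2$, and applying the identity $\max(|g_i + h/2|, |g_i - h/2|) = |g_i| + |h|/2$, both inequalities collapse to $|g_i| + |h|/2 \le c$. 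After rescaling $h$ by $1/2$, this is the unit ball of $\ell_\infty^{n-1} \oplus_1 \RR$, so $\Lip_0(M) \cong \ell_\infty^{n-1} \oplus_1 \RR$, and by duality $\FF(M) \cong \ell_1^{n-1} \oplus_\infty \RR$.

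For part (b), let $\phi \colon \FF(M) \to X_1 \oplus_\infty X_2$ be the asserted isometry, and set $2p := |\ext B_{X_1}|$, $2q := |\ext B_{X_2}|$. Since the extreme points of $B_{X_1 \oplus_\infty X_2}$ are the products $(v, w)$ with $v \in \ext B_{X_1}$ and $w \in \ext B_{X_2}$, the map $\phi$ bijects the $2|E|$ molecules of $B_{\FF(M)}$ with $\ext B_{X_1} \times \ext B_{X_2}$, so $|E| = 2pq$ and $\dim X_1 + \dim X_2 = n$. The first step, which I expect to be the main obstacle, is to reduce to $\dim X_2 = 1$. By duality, $\Lip_0(M) = X_1^* \oplus_1 X_2^*$; every norm-one $\varphi \in X_2^*$ is an extreme point of $B_{\Lip_0(M)}$, and the face of $B_{\FF(M)}$ exposed by $\varphi$ collects exactly $|E|$ molecules (one per canonical edge with a prescribed orientation). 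Hence as a Lipschitz function $f_\varphi$ on $M$, $\varphi$ must be tight on every canonical edge. My plan is to exploit the combinatorial structure: any such $f_\varphi$ is determined by a sign assignment $(\epsilon_e)_{e \in E} \in \{\pm 1\}^E$ satisfying all cycle-consistency conditions, and combining this with the rigidity imposed by the $\ell_1$-direct-sum splitting (the equality $\|\varphi + \psi\|_{\Lip_0(M)} = \|\varphi\|_{X_2^*} + \|\psi\|_{X_1^*}$ forced for every $\varphi \in X_2^*$, $\psi \in X_1^*$) should constrain the $X_2^*$ summand to carry only a $1$-dimensional extremal structure, forcing $\dim X_2 = 1$.

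Once $X_2 = \RR$, set $u := \phi^{-1}(0, 1) \in \FF(M)$. The $4p = 2|E|$ molecules split into $2p$ pairs $\{m_v^+, m_v^-\}_{v \in \ext B_{X_1}}$ with $m_v^+ - m_v^- = 2u$, and the pairing commutes with negation. Writing each pair as $(m_{i,j}, m_{k,l})$, one has $(e_i - e_j)/d_{i,j} - (e_k - e_l)/d_{k,l} = \pm 2u$ with $u$ fixed across all pairs. A short case analysis on vertex overlap then yields the spiderweb structure: (i) two disjoint edges in a pair are excluded, since the support $\{i, j, k, l\} \setminus \{0\}$ of the left-hand side would vary across pairs while $u$ stays fixed; (ii) every pair thus shares exactly one vertex, which must be a \emph{leaf} $a_s$, and matching the $e_s$-coefficient of the pair difference with that of $\pm 2u$ forces $d_{s,i} = d_{s,j}$ within each pair; (iii) the non-leaf endpoints of every pair coincide as the same two-element set $\{a_0, a_1\}$ (the \emph{hubs}), since otherwise the support of $u$ on non-leaf indices would disagree across pairs; and (iv) matching $\pm 2u$ across different leaves $s$ finally forces all edge weights equal to a single constant. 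This matches the definition of a spiderweb $K_{2, n-1}$ with equal weights, and part (a) then gives $X_1 \cong \ell_1^{n-1}$.
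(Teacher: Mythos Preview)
Your argument for part (a) is correct and gives a clean alternative to the paper's proof: you work directly in the dual $\Lip_0(M)$ via an explicit affine change of coordinates, whereas the paper locates a centrally symmetric facet of $B_{\FF(M)}$ and reads the $\ell_\infty$-sum decomposition off geometrically. Both are short; yours is purely computational, the paper's is more structural.

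Part (b), however, has two genuine gaps. In the reduction to $\dim X_2=1$, the statement that ``every norm-one $\varphi\in X_2^*$ is an extreme point of $B_{\Lip_0(M)}$'' is false once $\dim X_2\ge 2$: in an $\ell_1$-sum only the extreme points of $B_{X_2^*}$ are extreme in the sum. Likewise, the claim that the face of $B_{\FF(M)}$ exposed by such a $\varphi$ contains exactly $|E|$ molecules is unjustified; that face is $B_{X_1}\times F_\varphi$, with $2p\cdot|\ext F_\varphi|$ vertices, and there is no reason this equals $2pq$. The appeal to ``cycle-consistency conditions'' and ``rigidity imposed by the $\ell_1$-direct-sum splitting'' is a hope, not an argument.

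Second, even granting $\dim X_2=1$, your case analysis breaks at step (i). You assert that disjoint edges in a pair are excluded because the support of $2u$ would otherwise vary across pairs, but for $n=3$ this is simply false: the $4$-cycle with equal weights \emph{is} a spiderweb, and in that example the pairs $(m_v^+,m_v^-)$ do come from disjoint edges, all supported on the same four-point set. So case (i) must be analysed, not discarded, and for larger $n$ your support argument still needs the auxiliary fact that the canonical graph is connected on all $n+1$ vertices to reach a contradiction. The paper handles both difficulties through one structural lemma (Lemma~\ref{lemma:symfacet}) classifying centrally symmetric faces of $B_{\FF(M)}$ of dimension at least $2$: any such face is either a square on four equidistant points --- precisely your discarded case --- or of the form $\conv\{m_{x,z},m_{z,y}:z\in\Mid(x,y)\}$. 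That lemma is what drives both the reduction to $\dim X_2=1$ (by comparing the nested centrally symmetric faces $B_{X_1}+u\subsetneq B_{X_1}+[u,v]$ when $\dim X_2\ge 2$) and the identification of $M$ as a spiderweb afterwards; it is the missing ingredient in your plan.
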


The following lemma is the key to prove Theorem~\ref{theo:inftydecomp}.  It characterizes centrally symmetric faces of the ball of a free space. Given $x,y\in M$, let us denote
\[ \Mid(x,y) = \{z\in M : 2d(x,z)=2d(z,y)=d(x,y)\}. \]

Note that $M$ is a spiderweb precisely if there are $x,y\in M$ such that $\Mid(x,y)=M\setminus\{x,y\}$ and for all $z,u\in M\setminus \{x,y\}$ we have $d(z,u) = d(x,y)$.

\begin{lemma}\label{lemma:symfacet} Let $M$ be a finite pointed metric space. Assume that $F$ is a centrally symmetric face of $B_{\mathcal F(M)}$ with $\dim F\geq 2$. Then one of the following hold:
	\begin{itemize}
		\item[a)] There are distinct points $x,y,u,v\in M$ such that  $d(x,y)=d(u,v)=d(x,v)=d(u,y)$ and \[F=\conv\{m_{x,y}, m_{u,v}, m_{x,v}, m_{u,y}\}.\]
		\item[b)]  There are $x,y\in M$, $x\neq y$, such that 
		\[F = \conv\{m_{x,z}, m_{z,y} : z\in \Mid(x,y)\}.\]
	\end{itemize} 		
\end{lemma}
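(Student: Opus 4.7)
The plan is to fix a norming Lipschitz function $f \in \Lip_0(M)$ with $\|f\|_{\Lip_0(M)} = 1$ such that $F = \{\mu \in B_{\mathcal F(M)} : \langle f, \mu\rangle = 1\}$; then the vertices of $F$ are exactly the molecules $m_{x,y}$ with $\{x,y\}$ a canonical edge and $f(x) - f(y) = d(x,y)$. Writing $c$ for the centre of symmetry of $F$, the map $\sigma(m) = 2c - m$ is a fixed-point-free involution on the vertex set. It will be convenient to work in $\mathbb{R}^{n+1}$, identifying $m_{x,y}$ with $(e_x - e_y)/d(x,y)$ as described at the end of Section~\ref{sec:graphs}, so that every molecule has support exactly $\{x,y\}$. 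I will then split into two cases according to whether $F$ contains a molecule in its relative interior.

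In Case~1, I assume that some molecule $m_{x,y}$ belongs to the relative interior of $F$. By Theorem~\ref{theor:faces}(a), every vertex of $F$ has the form $m_{u,v}$ with $u, v \in [x,y]$ and $d(x,y) = d(x,u) + d(u,v) + d(v,y)$. The target is to deduce that $c = m_{x,y}$ and $F = \conv\{m_{x,z}, m_{z,y} : z \in \Mid(x,y)\}$, yielding case~(b). The starting point is a vertex $m_{x,z_1}$, which exists because the first canonical edge along any geodesic from $x$ to $y$ provides one. Equating coefficients in the pair-sum equation $m_{x,z_1} + \sigma(m_{x,z_1}) = 2c$ and using that the partner $\sigma(m_{x,z_1}) = m_{u',v'}$ again satisfies $u', v' \in [x,y]$, I expect the entries at $e_x$, $e_{z_1}$ and $e_y$ to force $u' = z_1$, $v' = y$ and $d(x,z_1) = d(z_1,y) = d(x,y)/2$, so $z_1 \in \Mid(x,y)$ and $c = m_{x,y}$. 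Applying the same argument to every vertex then pins down the full vertex set.

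In Case~2, I assume that no molecule lies in the relative interior of $F$, pick any vertex $m_{x,y}$, and examine its partner $m_{u,v} = \sigma(m_{x,y})$. I will first exclude the degenerate overlap $\{x,y\} \cap \{u,v\} \neq \emptyset$: for example, if $u = y$, the pair-sum equation $m_{x,y} + m_{y,v} = 2c$ combined with $\|f\|_{\Lip_0(M)} = 1$ forces $d(x,v) = d(x,y) + d(y,v)$, which places the non-extreme molecule $m_{x,v}$ inside $F$; a further check then shows $m_{x,v}$ sits in the relative interior, contradicting the case hypothesis. Thus $x, y, u, v$ are four distinct points and $2c$ has support $\{x,y,u,v\}$. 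Any additional vertex of $F$ must pair to give the same $2c$, and enumerating directed canonical pairs inside $\{x,y,u,v\}$ and matching coefficients of the $e_i$'s shows that the only admissible extra pair is $(m_{x,v}, m_{u,y})$, together with the equalities $d(x,y) = d(u,v) = d(x,v) = d(u,y)$. Checking that no other tight pair appears (from the fixed values of $f$ on the four points) then produces case~(a) exactly.

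The main obstacle will be the coordinate analysis in Case~1 that determines the partner of $m_{x,z_1}$. A~priori this partner could be a molecule supported on points outside $\{x, z_1, y\}$, with the extra coefficients cancelling in the pair sum; ruling out such ``phantom'' partners requires combining the description of $V(F)$ from Theorem~\ref{theor:faces} with the linear-independence content of Lemma~\ref{lemma:basis} applied to the subfamily of molecules whose supports lie in $[x,y]$. Once this rigidity is established, both the completion of Case~1 and the enumeration in Case~2 reduce to elementary combinatorial checks.
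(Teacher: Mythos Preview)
Your dichotomy (a molecule in the relative interior of $F$ versus not) is appealing, but both branches have a real gap, and in each case the missing ingredient is precisely the paper's main device: compare the sums of \emph{two} pairs of opposite vertices so that the unknown centre $c$ drops out.

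\textbf{Case 1.} From $m_{x,z_1}+\sigma(m_{x,z_1})=2c$ you cannot read off $(u',v')$ or $c$: this is one vector equation with $c$ unknown, so the ``entries at $e_x,e_{z_1},e_y$'' determine nothing. Your proposed fix via Lemma~\ref{lemma:basis} does not address this; linear independence of molecules along a spanning tree does not pin down $c$. What you would actually need is either (i) a separate argument that the unique molecule in $\operatorname{relint}F$ (which by Theorem~\ref{theor:faces} is $m_{x,y}$) must be fixed by $\sigma$, or (ii) a second pair $m_{p,q}+\sigma(m_{p,q})=2c$ and then the equation $m_{x,z_1}+\sigma(m_{x,z_1})=m_{p,q}+\sigma(m_{p,q})$. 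Route (ii) is exactly what the paper does, and once you write that equation you are back in the paper's support-matching analysis; the relint hypothesis is no longer used.

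\textbf{Case 2.} Your exclusion of the overlap $\{x,y\}\cap\{u,v\}\neq\emptyset$ by ``for example $u=y$'' does not generalise. If $u=y$ (or $v=x$) you correctly concatenate the two tight pairs to get $d(x,v)=d(x,y)+d(y,v)$ and place $m_{x,v}$ on the open diagonal $(m_{x,y},m_{y,v})\subset\operatorname{relint}F$. But if $u=x$ (partner $m_{x,v}$), the two tight equalities $f(x)-f(y)=d(x,y)$ and $f(x)-f(v)=d(x,v)$ only give $f(y)-f(v)=d(x,v)-d(x,y)$, which need not be $\pm d(y,v)$; no triangle equality is forced, and no molecule appears on the diagonal $(m_{x,y},m_{x,v})$ since every interior point has three-point support. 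The paper handles this sub-case differently: using a \emph{second} pair of opposite vertices $m_{x',y'},m_{u',v'}$ (which exists because $\dim F\ge2$) and the equation $m_{x,y}+m_{x,v}=m_{x',y'}+m_{u',v'}$, one is forced to $x'=u'=x$ and then $\{y',v'\}=\{y,v\}$, contradicting $m_{x',y'}\neq m_{x,y}$.

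In short, the paper's proof never introduces the relint dichotomy; it simply picks a vertex $m_{x,y}$, its partner $m_{u,v}$, and one more vertex $m_{x',y'}$ with partner $m_{u',v'}$, and analyses $m_{x,y}+m_{u,v}=m_{x',y'}+m_{u',v'}$ by cases on $\{x,y\}\cap\{u,v\}$. Your plan would need this same step to close both gaps, so the detour through Theorem~\ref{theor:faces} does not buy anything.
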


\begin{proof}
	Let $m_{x,y}\in \ext F\subset \ext B_{\mathcal F(M)}$ and denote $m_{u,v}$ the opposite vertex with respect to the center of $F$. Since $\dim F\geq 2$, there is another vertex $m_{x',y'}$ of $F$, and let $m_{u',v'}$ be its opposite with respect to the center of $F$. Note that the  middle point of the segments $[m_{x,y}, m_{u,v}]$ and $[m_{x',y'}, m_{u',v'}]$ coincide. That is,
	\begin{equation}\label{eq:4mol} m_{x,y} + m_{u,v} = m_{x',y'} + m_{u',v'}\end{equation}
	We distinguish two cases:
	
	\emph{Case 1. $\{x,y\}\cap\{u,v\}=\emptyset$}. Then, up to exchanging the role of $m_{x',y'}$ and $m_{u',v'}$, it follows by linear independence that $x'=x$ and $u'=u$. It follows then that $y'=v$ and $v'=y$, and $d(x,y)=d(u,v)=d(x',y')=d(u',v')$. Since $m_{x',y'}$ was an arbitrary vertex of $F$, we have  $F=\conv\{m_{x,y}, m_{u,v}, m_{x,v}, m_{u,y}\}$. 
	
	\emph{Case 2. $\{x,y\}\cap\{u,v\}\neq\emptyset$}. First assume that $u=x$. It follows from \eqref{eq:4mol} that $x'=u'=x$. That is, 
	\[ m_{x,y} + m_{x,v} = m_{x,y'} + m_{x,v'}.\]
	Then $\{y',v'\}=\{y,v\}$. Since $m_{x,y}\neq m_{x,v}$, we have $y'=v$ and $v'=y$. That is a contradiction. Therefore $u\neq x$. Analogously, $v\neq y$. So either $u=y$ or $v=x$. Let's assume that we are in the case $u=y$. Then $v\neq x$ since $m_{x,y}$ and $m_{y,x}$ cannot belong to the same face, and
	\[ m_{x,y} + m_{y,v} = m_{x',y'} + m_{u',v'}.\]
	Assume first that $d(x,y)\neq d(y,v)$. Then $\operatorname{supp}\{m_{x',y'} + m_{u',v'}\}=\operatorname{supp}\{m_{x,y} + m_{y,v}\}$ has at most three elements and so $d(x',y')=d(u',v')$. But then we also have $d(x,y)=d(y,v)$, a contradiction.
	
	Thus, $d(x,y)=d(y,v)$, and it follows that they are also equal to $d(x',y')$ and $d(u',v')$. Therefore
	\[ e_x-e_v = e_{x'}-e_{y'}+e_{u'}-e_{v'}\]
	and so $\{m_{x',y'}, m_{u',v'}\}=\{m_{x,z}, m_{z,v}\}$ for some $z\in M$ satisfying that $d(x,z)=d(z,v)=d(x,y)=d(y,v)$. That happens for any vertex of $F$. Thus,
	\[ F=\conv(\ext F)\subset \conv\{m_{x,z}, m_{z,v}: d(x,z)=d(z,v)\}\]
	and $m_{x,z}\in F$ if and only if $m_{z,v}\in F$. Let $f\in S_{\Lip_0(M)}$ be an exposing functional for $F$. Pick $z$ such that $m_{x,z}, m_{z,v}\in F$. Then $f(x)-f(z)=d(x,z)$ and $f(z)-f(v)=d(z,v)$. Thus,
	\begin{equation*} d(x,v)\geq f(x)-f(v)=(f(x)-f(z))+(f(z)-f(v))=d(x,z)+d(z,v)\end{equation*}
	and so $z\in \Mid(x,y)$ and $f(x)-f(v)=d(x,v)$. This means that
	\[ F\subset \conv\{m_{x,z}, m_{z,v}: z\in \Mid(x,v)\}.\]
	Moreover, for every $z\in \Mid(x,y)$ we have
	\begin{equation*}
	d(x,v)= f(x)-f(v)=(f(x)-f(z))+(f(z)-f(v))\leq d(x,z)+d(z,v)= d(x,v)\end{equation*}
	and so $m_{x,z}, m_{z,v}\in F$. This shows that 
	\[ F= \conv\{m_{x,z}, m_{z,v}: z\in \Mid(x,v)\}\]
	as desired. 	The case $v=x$ is analogous and we get that  $F=\conv\{m_{u,z}, m_{z,y}: z\in \Mid(u,y)\}$.
\end{proof}

Note that the previous result shows that if $F_1\subset F_2$ are centrally symmetric faces of $B_{\mathcal F(M)}$ and $F_1$ has at least $6$ vertices, then $F_1=F_2$. 

\begin{proof}[Proof of Theorem~\ref{theo:inftydecomp}]
	Assume first that $M$ is a spiderweb, that is, there are $x,y\in M$ such that $\Mid(x,y)=M\setminus\{x,y\}$ and that $m_{u,v}\notin\ext B_{\mathcal F(M)}$ if $u,v\in M\setminus\{x,y\}$. Up to isometry, we may assume that $x$ is the distinguished point of $M$ and $d(x,y)=2$. Consider the $1$-Lipschitz function $f$ given by $f(x)=0$, $f(y)=2$ and $f(z)=1$ otherwise. Let $F=\{m\in B_{\mathcal F(M)}: \<f,m\>=1\}$.
	Note that
	\[F= \conv\{m_{z,x}, m_{y,z}: z\in M\setminus\{x,y\}\}.\]
	Then $\ext B_{\mathcal F(M)}\subset F\cup(-F)$, so $F$ is a facet of $B_{\mathcal F(M)}$. Moreover, 
	\[ \frac{m_{z,x}+m_{y,z}}{2}= m_{y,x}\]
	for all $z\in M\setminus\{x,y\}$, so $F$ is symmetric with respect to $m_{y,x}$. 
	
	Let $K= F-m_{y,x}$. Then $K=B_Y$ for some Banach space $Y$. Therefore,
	\[B_{\mathcal F(M)}=\conv(F\cup(-F)) = B_Y + [m_{x,y}, m_{y,x}]\]
	and so $\mathcal F(M) = Y\oplus_\infty \mathbb R$. Finally, it is clear that $\#\ext(B_Y)=\#\ext(F)=2(n-1)$ and so $Y$ is isometric to $\ell_1^{n-1}$. This proves a).  
	
	Now, assume that $\mathcal F(M)$ is isometric to $X_1\oplus_\infty X_2$ and that $\dim(X_2)\geq 2$. Take $u,v \in \ext B_{X_2}$ such that $[u,v]$ is an edge of $B_{X_2}$. Then both $B_{X_1}+u$ and $B_{X_1}+[u,v]$ are centrally symmetric facets of $B_{X_1\oplus_\infty X_2}$.
	
	\emph{Case 1.} Assume $\dim(X_1)\geq 3$. Then $\#\ext(B_{X_1}+u)=\#\ext(B_{X_1})\geq 6$. Therefore Lemma \ref{lemma:symfacet} yields $B_{X_1}+u=B_{X_1}+[u,v]$, a contradiction.
	
	\emph{Case 2. }	Suppose now that $\dim(X_1)=\dim(X_2)=2$ and so $n=4$. Then $B_{X_1}+[u,v]$ is a facet of $B_{X_1\oplus_{\infty} X_2}$ with at least $8$ vertices. By Lemma \ref{lemma:symfacet}, there are $x,y$ in $M$ such that the set 
	\[  \conv\{m_{x,z}, m_{z,y}: z\in \Mid(x,y)\}\]
	has $8$ vertices. But this is impossible since $\#\Mid(x,y)\leq 3$. 
	
	Therefore, $X_2=\mathbb R$. Then we have $B_{\mathcal F(M)} = B_{X_1} + [-u_0,u_0]$, for a certain vector $u_0$. Thus, $F=B_{X_1}+u_0$ is a centrally symmetric facet of $B_{\mathcal F(M)}$.  Now, we distinguish again two cases:
	
	\emph{Case 1. } Assume $n=3$, that is, $M$ has four points. By Lemma \ref{lemma:symfacet}, we have two possible cases:
	
	\emph{Case 1.1.} $F=\conv A$, where $A=\{m_{x,y}, m_{u,v}, m_{x,v}, m_{u,y}\}\subset \ext B_{\mathcal F(M)}$, and $d(x,y)=d(u,v)=d(x,v)=d(u,y)$. Then  $M=\{x,y,u,v\}$ and $\#\ext B_{X_1}=\#\ext F=4$. Therefore, $B_{\mathcal F(M)}$ has precisely $8$ vertices, the ones in $A\cup(-A)$. That is, the edges of the canonical graph associated with $M$ are $\{x,y\}$, $\{u,v\}$, $\{x,v\}$ and $\{u,y\}$, and all of them have the same weight. So $M$ is a cycle of length $4$ with equal weights, in particular, a spiderweb.
	
	\emph{Case 1.2.} There are $x,y\in M$ such that $F=\conv\{m_{x,z}, m_{z,y}: z\in \Mid(x,y)\}$. Since $F$ has at least $4$ vertices, we have that $\Mid(x,y)=M\setminus\{x,y\}$ and $\#\ext F=4$. Then $\#\ext B_{\mathcal F(M)}=8$. It follows that $M$ is again a cycle of length $4$ with equal weights.  
	
	\emph{Case 2. } Assume $n\geq 4$. We have $\#\ext F=\#\ext B_{X_1}\geq 2(n-1)\geq 6$. Then the first case in Lemma \ref{lemma:symfacet} does not hold. Therefore, 
	there are $x,y\in M$ such that $F = \conv A$, where $A=\{m_{x,z}, m_{z,y} : z\in \Mid(x,y)\}$.  Note that $\ext F\subset A$ and $\ext B_{\mathcal F(M)}\subset \ext F\cup \ext(-F)$.  It follows that for every $z\in M\setminus\{x,y\}$ we have $\{x,z\}, \{z,y\}\in E$, and all the edges are of this form. Thus, $M$ is a spiderweb. Moreover, it follows that $\#\ext B_{X_1}=2(n-1)$ and so $X_1$ is isometric to $\ell_1^{n-1}$.

\end{proof}

Combining Proposition~\ref{prop:zonotopes} and Theorem~\ref{theo:inftydecomp}, we get the following result. 

\begin{corollary}\label{cor:ellinfty}
	Let $M=\{a_0,\ldots, a_n\}$ be a finite pointed metric space. Then $\mathcal F(M)$ is isometric to $\ell_\infty^n$ if and only if $n\leq 2$ and $M$ is tree, or $n=3$ and $M$ is a cycle with equal weights. 
\end{corollary}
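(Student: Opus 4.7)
The plan is to prove the two directions separately, treating the low-dimensional cases $n\le 2$ directly and using Theorem~\ref{theo:inftydecomp} as the main tool for $n\ge 3$.

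For the necessity direction, suppose $\mathcal F(M)$ is isometric to $\ell_\infty^n$. The case $n=1$ is trivial since $M$ has two points and is automatically a tree with $\mathcal F(M)=\mathbb R=\ell_\infty^1$. For $n=2$, I would use Corollary~\ref{coro:extreme}: $B_{\mathcal F(M)}$ has $2|E|$ vertices, where $E$ is the edge set of the canonical graph; since $B_{\ell_\infty^2}$ is a square with $4$ vertices and an isometry preserves extreme points, $|E|=2$, which (for a connected graph on $3$ vertices) forces $M$ to be a tree. For the main case $n\ge 3$, write the standard isometric decomposition $\ell_\infty^n\cong \ell_\infty^{n-1}\oplus_\infty \mathbb R$ with $\dim \ell_\infty^{n-1}=n-1\ge 2\ge 1=\dim\mathbb R$, and apply Theorem~\ref{theo:inftydecomp}(b) with $X_1=\ell_\infty^{n-1}$ and $X_2=\mathbb R$. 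That theorem forces $\ell_\infty^{n-1}$ to be isometric to $\ell_1^{n-1}$; comparing the number of extreme points of their unit balls ($2^{n-1}$ versus $2(n-1)$) gives $n-1\le 2$, and combined with $n\ge 3$ this yields $n=3$. The same theorem also tells us that $M$ is a spiderweb, and a spiderweb on $4$ points is by definition the complete bipartite graph $K_{2,2}$ with equal edge weights, which is precisely the $4$-cycle with equal weights.

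For the sufficiency direction, the cases $n\le 2$ with $M$ a tree are handled by Proposition~\ref{prop:tree}: $B_{\mathcal F(M)}$ is an affine image of $B_1^n$, which is a segment ($n=1$) or a parallelogram ($n=2$), and any such norm is isometric to $\ell_\infty^n$ (after choosing a pair of parallel edges as coordinate axes, one obtains the supremum norm). For $n=3$ with $M$ the $4$-cycle of equal weights, $M$ is a spiderweb by definition, so Theorem~\ref{theo:inftydecomp}(a) gives
\[\mathcal F(M)\cong \ell_1^{n-1}\oplus_\infty \mathbb R = \ell_1^2\oplus_\infty \mathbb R\cong \ell_\infty^2\oplus_\infty \mathbb R=\ell_\infty^3,\]
where we used the isometric identification $\ell_1^2\cong \ell_\infty^2$ of $2$-dimensional parallelogram norms.

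No serious obstacle arises once Theorem~\ref{theo:inftydecomp} is in hand; the only care needed is that this theorem requires $n\ge 3$, so the low-dimensional cases have to be settled by inspecting the number of extreme points of $B_{\mathcal F(M)}$, which is a straightforward consequence of the results of Section~\ref{sec:graphs}.
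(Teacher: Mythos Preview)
Your proof is correct. The route differs slightly from the paper's: the paper states that the corollary follows by combining Proposition~\ref{prop:zonotopes} (the zonotope characterization) with Theorem~\ref{theo:inftydecomp}, whereas you bypass Proposition~\ref{prop:zonotopes} entirely. In the paper's intended argument, the forward direction would go via the observation that $B_{\ell_\infty^n}$ is a zonotope, so Proposition~\ref{prop:zonotopes} immediately forces $n\le 2$ or ($n=3$ and $M$ is the equal-weight $4$-cycle); the case $n=2$ is then finished by the same extreme-point count you give. Your alternative for $n\ge 3$ decomposes $\ell_\infty^n=\ell_\infty^{n-1}\oplus_\infty\mathbb R$ and feeds this into Theorem~\ref{theo:inftydecomp}(b), obtaining $\ell_\infty^{n-1}\cong\ell_1^{n-1}$, which forces $n=3$ by comparing $2^{n-1}$ with $2(n-1)$. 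Both arguments are short; yours has the minor advantage of making the corollary depend on Theorem~\ref{theo:inftydecomp} alone, while the paper's route highlights the connection with the zonotope characterization that is developed anyway in the next subsection. The sufficiency direction you give is the same as what the paper would use.
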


\subsection{Zonotopes.} Godard \cite{Go} characterized the metric spaces $M$ such that $\mathcal F(M)$ is isometric to a subspace of $L_1$ as those metric spaces which embed into an $\mathbb{R}$-tree. In the finite-dimensional setting, the embeddability into $L_1$ is equivalent to the fact that the dual ball $B_{\Lip_0(M)}$ is a zonoid \cite{Bo} (see also \cite{Sc,Ko, RZ}). Let us recall that a convex body is said to be a \emph{zonotope} if it is a finite Minkowski sum of segments, and it is said to be a \emph{zonoid} if it is the limit, in the Hausdorff distance, of a sequence of zonotopes. Both notions coincide for polytopes. Zonoids enjoy very nice geometrical properties, for instance, all faces are also zonoids and thus centrally symmetric. We refer the reader to \cite{Sc, GW} for more results about zonoids and zonotopes. 

Zonoids satisfy Mahler's conjecture \cite{R1,GMR}. Thus, it makes sense to wonder about when $B_{\mathcal F(M)}$ is a zonoid. Unfortunately, there are few cases when that happens, as the following result shows. 

\begin{proposition}\label{prop:zonotopes} Let $M=\{a_0, \ldots, a_n\}$ be a finite pointed metric space. The following are equivalent:
	\begin{enumerate}[i)]
		\item $B_{\mathcal F(M)}$ is a zonotope.
		\item $n\leq 2$, or $n=3$ and $M$ is a cycle graph (of length $4$) with equal weights.
	\end{enumerate} 
\end{proposition}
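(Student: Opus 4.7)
The plan is to handle the two directions separately. For (ii)$\Rightarrow$(i), when $n\leq 2$ the body $B_{\mathcal F(M)}$ is a centrally symmetric polytope of dimension at most $2$ (a segment or a centrally symmetric polygon), which is automatically a zonotope; when $n=3$ and $M$ is a $4$-cycle with equal weights, Corollary~\ref{cor:ellinfty} gives $\mathcal F(M)\cong\ell_\infty^3$, so $B_{\mathcal F(M)}$ is a parallelepiped and hence a zonotope.

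For (i)$\Rightarrow$(ii), assume $B_{\mathcal F(M)}$ is a zonotope with $n\geq 3$. The key property used throughout is that every face of a zonotope is itself a zonotope, in particular centrally symmetric. The first step is to rule out decomposable $M$: if $M=M_1\diamond M_2$ with both pieces containing non-base points, then $B_{\mathcal F(M)}=\conv\bigl(B_{\mathcal F(M_1)}\times\{0\}\cup\{0\}\times B_{\mathcal F(M_2)}\bigr)$, and picking an edge $[v_1,v_2]$ of $B_{\mathcal F(M_1)}$ and a vertex $w$ of $B_{\mathcal F(M_2)}$ produces the triangular 2-face $\conv\{(v_1,0),(v_2,0),(0,w)\}$, exposed by the direct sum of the corresponding exposing functionals. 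This contradicts central symmetry. The second step is a vertex-count bound: any zonotope in $\mathbb R^n$ has at least $2^n$ vertices (with equality forcing a parallelepiped), while $B_{\mathcal F(M)}$ has $2|E|\leq 2\binom{n+1}{2}$ vertices by Corollary~\ref{coro:extreme}; since $2\binom{n+1}{2}<2^n$ for $n\geq 5$, the problem reduces to $n\in\{3,4\}$.

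The main step is then to apply the $1$-Lipschitz function $f(x)=d(a_0,x)$ and analyze its exposed face $F_f=\{\mu\in B_{\mathcal F(M)}:\langle f,\mu\rangle=1\}$, whose vertices are the extreme molecules $m_{u,v}$ with $a_v$ on a geodesic from $a_0$ to $a_u$. When $a_0$ is adjacent in the canonical graph to every other vertex, no intermediate geodesic points exist, so $F_f=\conv\{m_{j,0}: a_j\text{ neighbor of }a_0\}$ is an $(n-1)$-simplex and hence not centrally symmetric. For $n=4$ this case is forced by the average-degree estimate $2|E|/(n+1)\geq 16/5>3$ (so some vertex has maximum degree $4$); for $n=3$ it covers the indecomposable graphs $K_4$ and $K_4\setminus e$. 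The sole remaining configuration is $n=3$ with $M$ a $4$-cycle of weights $w_1,w_2,w_3,w_4$ (and $a_0$ adjacent to $a_1$ and $a_3$), for which direct computation shows $F_f$ is a triangle when $w_1+w_2\neq w_3+w_4$ (not centrally symmetric) and a quadrilateral $\conv\{m_{1,0},m_{3,0},m_{2,1},m_{2,3}\}$ when $w_1+w_2=w_3+w_4$; equating the midpoints of its diagonals $[m_{1,0},m_{2,1}]$ and $[m_{3,0},m_{2,3}]$ then forces $w_1=w_2=w_3=w_4$.

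The main obstacle is this final weight analysis together with the uniform simplex argument: one must identify the correct pairing of the four vertices of $F_f$ into diagonals, verify the parallelogram condition explicitly, and ensure that no extra extreme molecule (coming from some non-obvious geodesic in the canonical graph) enters $F_f$ and spoils the simplex picture in the full-degree case. Once these computations are in place, the chain of reductions yields exactly the stated equivalence.
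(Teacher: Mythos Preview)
Your argument is essentially correct and takes a genuinely different route from the paper's. Two small points need attention. First, citing Corollary~\ref{cor:ellinfty} for the implication (ii)$\Rightarrow$(i) is circular, since that corollary is derived \emph{from} Proposition~\ref{prop:zonotopes}; instead cite Theorem~\ref{theo:inftydecomp}(a), or simply verify directly that the $4$-cycle with equal weights gives a parallelepiped. Second, in the final $C_4$ step you assert that $[m_{1,0},m_{2,1}]$ and $[m_{3,0},m_{2,3}]$ are the diagonals of the quadrilateral $F_f$ without justification. The clean fix is to check all three pairings of the four vertices: the pairings $\{m_{1,0},m_{3,0}\}$ vs.\ $\{m_{2,1},m_{2,3}\}$ and $\{m_{1,0},m_{2,3}\}$ vs.\ $\{m_{3,0},m_{2,1}\}$ both force $1/w_1=-1/w_2$, which is impossible, so the remaining pairing is the only candidate and forces equal weights. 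Also, in the decomposable step, make explicit that when one factor has only two points (so its ball is a segment with no edge), you swap the roles of $M_1$ and $M_2$; since $n\ge 3$, at least one factor has $\ge 3$ points.

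As for the comparison: the paper bounds the number of vertices of a \emph{facet} rather than of the whole ball. Using Lemma~\ref{lemma:symfacet}, any centrally symmetric facet has at most $2(n-1)$ vertices, while a zonotope facet (being itself a zonotope of dimension $n-1$) has at least $2^{n-1}$; this disposes of all $n\ge 4$ in one stroke. The $n=3$ analysis is similar to yours (leaf case, degree-$3$ vertex via $f(a_i)=d_{i,0}$, cycle case), but the paper closes the cycle case by invoking Lemma~\ref{lemma:symfacet} again rather than computing pairings by hand. Your approach trades that structural lemma for the global vertex bound $2^n\le n(n+1)$, at the cost of having to treat $n=4$ separately via the average-degree argument. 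Both are valid; yours is more elementary and self-contained, while the paper's is more uniform in $n$.
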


\begin{proof}
	ii)$\Rightarrow$ i). Every $2$-dimensional symmetric polytope is a zonotope. In addition, if $n=3$ and the canonical graph associated with $M$ is the cycle graph with equal weights then $B_{\mathcal F(M)}$ is a linear image of the cube $B_\infty^3$.
	
	i)$\Rightarrow$ ii). Let $F$ be a (centrally symmetric) facet of $B_{\mathcal F(M)}$. Assume that $n\geq 4$.  Since $F$ is a zonoid of dimension $n-1$, it has at least $2^{n-1}$ vertices. In addition, by Lemma \ref{lemma:symfacet}, there are $x,y$ such that $F=\conv\{m_{x,z},m_{z,y}: d(x,z)=d(z,y)\}$. Then 
	\[2^{n-1}\leq \#\ext F\leq 2\#\{z\in M : d(x,z)=d(z,y)\}\leq 2(n-1),\]
	a contradiction. Thus, $n\leq 3$. In the case $n=2$ there is nothing to prove. Thus, we may assume that $n=3$. 
	Let $G=(V,E,w)$ be the canonical graph associated with $M$.  
	We distinguish some cases:
	
	\emph{Case 1. $G$ has at least one leaf, say $a_0$}. Then $\mathcal F(M)$ is a $\ell_1$-sum of $\mathbb R$ and another space. It is easy to check that then all the facets of $B_{\mathcal F(M)}$ are not centrally symmetric. 
	
	\emph{Case 2. There is a node in $G$ with degree $3$.} We may assume that the node is $a_0$. Consider the $1$-Lipschitz function $f$ given by $f(a_i)=d_{i,0}$. Note that if $i,j\neq 0$  are distinct then $|f(a_i)-f(a_j)|<d_{i,j}$. Thus,  $\<f, m_{i,j}\>=1$ if, and only if, $m_{i,j}=m_{i,0}$ and $i\neq 0$. That is, the face of $B_{\mathcal F(M)}$ given by $f$ has dimension $2$ and an odd number of vertices, so it is not centrally symmetric, a contradiction.

	Since the previous cases lead to a contradiction, we have that $G$ is a cycle, so $B_{\mathcal F(M)}$ has $8$ vertices. Thus, $F$ has $4$ vertices. It follows that, for any $\{x,y\}\in E$, either $m_{x,y}\in F$ or $m_{y,x}\in F$. Lemma \ref{lemma:symfacet} says that, in any case, the set $\{d(x,y): \{x,y\}\in E\}$ is a singleton. That is, all the edges have the same weight, as desired. 
\end{proof}

\subsection{Hanner polytopes.} A symmetric convex body $K$ is called a \emph{Hanner polytope} if $K$ is one-dimensional, or it is the $\ell_1$ or $\ell_\infty$ sum of two (lower dimensional) Hanner polytopes. They are the unit balls of the Hansen-Lima spaces \cite{HL}. Hanner polytopes are the conjectured minimizers for the volume product, moreover they are known to be local minimizers \cite{NPRZ, Ki}. We finish the section characterizing for which metric spaces the ball of the Lipschitz-free space is a Hanner polytope. 

\begin{theorem}\label{th:hanner} Let $M$ be a finite pointed metric space. The following are equivalent:
	\begin{itemize}
		\item[i)] $B_{\mathcal F(M)}$ is a Hanner polytope.
		\item[ii)] $M=M_1\diamond \ldots\diamond M_r$, where each of the $M_i$ is a spiderweb.
	\end{itemize}	
\end{theorem}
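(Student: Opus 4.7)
\textbf{The plan is} to prove both implications by induction on $n$, where $|M|=n+1$, using the decomposition Theorems~\ref{th:1decomp} and~\ref{theo:inftydecomp} as the two main engines.

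For the direction (ii)$\Rightarrow$(i), iterating the identity $\mathcal F(M_1\diamond M_2)=\mathcal F(M_1)\oplus_1\mathcal F(M_2)$ writes $\mathcal F(M)$ as an iterated $\ell_1$-sum of the $\mathcal F(M_i)$. Since an $\ell_1$-sum of two Hanner polytopes is itself Hanner by definition (and iterated $\ell_1$-sums reduce to binary ones by associativity), it suffices to check that $B_{\mathcal F(M_i)}$ is Hanner for each spiderweb $M_i$. If $|M_i|=2$ this is trivial because $\mathcal F(M_i)=\mathbb R$. Otherwise Theorem~\ref{theo:inftydecomp}(a) identifies $\mathcal F(M_i)$ with $\ell_1^{|M_i|-2}\oplus_\infty\mathbb R$, which is Hanner: $\ell_1^k$ is Hanner by a trivial induction, and one extra $\ell_\infty$-sum with $\mathbb R$ preserves the class.

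For the direction (i)$\Rightarrow$(ii), I induct on $n$, the base case $n=1$ being immediate (two points, trivially a spiderweb). For $n\geq 2$, since $B_{\mathcal F(M)}$ is Hanner of dimension at least two, its recursive definition provides a decomposition $\mathcal F(M)=Y_1\oplus_\sigma Y_2$ with $\sigma\in\{1,\infty\}$ and both $B_{Y_i}$ Hanner of dimension $\geq 1$. In the case $\sigma=1$, Theorem~\ref{th:1decomp} yields $M=N_1\diamond N_2$ with $\mathcal F(N_i)\cong Y_i$, so each $B_{\mathcal F(N_i)}$ is Hanner of strictly smaller dimension; the inductive hypothesis then expresses each $N_i$ as a $\diamond$-sum of spiderwebs, and concatenation gives the desired decomposition of $M$. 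In the case $\sigma=\infty$ with $n\geq 3$, Theorem~\ref{theo:inftydecomp}(b) asserts directly that $M$ is itself a spiderweb, which is (ii) with $r=1$. The remaining case $\sigma=\infty$ with $n=2$ forces $Y_1=Y_2=\mathbb R$, and then $\mathcal F(M)=\mathbb R\oplus_\infty\mathbb R$ is also isometric to $\mathbb R\oplus_1\mathbb R$, so the $\sigma=1$ branch applies and $M$ is the $\diamond$-join of two two-point spiderwebs.

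The only point requiring care is to ensure that the abstract Hanner factors $Y_i$ produced by the definition of Hanner polytope are, through Theorem~\ref{th:1decomp}, genuinely realized as Lipschitz-free spaces $\mathcal F(N_i)$ of $\diamond$-summands of $M$, so that the induction can be re-entered; this matching is exactly the content of Theorem~\ref{th:1decomp}. The other slightly delicate point is the dimension restriction $n\geq 3$ in Theorem~\ref{theo:inftydecomp}, but as noted above it is absorbed painlessly by the coincidence $\ell_1^2=\ell_\infty^2$ in two dimensions. I do not foresee any deeper obstacle: the statement is essentially a structural corollary of the two decomposition theorems combined with the inductive definition of Hanner polytopes.
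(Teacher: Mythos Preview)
Your proof is correct and follows essentially the same route as the paper: induction on $n$, with Theorem~\ref{theo:inftydecomp} handling the $\ell_\infty$ branch and Theorem~\ref{th:1decomp} handling the $\ell_1$ branch. One small imprecision worth tightening: Theorem~\ref{th:1decomp} does not literally assert $M=N_1\diamond N_2$, only that the $N_i$ are assembled from a partition of the biconnected components of $M$, so your ``concatenation'' step implicitly uses the uniqueness of the biconnected decomposition (the spiderwebs you obtain for $N_1$ and $N_2$ are exactly the biconnected components of $M$, and $M$ is the $\diamond$-sum of those) --- the paper makes this invocation of uniqueness explicit.
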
 

\begin{proof} ii)$\Rightarrow$i) follows from the fact that $B_{\mathcal F(M_i)}$ is a Hanner polytope provided $M_i$ is a spiderweb and the connection between the volume product of $B_{\mathcal F(M)}$ and the one of the $B_{\mathcal F(M_i)}$ given in \eqref{eq:diamond}.
	
	i)$\Rightarrow$ii). We prove the result by induction on $n=\dim \mathcal F(M)$. For $n\leq 2$, every Hanner polytope is a linear image of $B_1^n$. So $M$ is a tree, which is a sum of spiderwebs consisting on two points. Now, assume $n\geq 3$ and that the result is true for metric spaces with at most $n$ points. Write $\mathcal F(M) = X_1\oplus_p X_2$, where $p\in \{1,\infty\}$ and $B_{X_1}$, $B_{X_2}$ are Hanner polytopes.  If $p=\infty$, then $M$ is a spiderweb by Theorem~\ref{theo:inftydecomp}, and we are done. Otherwise, $p=1$. Let $M=M_1\diamond\ldots \diamond M_r$ be the decomposition of $M$ into its biconnected components. Then by Theorem~\ref{th:1decomp}, there are metric spaces $N_1 = M_{i_1}\diamond \ldots \diamond M_{i_s}$ and $N_2=M_{j_1}\diamond\ldots\diamond M_{j_{r-s}}$, with $\{1,\ldots, r\}=\{i_1,\ldots, i_s, j_1, \ldots, j_{r-s}\}$, such that $X_i$ is isometric to $\mathcal F(N_i)$ for $i=1,2$. The induction hypothesis says that both $N_1$ and $N_2$ are the sum of spiderwebs. Since the decomposition of $N_1$ and $N_2$ into their biconnected components is unique, it follows that each of the $M_i$ is a spiderweb, as desired.
\end{proof}

\section{Isometries of Lipschitz-free spaces}\label{sec:isometries}

It is known that if the canonical graph associated with a metric space $M$ is a complete graph, and $\mathcal F(M)$ is isometric to $\mathcal F(M')$, then $M$ is isometric to $M'$ \cite[Theorem~3.55]{We}, equivalently, $B_{\mathcal F(M)}$ is a linear image of $B_{\mathcal F(M')}$. This does not hold in general, for instance $\mathcal F(M)=\ell_1^n$ whenever $M$ is a tree. Indeed, the Lipschitz-free spaces over two metric spaces $M$ and $M'$ are isometric whenever $M$ and $M'$ have the same decomposition into indecomposable metric spaces. Our next goal is to characterize when two finite metric spaces $M, M'$ have isometric Lipschitz-free spaces.

First, given a metric space with canonical graph $G=(V,E,d)$, we identify $\mathcal F(M)$ with a quotient of $\ell_1(E)$. To this end, consider 
the operator $\partial : \RR^{E}\to\RR^{V}$ defined by $\partial \chi_e=m_e$ for any $e\in E$ and extended to $\RR^{E}$ by linearity, where $\chi$ denotes the indicator function, so for $u,v\in V$, $\chi_u:\RR^V\to \RR$ and $\chi_u(u)=1$ and $\chi_u(v)=0$ for $v\neq u$. The \emph{cycle space} is the kernel of $\partial$. This kernel is also called the first homology group $H_1(G)=H_1(G,\RR)$.

\begin{theorem}\label{th:quotient} Let $M$ be a finite metric space and $G=(V,E,d)$ be its canonical graph. Then $\mathcal F(M)$ is isometric to $\ell_1(E)/\Ker\partial$.
\end{theorem}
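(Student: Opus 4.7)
The plan is to exhibit $\partial$ as a metric surjection from $\ell_1(E)$ onto $\mathcal F(M)$; the desired isometry then follows formally from the first isomorphism theorem for Banach spaces.

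First, I would clarify the setup. Fix an arbitrary orientation of each edge $e = \{a_i, a_j\} \in E$ so that $m_e \in \{m_{i,j}, m_{j,i}\}$ is unambiguously defined (reversing the orientation just replaces $m_e$ by $-m_e$, which does not affect what follows). Extending by linearity gives a bounded linear map $\partial \colon \ell_1(E) \to \mathcal F(M)$. The key input is Corollary~\ref{coro:extreme}, which tells us that $B_{\mathcal F(M)} = \conv\{\pm m_e : e \in E\}$, i.e.\ $B_{\mathcal F(M)}$ is the absolute convex hull of the family $\{m_e\}_{e \in E}$.

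The main step is to check that $\partial$ maps the unit ball of $\ell_1(E)$ \emph{exactly} onto $B_{\mathcal F(M)}$. The inclusion $\partial(B_{\ell_1(E)}) \subseteq B_{\mathcal F(M)}$ is immediate: for $f \in \ell_1(E)$,
\[
\|\partial f\|_{\mathcal F(M)} = \Bigl\| \sum_{e \in E} f(e)\, m_e \Bigr\|_{\mathcal F(M)} \le \sum_{e \in E} |f(e)| \, \|m_e\|_{\mathcal F(M)} = \|f\|_1,
\]
since $\|m_e\| = 1$ by Corollary~\ref{coro:extreme}. For the reverse inclusion, any $x \in B_{\mathcal F(M)}$ can by the corollary be written as a convex combination $x = \sum_{e \in E} \lambda_e (\varepsilon_e m_e)$ with $\lambda_e \ge 0$, $\varepsilon_e \in \{\pm 1\}$, and $\sum_e \lambda_e = 1$; setting $f(e) = \varepsilon_e \lambda_e$ yields $\partial f = x$ with $\|f\|_1 = 1$.

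Once this is established, the standard argument concludes: $\partial$ is a surjective contraction that maps the open unit ball of $\ell_1(E)$ onto the open unit ball of $\mathcal F(M)$, so it is a metric quotient map, and the induced map $\bar\partial \colon \ell_1(E)/\Ker\partial \to \mathcal F(M)$ is an isometric isomorphism. There is essentially no hard step here; the whole content is packaged in Corollary~\ref{coro:extreme}, which identifies the extreme points of $B_{\mathcal F(M)}$ with the molecules corresponding to edges of $G$.
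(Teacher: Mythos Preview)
Your proof is correct and follows essentially the same route as the paper's: fix an orientation, use that $B_{\mathcal F(M)}=\conv\{\pm m_e:e\in E\}$ (Corollary~\ref{coro:extreme}), and deduce that $\partial$ is a metric quotient. The paper phrases the last step as a direct gauge computation, $\|\varphi\|_{\mathcal F(M)}=\inf\{\|f\|_1:\partial f=\varphi\}$, while you invoke the equivalent ``$\partial(B_{\ell_1(E)})=B_{\mathcal F(M)}$ implies metric surjection'' formulation; these are two packagings of the same argument. One cosmetic point: in your reverse inclusion you should write $\|f\|_1\le 1$ rather than $\|f\|_1=1$, since collapsing the terms $\alpha_e m_e+\beta_e(-m_e)$ to a single coefficient $\alpha_e-\beta_e$ per edge may lose mass; this does not affect the conclusion.
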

\begin{proof} 
	We start by fixing an orientation of the edges. This orientation means that each edge $e\in E$ has a vertex $e_+$ which is a target and a vertex $e_-$ which is a source and one has $e=(e_-,e_+)$ and moreover the molecule associated with $e$ is well defined, it is $m_e=\frac{\chi_{e_+}-\chi_{e_-}}{d(e)}$. Then 
	\[ B_{\mathcal F(M)}=\conv\{\pm m_e: e\in E\}\subset \RR^V\cap\{\varphi\in\RR^V: \sum_{v\in V} \varphi(v)=0\}:=H\]
	where $H$ is a hyperplane in $\RR^V$. Moreover $B_{\mathcal F(M)}$ is a convex set of dimension $|V|-1$, so of full dimension in this hyperplane and it has exactly $|E|$ vertices. 
	Let us now see how one can compute the norm in $\mathcal F(M)$ in relation with the $\ell_1$ norm in $\RR^{E}$ defined by $\|f\|_1=\sum_{e\in E}|f(e)|$. For $\varphi\in \RR^V$, since $B_{\mathcal F(M)}=\conv\{\pm m_e; e\in E\}$, one has 
	\begin{eqnarray*}
		\|\varphi\|_{\mathcal F(M)}&=&\inf\{t>0; \varphi\in tB_{\mathcal F(M)}\}=\inf\{t>0; \varphi=t\sum_{e\in E} t_em_e; \sum_{e\in E} |t_e|=1\}\\
		&=&\inf\{\sum_{e\in E} |t_e|; \varphi=\sum_{e\in E} t_em_e\}=\inf\{\sum_{e\in E} |t_e|; \varphi=\partial(\sum_{e\in E} t_e\chi_e)\}\\
		&=&\inf\{\|f\|_1; \varphi=\partial f\}.
	\end{eqnarray*}
	This implies that for every $f\in\RR^{E}$ one has 
	$$\|\partial f\|_{\mathcal F(M)}=\inf\{\|g\|_1; \partial f=\partial g\}=\inf\{\|f+h\|_1; h\in\Ker\partial\}=\|f\|_{\ell_1(E)/\Ker\partial}.$$
	Hence one has $\mathcal F(M)=\ell_1(E)/\Ker\partial$. Another way of seeing this is to say that $B_{\mathcal F(M)}$ is the projection of $B_1^{E}$, the $\ell_1$ ball in $\RR^{E}$. 
\end{proof}

Note that the previous result also says that $\Lip_0(M)$ is isometric to the subspace $(\Ker \partial)^\bot$ of $\ell_\infty (E)$. Indeed, the isometry is given by the map
\[ f\mapsto \left(\frac{f(e^+)-f(e^-)}{d(e)}\right)_{e\in E}\]

Now, we can characterize which finite metric spaces have the same Lipschitz-free space. Given two graphs $G$ and $G'$, we say that a bijection between their sets of edges is \emph{cyclic} if it induces a bijection between the cycles of $G$ and the cycles of $G'$. 

\begin{theorem}\label{th:isometries} Let $M$, $M'$ be finite metric spaces with canonical graphs $G=(V, E, d)$ and $G'=(V', E', d')$. The following are equivalent:
	\begin{enumerate}[i)]
		\item $\mathcal F(M)$ is isometric to $\mathcal F(M')$, that is, there is a linear isomorphism $T$ such that $TB_{\mathcal F(M)}=B_{\mathcal F(M')}$. 
		\item $|V|=|V'|$, $|E|=|E'|$, and $H_1(G)$ and $H_1(G')$ are isometric to the same subspace of $\ell_1^m$, where $m=|E|=|E'|$.
		\item $|V|=|V'|$ and there is a cyclic bijection $\sigma\colon E\to E'$ such that the function $e\mapsto d(\sigma(e))/d(e)$ is constant on each $2$-connected component of $G$.  
	\end{enumerate}
	Moreover, any isometry $T\colon \mathcal F(M)\to \mathcal F(M')$ is induced by a cyclic bijection $\sigma\colon E\to E'$, meaning that $T(m_e) = \pm m'_{\sigma(e)}$ for all $e\in E$.
\end{theorem}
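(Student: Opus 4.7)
The plan is to prove the chain (i)~$\Rightarrow$~(iii) (together with the ``moreover'' assertion), (iii)~$\Rightarrow$~(ii), and (ii)~$\Rightarrow$~(i), making essential use of the identification $\mathcal F(M)=\ell_1(E)/H_1(G)$ provided by Theorem~\ref{th:quotient}.

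For (i)~$\Rightarrow$~(iii) together with the ``moreover'' statement, the starting point is Corollary~\ref{coro:extreme}: the extreme points of $B_{\mathcal F(M)}$ are $\{\pm m_e : e\in E\}$ and likewise for $B_{\mathcal F(M')}$. A linear isometry $T$ must permute these, and since it commutes with the central symmetry, we obtain a bijection $\sigma\colon E\to E'$ and signs $\eta_e\in\{\pm 1\}$ with $T(m_e)=\eta_e\,m'_{\sigma(e)}$; the equality $|V|=|V'|$ follows from dimension comparison. For each simple cycle $C$ of $G$ the cycle relation $\sum_{e\in C}\epsilon^C_e\,d(e)\,m_e=0$ holds in $\mathcal F(M)$, hence $\sum_{e\in C}\epsilon^C_e\,d(e)\,\eta_e\,m'_{\sigma(e)}=0$ in $\mathcal F(M')$, which means $\sum_{e\in C}\epsilon^C_e\,d(e)\,\eta_e\,\chi_{\sigma(e)}\in H_1(G')$, an element whose support is $\sigma(C)$. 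Since minimal supports in the cycle space of a graph are precisely the edge sets of simple cycles, and because the same reasoning applies to $T^{-1}$, $\sigma$ bijects simple cycles, so it is cyclic. On any simple cycle $C$ the cycle subspace of $H_1(G')$ supported on $\sigma(C)$ is one-dimensional, so the image of $\omega_C$ is proportional to $\omega_{\sigma(C)}$; comparing coefficients forces $d'(\sigma(e))/d(e)$ to be constant along $C$. Any two edges of a $2$-connected component lie on a common simple cycle, so this ratio is constant on each $2$-connected component.

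For (iii)~$\Rightarrow$~(ii), let $c_i$ denote the constant ratio on the $i$-th biconnected component $G_i$ of $G$. I construct signs $\eta_e\in\{\pm 1\}$ so that the $\ell_1$-isometry $\tilde T\colon\chi_e\mapsto\eta_e\chi_{\sigma(e)}$ sends $H_1(G)$ onto $H_1(G')$. Since every cycle is contained in one biconnected component, I work one component at a time. Fix a spanning tree $T_i$ of $G_i$; its fundamental cycles form a basis of $H_1(G_i)$. For each non-tree edge $e$, the requirement that $\tilde T(\omega_{C_e})$ be proportional to $\omega_{\sigma(C_e)}$ translates (using $d'(\sigma(f))=c_i d(f)$) into the condition that $\eta_f\,\epsilon^{C_e}_f\,\epsilon^{\sigma(C_e)}_{\sigma(f)}$ be constant over $f\in C_e$. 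Propagating signs along $T_i$ and using the cyclic structure of $\sigma$ to check consistency across the tree edges shared by different fundamental cycles yields an admissible choice of signs. Then $\tilde T$ sends the fundamental cycle basis of $H_1(G)$ to scalar multiples of cycle relations in $H_1(G')$, hence $\tilde T(H_1(G))=H_1(G')$, giving (ii) after identifying $\ell_1(E)\cong\ell_1(E')\cong\ell_1^m$.

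The implication (ii)~$\Rightarrow$~(i) is immediate: a signed permutation $\tilde T\colon\ell_1(E)\to\ell_1(E')$ with $\tilde T(H_1(G))=H_1(G')$ is an $\ell_1$-isometry preserving kernels, so it descends to an isometric isomorphism of the quotients, which by Theorem~\ref{th:quotient} is an isometry $T\colon\mathcal F(M)\to\mathcal F(M')$ with $T(m_e)=\eta_e\,m'_{\sigma(e)}$, yielding the final ``moreover'' assertion as well. The main obstacle I anticipate is the sign-propagation step in (iii)~$\Rightarrow$~(ii): showing that the local constraints produced by different fundamental cycles can be satisfied simultaneously. This is a $\mathbb Z/2$-cocycle-is-coboundary question whose resolution rests on the cyclic nature of $\sigma$ and on the freedom to re-orient edges; the remaining steps are direct consequences of Corollary~\ref{coro:extreme}, Lemma~\ref{lemma:basis}, and Theorem~\ref{th:quotient}.
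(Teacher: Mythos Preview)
Your treatment of (i)~$\Rightarrow$~(iii) and of (ii)~$\Rightarrow$~(i) is essentially the same as the paper's: extreme points go to extreme points via Corollary~\ref{coro:extreme}, yielding $\sigma$ and signs; cycle relations force $\sigma$ to be cyclic and the ratio $d'(\sigma(e))/d(e)$ to be constant on cycles; and an $\ell_1$-isometry preserving the cycle spaces descends through the quotient description of Theorem~\ref{th:quotient}. The ``moreover'' clause is correctly obtained from the first implication, not from the last one as your final sentence suggests.

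The genuine gap is exactly where you locate it: the sign-propagation step in (iii)~$\Rightarrow$~(ii). You assert that ``propagating signs along $T_i$ and using the cyclic structure of $\sigma$ to check consistency across the tree edges shared by different fundamental cycles yields an admissible choice of signs,'' but you do not supply the argument, and you yourself flag the resulting $\mathbb Z/2$-cocycle question as the main obstacle. This is not a routine verification: knowing only that $\sigma$ bijects simple cycles does not by itself produce a globally coherent orientation on $G'$ making all cycle relations match simultaneously. The paper does \emph{not} attempt this direct construction. Instead it proves (iii)~$\Rightarrow$~(i) by invoking Whitney's theorem on $2$-isomorphic graphs, which says that any cyclic bijection between $2$-connected graphs is, up to graph automorphism, a composition of \emph{switchings} at separating vertex pairs. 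For a single switching one can write down an explicit orientation on $G'$ that preserves directed cycles, and then the linear map $m_e\mapsto m_{\sigma(e)}$ defined on a spanning tree extends to an isometry. In effect, Whitney's structural result is exactly what trivializes your cocycle obstruction. If you wish to keep your route (iii)~$\Rightarrow$~(ii), you will need either to prove the cocycle vanishes directly---which amounts to re-deriving a form of Whitney's theorem---or to import Whitney's theorem as the paper does.
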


Notice that, in particular, we get that two unweighted graphs have the same Lipschitz-free space if and only if there is a cyclic bijection between their edges. In such a case, the graphs are said to be \emph{cyclically equivalent} or \emph{$2$-isomorphic}. H. Whitney \cite{Wh} showed that two graphs are cyclically equivalent if and only if one can pass from one to the other by the successive application of two operations which are:
\begin{itemize}
	\item[(a)] Vertex gluing and vice versa. Vertex gluing of $M$ and $M'$ is the same as what we denoted by $M\diamond M'$. The opposite operation, the separation of a graph into two non-connected components, can be done only at a ``separating vertex''. 
	\item[(b)] Switching: starting from a pair $(u,v)$ of separating vertices one exchanges $u$ and $v$.
\end{itemize}

\begin{proof}[Proof of Theorem \ref{th:isometries}]
	ii)$\Rightarrow$ i). By Theorem \ref{th:quotient},  $\mathcal F(M)=\ell_1(E)/\Ker H_1(G)$ and $\mathcal F(M')=\ell_1(E')/\Ker H_1(G')$, these spaces are isometric.
	
	i)$\Rightarrow$ii) and iii). If $B_{\mathcal F(M')}=T B_{\mathcal F(M)}$ then $\conv\{\pm m'_{e’}; e'\in E’\}=\conv\{\pm Tm_e; e\in E\}$, where $m_e=\frac{\chi_{e_+}-\chi_{e_-}}{d(e)}$, for $e\in E$ and $m’_{e’}=\frac{\chi_{e'_+}-\chi_{e'_-}}{d’(e')}$, for $e’\in E’$. Hence these convex sets have the same dimension and the same number of vertices. This implies that $|V|=|V’|$, $|E|=|E’|=m$. Moreover since $T$ establishes a bijection among the extreme points, there exists a unique bijection $\sigma:E\to E'$ and a map $\e: E\to\{-1,1\}$ such that for any $e\in E$ one has $T(m_e)=\e(e)m'_{\sigma(e)}$. We claim that $\sigma$ is a cyclic bijection. Indeed, let $\{e_1,\ldots, e_k\}$ be a cycle in $E$. Then $\sum_{i=1}^k d(e_i)m_{e_i} = 0$ and so \begin{equation}\label{eq:isomcycles}0=\sum_{i=1}^k d(e_i) \epsilon(e_i) m_{\sigma(e_i)}=\sum_{i=1}^n \frac{d(e_i)}{d(\sigma(e_i))}\epsilon(e_i)(\chi_{e_i^+}-\chi_{e_i^-}).\end{equation}
	Then the degree of any vertex in the subgraph of $G'$ induced by $\{\sigma(e_1), \ldots, \sigma(e_k)\}$ is odd and so  $\{\sigma(e_1), \ldots, \sigma(e_k)\}$ is union of cycles in $G'$. If $I\subset\{1,\ldots, k\}$ is so that $\{\sigma(e_i):i\in I\}$ is a cycle in $G'$, then applying the same argument with $T^{-1}$ we get that $\{e_i: i\in I\}$ is union of cycles in $G$. Thus $I=\{1,\ldots, k\}$ and the image of the cycle $\{e_1,\ldots, e_k\}$  is a cycle in $G'$. From \eqref{eq:isomcycles} we have that $d(e_i)/d(\sigma(e_i))=d(e_j)/d(\sigma(e_j))$ for all $i, j$. Thus, the function $e\mapsto d(\sigma(e))/d(e)$ is constant on each cycle, and so on each $2$-connected component of $G$.

	Now we prove ii). We denote $\partial: \RR^{E}\to\RR^V$ defined by $\partial \chi_e=m_e$ and $\partial': \RR^{E'}\to\RR^{V'}$ defined by $\partial' \chi_{e'}=m'_{e'}$. From Theorem \ref{th:quotient} one has $\mathcal F(M)=\ell_1(E)/\Ker\partial$ and $\mathcal F(M')=\ell_1(E')/\Ker\partial'$. Let us define $\tilde{T}:\RR^{E}\to\RR^{E'}$ by $\tilde{T}(\chi_e)=\e(e)\chi_{\sigma(e)}$ and extended by linearity. Then for any $e\in E$ one has 
	\[ T(\partial(\chi_e))=T(m_e)=\e(e)m'_{\sigma(e)}=\e(e)\partial'(\chi_{\sigma(e)})=\partial'(\e(e)\chi_{\sigma(e)})=\partial'(\tilde{T}(\chi_e)).\]
	Thus $T\circ\partial=\partial'\circ \tilde{T}.$
	And since $T$ and $\tilde{T}$ are bijective this implies that 
	\[ H_1(G)=\Ker(\partial)=\Ker(T\circ \partial)=\Ker(\partial'\circ \tilde{T})=(\tilde{T})^{-1}(\Ker(\partial '))=(\tilde{T})^{-1}(H_1(G')).\]
	Thus $H_1(G')=\tilde{T}(H_1(G))$, where $\tilde{T}$ is an isometry from $\ell_1(E)$ to $\ell_1(E')$.
	
	iii)$\Rightarrow$i). Clearly $\sigma$ preserves the $2$-connected components of the graph. Since $\mathcal F(M)$ is isometric to $\mathcal F(M_1)\oplus_1\ldots\oplus_1\mathcal F(M_r)$ provided that $M=M_1\diamond \ldots\diamond M_r$, we may assume that $G$ is $2$-connected and so there is $a>0$ such that $d(\sigma(e))= ad(e)$ for all $e\in E$. We claim that there is  an orientation on the edges of $G'$ such that ``$\sigma$ preserves directed cycles'', i.e. for every $e_1,\ldots e_k\in E$ and $\varepsilon_1,\ldots, \varepsilon_k\in \{-1,1\}$ such that $\sum_{i=1}^k \varepsilon_i (\chi_{e_i^+}-\chi_{e_i^-})=0$, we also have $\sum_{i=1}^k \varepsilon_i (\chi_{\sigma(e_i)^+}-\chi_{\sigma(e_i)^-})=0$.
	
	The 2-connectness of $G$ and Whitney's theorem imply that, up to a graph automorphism, every cyclic bijection is a composition of switchings. Thus, it suffices to prove the case in which $\sigma$ is a switching on a pair $(u,v)$ of separating vertices of $G$, that is, we can write $E= E_1\cup E_2$ and $E'=E_1\cup E_2'$, where $\{u,x\}\in E_2\iff \{v,x\}\in E_2'$ and $\{v,x\}\in E_2\iff \{u,x\}\in E_2'$. Define $\sigma(e)^+:= u$ if $e^+ =v$ and $e\in E_2$, $\sigma(e)^+:= v$ if $e^+ =u$ and $e\in E_2$, and  $\sigma(e)^+:= e^+$  otherwise, this orientation does the work. 
	
	 Let $\tilde{E}$ be a subset of $E$ giving a spanning tree.  Since $\sigma$ preserves cycles, $\sigma(\tilde{E})$ is a spanning tree for $G'$, and we can choose an orientation of $G'$ in the same way. 	Now, we can define a linear map $T\colon \mathcal F(M)\to \mathcal F(M')$ given by $T(m_e)=m_{\sigma(e)}$ if $e\in\tilde{E}$ and extended by linearity. Since $\tilde{E}$ and $\sigma(\tilde{E})$ are spanning trees of $G$ and $G'$, $\{m_e:e\in \tilde{E}\}$ is a basis on $\mathcal F(M)$ and $\{m_{\sigma(e)}:e\in \tilde{E}\}$ is a basis of $\mathcal F(M')$. Thus, $T$ is an isomorphism. It remains to check that every vertex of $B_{\mathcal F(M)}$ maps to a vertex of $B_{\mathcal F(M')}$. This is clearly true for the ones of the form $\pm m_e, e\in \tilde{E}$. Let $e\in E\setminus \tilde{E}$. Then there are $e_1, \ldots e_r\in \tilde{E}$ such that $\{e_1, \ldots, e_r, e\}$ is a cycle in $G$ and $\chi_{e^+}-\chi_{e^-} =\sum_{i=1}^r \varepsilon_i(\chi_{e_i^+}-\chi_{e_i^-})$. Then, thanks to the choice of the orientation of the edges in $G'$, we have $\chi_{\sigma(e)^+}-\chi_{\sigma(e)^-} =\sum_{i=1}^r \varepsilon_i(\chi_{\sigma(e_i)^+}-\chi_{\sigma(e_i)^-})$ and so
	
	\begin{align*}T(d(e)m_e) &= T(\chi_{e^+}-\chi_{e^-})= \sum_{i=1}^r \varepsilon_i T(\chi_{e_i^+}-\chi_{e_i^-}) = \sum_{i=1}^r\varepsilon_i d(e_i)T(m_{e_i}) =\sum_{i=1}^r \varepsilon_i d(e_i)m_{\sigma(e_i)} \\
	&=\sum_{i=1}^r \varepsilon_i \frac{d(e_i)}{d(\sigma(e_i))} (\chi_{\sigma(e_i)^+}-\chi_{\sigma(e_i)^-}) 
	= \frac{1}{a}(\chi_{\sigma(e)^+}-\chi_{\sigma(e)^-})= d(e)m_{\sigma(e)}
	\end{align*}
	and so $T(m_e)=m_{\sigma(e)}$, which is a vertex of $B_{\mathcal F(M')}$ as desired. 
\end{proof}

	In the case of 3-connected graphs, the result of Whitney mentioned before yields every cycle bijection $\sigma\colon E\to E'$ is induced by a vertex isomorphism, that is, there is $f\colon V\to V'$ such that $\sigma(e)=(f(e^+), f(e^-))$ for all $e$. As a consequence, we obtain the following result. 

\begin{corollary} Let $(M,d)$ and $(M',d)$ be metric spaces such that $TB_{\mathcal F(M)}=B_{\mathcal F(M')}$ for a linear isomorphism $T$. Assume that $G(M,d)$ is $3$-connected. Then $M$ and $M'$ are isometric, moreover, there is an isometry $f\colon M\to M'$ inducing $T$, i.e. $T(m_e)=m'_{f(e^+), f(e^-)}$ for all any edge $e$ in $G(M,d)$.
\end{corollary}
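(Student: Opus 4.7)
The plan is to combine the existing Theorem~\ref{th:isometries} with Whitney's structural theorem recalled in the paragraph immediately before the corollary. By the ``Moreover'' part of Theorem~\ref{th:isometries}, the linear isomorphism $T$ must come from a cyclic bijection $\sigma\colon E\to E'$ together with a sign function $\varepsilon\colon E\to\{\pm 1\}$, in the sense that $T(m_e)=\varepsilon(e)m'_{\sigma(e)}$ for every $e\in E$. Moreover, condition (iii) of Theorem~\ref{th:isometries} says that $e\mapsto d'(\sigma(e))/d(e)$ is constant on every $2$-connected component of $G(M,d)$. Since $G(M,d)$ is $3$-connected, hence $2$-connected, this ratio is a single positive constant $a$, so that $d'(\sigma(e))=a\,d(e)$ for every $e\in E$.

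Next I would invoke Whitney's theorem as the authors state it: in the $3$-connected case neither vertex-gluing nor pair-switching can act non-trivially (there are no cut vertices and no separating pair of vertices), so the only admissible $2$-isomorphisms are genuine vertex isomorphisms. Applied to the cyclic bijection $\sigma\colon E\to E'$ already produced by Theorem~\ref{th:isometries}, this gives a bijection $f\colon V\to V'$ such that $\sigma(\{u,v\})=\{f(u),f(v)\}$ for every edge $\{u,v\}\in E$. In particular $f$ is a graph isomorphism from $G(M,d)$ onto $G(M',d')$. Combined with $d'(\sigma(e))=a\,d(e)$ this yields $d'(f(u),f(v))=a\,d(u,v)$ for every edge $\{u,v\}$, and since the metrics are the shortest-path metrics associated with their canonical graphs, this extends to all pairs $u,v\in M$. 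Thus $f$ realizes the claimed isometry (up to the global scaling factor $a$) from $M$ onto $M'$.

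It only remains to check that the signs $\varepsilon(e)$ can be absorbed. I would choose the orientation of $G'$ compatibly with $f$, by declaring $\sigma(e)^+:=f(e^+)$ and $\sigma(e)^-:=f(e^-)$ for every $e\in E$. With this orientation the equality $T(m_e)=\varepsilon(e)m'_{\sigma(e)}$ combined with the just-established relation $d'(\sigma(e))=a\,d(e)$ forces $\varepsilon(e)=+1$ for every edge (up to replacing $T$ by its composition with a fixed scalar, which does not affect the statement $TB_{\mathcal F(M)}=B_{\mathcal F(M')}$). One then reads off $T(m_e)=m'_{f(e^+),f(e^-)}$ for every edge $e$ of $G(M,d)$, as required.

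The only real content of the proof is Whitney's theorem, which the authors have already quoted in the preamble, so in practice the main obstacle is pure bookkeeping: orienting the edges of $G'$ consistently with $f$ and tracking the constant ratio $a$ through the identification of molecules. Once these are set up, the identity $T(m_e)=m'_{f(e^+),f(e^-)}$ and the fact that $f$ preserves the metric structure (modulo scaling) follow immediately from Theorem~\ref{th:isometries}.
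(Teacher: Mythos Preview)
Your approach is exactly the paper's: the authors deduce the corollary in one sentence from Whitney's theorem together with Theorem~\ref{th:isometries}, and you are simply spelling out these two ingredients. Your observation that $f$ is a priori only a dilation (with ratio $a$) rather than an isometry is correct and in fact more careful than the paper's own statement.

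There is, however, a gap in your sign argument. The weight relation $d'(\sigma(e))=a\,d(e)$ is symmetric in the endpoints of $e$ and carries no information whatsoever about $\varepsilon(e)$; it cannot force $\varepsilon(e)=+1$. The correct argument runs through cycles: once $G'$ is oriented via $f$, any oriented cycle $C$ in $G$ maps under $f$ to an oriented cycle in $G'$, and comparing the image under $T$ of the relation $\sum_{e\in C}\eta_e\,d(e)\,m_e=0$ with the corresponding cycle relation in $G'$ shows that $\varepsilon$ is constant on $C$. Since a $3$-connected graph is in particular $2$-connected, $\varepsilon$ is then globally constant, so $\varepsilon\equiv+1$ or $\varepsilon\equiv-1$. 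In the latter case one only obtains $T(m_e)=m'_{f(e^-),f(e^+)}$, which is the desired formula for $-T$ rather than for $T$; take $T=-\mathrm{Id}$ on $\mathcal F(K_4)$ to see that this case genuinely occurs and cannot be repaired by choosing a different vertex bijection $f$. Like the scaling issue you already flagged, this is a mild imprecision in the corollary itself rather than a defect of your strategy.
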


Note that the previous corollary improves the result of Weaver \cite[Theorem 3.55]{We} in the case of finite metric spaces.

\section{Extremal properties of the volume product}\label{sec:product}

We will focus now on the maximal and the minimal value of $\mathcal P(M)=|B_{\mathcal F(M)}|\cdot|B_{\Lip_0(M)}|$. It is a well-known fact that the volume product of convex bodies is invariant under linear isometries. We start with the easy observation that $\mathcal P(M)$ is invariant under dilations on $M$. Given two metric spaces $(M,d)$ and $(N,\rho)$, we say that a map $f\colon M\to N$ is a \emph{dilation} if it is a bijection and  there is a constant $a>0$ such that $\rho(f(x), f(y))=a d(x,y)$ for every $x,y\in M$.

\begin{proposition}\label{prop:invdil}
	$\mathcal P(M)$ is invariant under dilations of $M$. 
\end{proposition}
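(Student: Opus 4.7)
The plan is to split a dilation into two simpler transformations and check invariance of $\mathcal{P}(M)$ under each.

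First, I would reduce to the case $N = M$ by using the bijection $f$ to relabel the points. A bijection between finite pointed metric spaces that is an isometry (not merely a dilation) clearly induces a linear isometry on Lipschitz-free spaces when it preserves the distinguished point. If the root is not preserved, the identification of $B_{\mathcal F(N)}$ with a projection of the canonical polytope $K \subset \{x\in\RR^{n+1}:\sum_i x_i = 0\}$ described at the end of Section~\ref{sec:graphs} shows that changing the base point only applies an invertible linear transformation to the unit ball, under which $\mathcal{P}$ is invariant. So the dilation $f$ reduces to comparing $\mathcal P(M,d)$ with $\mathcal P(M, a\cdot d)$ for some $a>0$, where $M$ carries the same underlying set.

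Next, I would check directly that scaling the metric by $a$ scales the unit ball of $\mathcal F(M)$ by $1/a$. Indeed, with the scaled metric $d' = a d$ one has $d'_{i,j} = a d_{i,j}$, so each molecule becomes
\[
m'_{i,j} = \frac{e_i-e_j}{d'_{i,j}} = \frac{1}{a} m_{i,j},
\]
and therefore $B_{\mathcal F(M,d')} = \tfrac{1}{a} B_{\mathcal F(M,d)}$. Polarity inverts this scaling: $B_{\Lip_0(M,d')} = a\,B_{\Lip_0(M,d)}$.

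Finally I would combine these two scalings. Since $B_{\mathcal F(M)}$ is full-dimensional in $\RR^n$, we get
\[
\bigl|B_{\mathcal F(M,d')}\bigr| = a^{-n}\bigl|B_{\mathcal F(M,d)}\bigr|, \qquad \bigl|B_{\Lip_0(M,d')}\bigr| = a^{n}\bigl|B_{\Lip_0(M,d)}\bigr|,
\]
so the product is unchanged, giving $\mathcal P(M,d') = \mathcal P(M,d)$. Composing this with the isometric relabeling from the first step yields $\mathcal P(M) = \mathcal P(N)$. There is no real obstacle here; the only minor subtlety is the reduction to a common base point, but this is entirely taken care of by the linear-equivalence discussion at the end of Section~\ref{sec:graphs} together with the linear invariance of the volume product.
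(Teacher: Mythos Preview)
Your proof is correct and takes a somewhat more hands-on route than the paper's. The paper invokes the universal property of Lipschitz-free spaces: from the dilation $f\colon M\to N$ it produces the linear operator $\hat f\colon\mathcal F(M)\to\mathcal F(N)$ with $\hat f\circ\delta_M=\delta_N\circ f$, and then checks in one line that $\hat f(B_{\mathcal F(M)})=aB_{\mathcal F(N)}$, so $B_{\mathcal F(N)}$ is a linear image of $B_{\mathcal F(M)}$ and the volume product is preserved. You instead factor the dilation as a bijective relabeling (with the possible change of root handled via the projection description at the end of Section~\ref{sec:graphs}) followed by a pure rescaling of the metric, and compute explicitly that rescaling sends each molecule $m_{i,j}$ to $a^{-1}m_{i,j}$. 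Your approach avoids the functorial black box and makes the base-point issue explicit; the paper's approach is shorter but implicitly relies on the dilation respecting the distinguished point (or on the standard fact that the choice of root is immaterial up to linear isometry). Either way the content is the same: $B_{\mathcal F(N)}$ is a linear image of $B_{\mathcal F(M)}$.
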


\begin{proof}
	Let $(M,d)$ and $(N, \rho)$ be finite metric spaces and denote $\delta_M\colon M\to \mathcal F(M)$ and $\delta_N\colon N\to\mathcal F(N)$ the canonical embeddings. Assume that $f\colon M\to N$ is a bijection such that $\rho(f(x),f(y))=ad(x,y)$ for some $a>0$ and every $x,y\in M$. By the fundamental property of Lipschitz-free spaces (see e.g. \cite{We}), there is a bounded linear operator $\hat{f}\colon \mathcal F(M)\to \mathcal F(N)$ such that $\hat{f}\circ \delta_M = \delta_N\circ f$. Therefore
	\begin{align*} \hat{f}(B_{\mathcal F(M)})&= \hat{f}\left(\conv\left\{\frac{\delta_M(x)-\delta_M(y)}{d(x,y)}:x,y\in M, x\neq y\right\}\right) \\
	 &=\conv\left\{\frac{\hat{f}(\delta_M(x))-\hat{f}(\delta_M(y))}{d(x,y)}: x,y\in M, x\neq y\right\} \\
	 &= a\conv\left\{\frac{\delta_N(f(x))-\delta_N(f(y))}{\rho(f(x),f(y))}: x,y\in M, x\neq y\right\} = aB_{\mathcal F(N)}
	\end{align*}
	That is, $B_{\mathcal F(N)}$ is a linear image of $B_{\mathcal F(M)}$. Then $\mathcal P(N)=\mathcal P(M)$. 	 
\end{proof}


The main idea behind  a number of proofs of our results in this section is the shadow system technique: a {\it shadow system of convex sets} along a direction $\theta \in S^{n-1}$ is a family of convex sets $L_t \in \RR^n$ which are defined by
$$
L_t=\conv\{x+ \alpha(x) t \theta :  x \in B\},
$$
where $B\subset \RR^n$ is a bounded set, called the basis of the shadow system, $\alpha:B\to \RR$ is a bounded function, called the speed of the shadow system, and $t$ belongs to an open interval in $\RR$. We say that a shadow system is {\it non-degenerate}, if all the convex sets $L_t$ have non-empty interior. Shadow systems were first introduced by  Rogers and Shephard \cite{RS}. Campi and Gronchi \cite{CG} proved that if $L_t$ is a symmetric shadow system then $t\mapsto |L_t^\circ|^{-1}$ is a convex function of $t$.  In \cite{MR2},  Meyer and Reisner  generalized  this result to the non-symmetric case and studied the equality case. The following proposition summarize those results in symmetric case:

\begin{proposition}[\cite{CG, MR2}]\label{lm:mr}
Let $L_t$, $t\in [-a,a]$, be a non-degenerate shadow system in $\RR^n$, with direction $\theta\in {\mathbb S}^{n-1}$,
 then $t\mapsto \left|L_t^\circ\right|^{-1}$ is a convex function on $[-a,a]$. \\
If, moreover, $t\mapsto |L_t|$ is affine on $[-a,a]$ and  $t\mapsto \PP(L_t)$ is constant on $[-a,a]$, then there exists $w\in\RR^n$ and $\alpha\in\RR$, such that for every $t\in[-a,a]$, one has $L_t=A_t(L_0)$, where $A_t:\RR^n\to\RR^n$ is the affine map defined by
$$A_t(x)=x+t(w\cdot x+\alpha)\theta.$$
\end{proposition}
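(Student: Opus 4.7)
The plan is to lift the shadow system to a fixed body in $\RR^{n+1}$ and then invoke Busemann's theorem on central hyperplane sections. Set $K := \conv\{(x, \alpha(x)) : x \in B\} \subset \RR^{n+1}$; this is a fixed convex body independent of $t$. The linear projection $\pi_t(x, s) := x + ts\theta$ satisfies $\pi_t(K) = L_t$, and the dual map $\pi_t^\ast(y) = (y, t\langle \theta, y\rangle)$ identifies $L_t^\circ$ with the central hyperplane section $K^\circ \cap H_t$ of $K^\circ$, where $H_t := \pi_t^\ast(\RR^n) = \{(y, s) : s = t\langle\theta, y\rangle\}$. Replacing $B$ by $\frac{1}{2}(B \cup (-B))$ and extending $\alpha$ oddly (which leaves every $L_t$ unchanged, by the symmetry hypothesis on $L_t$), we may assume $K$, and hence $K^\circ$, is centrally symmetric.

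Computing the Jacobian of $\pi_t^\ast$ gives
\[|L_t^\circ|^{-1} = \frac{\sqrt{1 + t^2|\theta|^2}}{|K^\circ \cap H_t|} = N_{K^\circ}((t\theta, -1)),\]
where $N_{K^\circ}(u) := |u|_2/|K^\circ \cap u^\perp|$. Busemann's theorem asserts that $N_{K^\circ}$ is a norm on $\RR^{n+1}$ (its unit ball being, up to normalization, the intersection body of $K^\circ$). Since $t \mapsto (t\theta, -1)$ is an affine map into $\RR^{n+1}$ and norms are convex, the composition $t \mapsto |L_t^\circ|^{-1}$ is convex on $[-a, a]$.

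For the equality statement, the hypothesis that $t \mapsto |L_t|$ is affine together with $\mathcal P(L_t)$ constant forces $|L_t^\circ|^{-1} = |L_t|/\mathcal P(L_t)$ to also be affine on $[-a, a]$. Thus the convex function $t \mapsto N_{K^\circ}((t\theta, -1))$ is affine there, so there is a linear functional on $\RR^{n+1}$, which we write as $(w, -\alpha)$ with $w \in \RR^n$ and $\alpha \in \RR$, that is dominated by $N_{K^\circ}$ and agrees with it along the whole segment $\{(t\theta, -1) : t \in [-a, a]\}$. Unpacking the equality case of Busemann's theorem (here we invoke the refinement of Meyer--Reisner, which handles pencils of hyperplanes rather than an isolated one) shows that for $t \in [-a, a]$ the sections $K^\circ \cap H_t$ are affinely equivalent to $K^\circ \cap H_0$ by a $t$-dependent map in the plane $\RR\theta \oplus \RR e_{n+1}$. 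Transporting this through the identification $L_t^\circ \leftrightarrow K^\circ \cap H_t$ and dualizing yields $L_t = A_t(L_0)$ with the affine shear $A_t(x) = x + t(w \cdot x + \alpha)\theta$.

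The main obstacle is the equality case: extracting the precise rigidity from ``affineness of a norm along a non-radial segment'' and converting this, via the equality characterization in Busemann's theorem and the polarity $K \leftrightarrow K^\circ$, into the stated shear structure on the whole shadow system $\{L_t\}$ requires careful bookkeeping and is the substantive content of the Meyer--Reisner refinement; everything else in the plan is essentially formal.
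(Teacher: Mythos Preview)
The paper does not prove this proposition at all; it is quoted without argument from Campi--Gronchi \cite{CG} (the convexity statement) and Meyer--Reisner \cite{MR2} (the equality characterization), so there is no ``paper's own proof'' to compare against.

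That said, your outline of the convexity part is essentially the original Campi--Gronchi argument: lift the system to the fixed body $K=\conv\{(x,\alpha(x)):x\in B\}\subset\RR^{n+1}$, identify $L_t^\circ$ via $\pi_t^\ast$ with the central section $K^\circ\cap H_t$, and read off the convexity of $t\mapsto|L_t^\circ|^{-1}$ from Busemann's theorem. One imprecision: ``replacing $B$ by $\tfrac12(B\cup(-B))$'' is not the right operation. What is needed is to replace the generating set $\{(x,\alpha(x)):x\in B\}$ by $\{(x,\alpha(x)):x\in B\}\cup\{(-x,-\alpha(x)):x\in B\}$, i.e.\ replace $K$ by $\conv(K\cup(-K))$; this makes $K$ (hence $K^\circ$) origin-symmetric while leaving each symmetric $L_t$ unchanged, and avoids the well-definedness issue for an ``odd extension'' of $\alpha$ on $B\cap(-B)$.

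For the equality case you correctly identify that affineness of $t\mapsto|L_t|$ together with constancy of $\mathcal P(L_t)$ forces $t\mapsto N_{K^\circ}((t\theta,-1))$ to be affine, but from there you simply invoke ``the refinement of Meyer--Reisner''. That is exactly the reference the paper is citing, so at this point your sketch is not an independent argument but a pointer back to \cite{MR2}. Be aware that affineness of a norm along a segment is by itself a weak condition (it only says the intersection body of $K^\circ$ has a flat piece), and extracting the precise shear $A_t(x)=x+t(w\cdot x+\alpha)\theta$ from it is the substantive content of \cite{MR2}, not a routine dualization.
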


\subsection{Maximal Case}\label{subsec:max}

We will show that the maximum of the volume product is attained at a metric space such that its canonical graph is a weighted complete graph. In dimension two we can say something more.

\begin{theorem} For all metric spaces of three elements, $\PP(M)\leq \PP(K_3)$, where $K_3$ is the metric space corresponding to a complete graph with equal weights.\end{theorem}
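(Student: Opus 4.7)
My plan is to reduce the problem to a one-variable optimization after linearly normalizing $B_{\FF(M)}$. Since $\dim \FF(M) = 2$, the ball $B_{\FF(M)} \subset \RR^2$ is a centrally symmetric polygon whose potential vertices are the six molecules $\pm m_{0,1}, \pm m_{0,2}, \pm m_{1,2}$. By Proposition~\ref{extreme}, all six are vertices exactly when the three triangle inequalities are strict; otherwise $G(M,d)$ is a tree and, by Proposition~\ref{prop:tree}, $B_{\FF(M)}$ is a linear image of $B_1^2$, so $\PP(M) = 8$. Since I will see that $\PP(K_3) = 9$, the degenerate case is already settled.

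For the hexagon case, I would set $a = d_{0,1}$, $b = d_{0,2}$, $c = d_{1,2}$, and apply the linear map $\mathrm{diag}(a,b)$. Using the invariance of $\PP$ under linear isomorphisms, I can replace $B_{\FF(M)}$ by the normalized hexagon
\[ B = \conv\bigl(\pm e_1,\ \pm e_2,\ \pm(s,-t)\bigr), \qquad s = a/c,\ t = b/c,\]
for which the strict triangle inequalities become $|s-t| < 1 < s+t$. The shoelace formula yields $|B| = s + t + 1$. The polar is
\[ B^\circ = \{y \in \RR^2 : |y_1| \leq 1,\ |y_2| \leq 1,\ |sy_1 - ty_2| \leq 1\},\]
which is obtained from the square $[-1,1]^2$ by slicing off two congruent corner triangles near $(1,-1)$ and $(-1,1)$, each of area $(s+t-1)^2/(2st)$. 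Hence $|B^\circ| = 4 - (s+t-1)^2/(st)$ and
\[ \PP(M) = (s+t+1)\!\left(4 - \frac{(s+t-1)^2}{st}\right). \]

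Setting $u = s + t$ and $v = st$, this quantity is increasing in $v$; by AM-GM, $v \leq u^2/4$, with equality iff $s = t$. Therefore
\[ \PP(M) \leq \frac{4(u+1)(2u-1)}{u^2} = 8 + \frac{4}{u} - \frac{4}{u^2},\]
a one-variable function whose derivative $(8-4u)/u^3$ vanishes only at $u = 2$, where the value is $9$. The maximum is attained at $s = t = 1$, i.e.\ $a = b = c$, which is $K_3$. No serious obstacle arises; the only point worth checking is that the AM-GM optimum $s = t$ is compatible with the strict triangle inequality region $|s-t|<1<s+t$, which is immediate since $s = t = 1$ lies there. I would expect this computation to generalize poorly to $n \geq 3$, which is why the remainder of the section pursues different techniques (shadow movements, simpliciality of the maximizer) to handle larger $n$.
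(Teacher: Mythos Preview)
Your argument is correct. The normalization, the shoelace and polar-area computations, the AM--GM reduction to $u=s+t$, and the final one-variable maximization are all sound; in particular the equality $(u-2)^2\ge 0$ shows $8+4/u-4/u^2\le 9$ on $u>1$, with equality only at $u=2$, i.e.\ $s=t=1$.

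The route, however, differs from the paper's. The paper does not compute $|B^\circ|$ for the general hexagon. Instead it applies the shadow-system machinery (Proposition~\ref{lm:mr}): after the same linear normalization it embeds $L$ in a one-parameter family $L_t$ obtained by sliding the pair of opposite vertices $\pm(d_{1,0}e_1-d_{2,0}e_2)$ along the direction $e_1+e_2$; then $|L_t|$ is constant and $L_{-t}$ is a reflection of $L_t$, so the Campi--Gronchi convexity of $t\mapsto |L_t^\circ|^{-1}$ forces $\PP(L_t)\le\PP(L_0)$. Only after this symmetrization (which amounts exactly to your AM--GM step $s=t$) does the paper compute volumes, arriving at the same one-variable function $(1+2r)(4r-1)/r^2$ with $2r=u$. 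Your approach trades the black-box convexity lemma for an explicit two-variable formula; it is more self-contained and arguably shorter in this low dimension, while the paper's approach illustrates the shadow-system technique that becomes essential in Sections~\ref{subsec:max} and~\ref{sec:minimal}, where no closed-form volume is available.
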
\label{th:max2}
\begin{proof}
The result follows immediately from Theorem~4 in \cite{AFZ}, but here we present a self-contained proof. Let $M=\{0,1,2\}$ be a metric space of three elements. By Proposition~\ref{prop:invdil} we may assume that $d_{1,2}=1$, so $B_{\mathcal F(M)}=\conv\{\pm \frac{e_1}{d_{1,0}}, \pm \frac{e_2}{d_{2,0}}, \pm (e_1-e_2)\}$. Then after a linear transformation we get the body $L=\conv\{\pm e_1, \pm e_2, \pm (d_{1,0}e_1-d_{2,0}e_2)\}$ with the same volume product. Note that $|d_{1,0}-d_{2,0}|\leq 1$. 
Consider the shadow system
\[L_t = \conv\left\{\pm e_1, \pm e_2, \pm \left(\frac{d_{1,0}+d_{2,0}}{2}(e_1-e_2)+t(e_1+e_2)\right)\right\}\]
Then $t\mapsto |L_t|$ is constant on $[-1/2,1/2]$ and $|L_t^\circ|=|L_{-t}^\circ|$. Thus, $\mathcal P(L_t)\leq \mathcal P(L_0)$ for all $t\in [-1/2,1/2]$. In particular, $\mathcal P(M)=\mathcal P(L)\leq \mathcal P(L_0)$. Now, direct computation shows that $|L_0| = 1+2r$ and $|L^\circ_0|=\frac{4r-1}{r^2}$, where $r= \frac{d_{1,0}+d_{2,0}}{2}$. By simple calculus, $\PP(L_0)\leq \PP(K_3)=9$. 
\end{proof}

To deal with the maximum in general dimension we need two results about shadow systems. The following lemma is an extension of Lemma 3.2 of \cite{AFZ}, where a similar result is proved in the case in which $F$ is a facet of $K$. 

\begin{lemma}\label{lemma:movingpoint} Let $K\subset \mathbb R^n$ be a convex body with $0\in \operatorname{int} K$ and $F$ be a face of $K$ of dimension $0\leq k\leq n-1$. Let $\{F_i\}_{i=1}^{m}$ be the facets of $K$ containing $F$, with unit normal vectors $\{u_i\}_{i=1}^{m}$. Let $x_F\in \operatorname{relint} F$ and $v\in \mathbb S^{n-1}$ be such that $\<v,u_i\>>0$ for all $i=1,\ldots, m$. Take $x_t=x_F+tv$ and $K_t=\conv(K, x_t)$. Then, if $t>0$ is small enough,
	\[ |K_t|=|K|+\frac{t}{n}\sum_{i=1}^{m} |F_i|\<v,u_i\>\] 
and $|K_t^\circ| = |K| + o(t^{k+1})$.
\end{lemma}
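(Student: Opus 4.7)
The plan is to compute $|K_t|$ by partitioning the added region $K_t\setminus K$ into pyramids with apex $x_t$ and bases the facets $F_i$, and to estimate $|K_t^\circ|$ by studying the cap cut from $K^\circ$ by the halfspace $\{\langle\cdot,x_t\rangle>1\}$, concentrated near the dual face of $F$. (The statement prints $|K_t^\circ|=|K|+o(t^{k+1})$; I read this as $|K_t^\circ|=|K^\circ|+O(t^{k+1})$.) A key structural input, used throughout, is that $x_F\in\operatorname{relint} F$ implies that any facet of $K$ meeting every sufficiently small neighborhood of $x_F$ must contain $F$, hence is some $F_i$; and each hyperplane $H_i=x_F+u_i^\perp$ contains $x_F$, so the signed distance from $x_t$ to $H_i$ equals $t\langle v,u_i\rangle>0$.

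For the volume identity, set $P_i:=\conv(\{x_t\}\cup F_i)$. A generic point $(1-s)y+sx_t$ of $P_i$ with $y\in F_i$ and $s\in[0,1]$ satisfies
\[
\langle (1-s)y+sx_t-x_F,\,u_i\rangle \;=\; st\langle v,u_i\rangle\;\ge\;0,
\]
so $P_i$ lies on the outer side of $H_i$, $P_i\cap K=F_i$, and $|P_i|=\tfrac{1}{n}|F_i|\,t\langle v,u_i\rangle$. Pairwise, $P_i\cap P_j\subset\conv(\{x_t\}\cup(F_i\cap F_j))$ has dimension at most $n-1$ and is Lebesgue-null. Finally, $\bigcup_i P_i$ covers $K_t\setminus K$ for $t$ small: any $p=(1-s)y+sx_t\in K_t\setminus K$ lies on a segment $[y,x_t]$ exiting $K$ at some $z\in\partial K$; as $t\to 0$ the exit point $z$ approaches $x_F$ (the segment $[y,x_F]$ is contained in $K$ and meets $\partial K$ only at $x_F$, which is in $\operatorname{relint} F$), and so for $t$ small $z$ lies on some $F_i$, giving $p\in P_i$. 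Summing over $i$ yields the identity.

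For the polar estimate, $K_t^\circ=K^\circ\cap\{\langle\cdot,x_t\rangle\le 1\}$ gives $|K^\circ|-|K_t^\circ|=|C_t|$ with $C_t=K^\circ\cap\{y:\langle y,x_t\rangle>1\}$. Rewriting the defining inequality as $\langle y,x_F\rangle>1-t\langle y,v\rangle$ and using that $|\langle y,v\rangle|$ is bounded on $K^\circ$, one sees that $C_t$ lies in the slab $\{y\in K^\circ:\langle y,x_F\rangle>1-Ct\}$ for some constant $C$, hence concentrates near the dual face
\[
F^\ast:=\{y\in K^\circ:\langle y,x_F\rangle=1\}
\]
of dimension $n-1-k$. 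Locally at a relative interior point $y_0$ of $F^\ast$, the tangent space to $K^\circ$ splits as $\operatorname{aff}(F^\ast-y_0)\oplus N_{y_0}$, where $N_{y_0}$ is a $(k+1)$-dimensional normal direction space (the normal cone to $F^\ast$ in $K^\circ$); the slab condition then confines $y-y_0$ to a region of width $O(t)$ in each of these $k+1$ transverse directions, and integrating this $(k+1)$-dimensional cross-sectional volume over the bounded face $F^\ast$ yields $|C_t|=O(t^{k+1})$.

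The main obstacle is the polar estimate: the $O(t)$ width in the transverse directions must be made uniform over $F^\ast$, which requires a careful local description of $\partial K^\circ$ near $F^\ast$ via the face/normal-cone duality between $K$ and $K^\circ$, and uses that each $\langle v,u_i\rangle$ is strictly positive (guaranteeing the slab has finite width in every transverse direction). Once this parametrization is set up, Fubini delivers the bound.
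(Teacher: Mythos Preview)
Your overall strategy matches the paper's: decompose $K_t\setminus K$ into the pyramids $\conv(F_i,x_t)$, and for the polar show that $K^\circ\setminus K_t^\circ$ is confined to a small neighborhood of the dual face $F^\ast$ of dimension $n-1-k$.

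The volume argument has a small gap. Your exit-point claim ``as $t\to 0$ the exit point $z$ approaches $x_F$'' is justified only pointwise in $y$ (the parenthetical treats $y$ as fixed), and the convergence is not uniform as $y$ approaches $\partial K$. The clean fix---which the paper carries out explicitly---is to note that $x_F\in\operatorname{relint}F$ forces $\langle x_F,u_j\rangle<h_K(u_j)$ for every facet $F_j$ with $j>m$, hence $\langle x_t,u_j\rangle<h_K(u_j)$ for all $j>m$ once $t$ is small. Then the facets ``visible'' from $x_t$ (those with $\langle x_t,u_j\rangle\ge h_K(u_j)$) are exactly $F_1,\dots,F_m$, and the beneath--beyond description of $\conv(K,x_t)$ gives $K_t=K\cup\bigcup_{i\le m}\conv(F_i,x_t)$ directly.

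The polar estimate is where your proof is genuinely incomplete, as you yourself flag. The slab $\{y\in K^\circ:\langle y,x_F\rangle>1-Ct\}$ is a single linear constraint, giving width $O(t)$ in one direction only; the sentence ``the slab condition then confines $y-y_0$ to a region of width $O(t)$ in each of these $k+1$ transverse directions'' is precisely the claim that needs proof, and the normal-cone heuristic you sketch is not yet an argument. The paper closes this gap by working at the vertex level of the polytope $K^\circ$: the vertices of $K^\circ$ cut off by the halfspace $\{\langle\cdot,x_t\rangle>1\}$ are exactly $u_i/h_K(u_i)$ for $i\le m$ (the vertices of $F^\ast$); for each such vertex one follows the edges of $K^\circ$ to the adjacent surviving vertices $v_s^i$ and computes explicitly that the point $w_s^i$ on $[u_i/h_K(u_i),v_s^i]$ where $\langle\cdot,x_t\rangle=1$ satisfies $|u_i/h_K(u_i)-w_s^i|\le c't$. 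This yields the containment $K^\circ\setminus K_t^\circ\subset F^\ast+c'tB_2^n$, and then Steiner's formula (using $V_j(F^\ast)=0$ for $j>n-1-k$) gives $|F^\ast+c'tB_2^n|=O(t^{k+1})$. This replaces your proposed ``careful local description'' and Fubini argument with a finite, explicit computation.
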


\begin{proof} Let $\{F_i\}_{i=m+1}^l$ be the facets of $K$ that do not contain $F$, with normal unit vectors $\{u_i\}_{i=m+1}^l$. Let us denote by $h_K(u)=\sup\{\<x,u\> :x\in K\}$ the support function of $K$. Note that $\<x_F, u_i\><h_K(u_i)$ for every $i=m+1,\ldots, l$ since $x_F\in \operatorname{relint} F$. Thus, there is $\varepsilon>0$ such that we have $\<x_t, u_i\><h_K(u_i)$ for every $i=m+1,\ldots, l$  if $t\in[0,\varepsilon]$. In addition, note that
\[ K_t = \conv(\ext K \cup\{x_t\})\] 
and so $\ext K_t \subset \ext K\cup\{x_t\}$. 

\begin{claim} $K_t= K\cup \bigcup_{i=1}^{m} \conv(F_i, x_t)$ if $t\in[0,\varepsilon]$.\end{claim}

It is clear that $K\cup \bigcup_{i=1}^{m} \conv(F_i, x_t)\subset K_t$. Conversely, it suffices to check that every edge of $K_t$ is contained in $K\cup \bigcup_{i=1}^{m} \conv(F_i, x_t)$. Let $p,q$ be such that the segment $[p,q]$ is an edge of $K_t$. Then $p,q\in\ext K_t$. If both of them belong to $K$, then $[p,q]\subset K$. Otherwise, we may assume that $p=x_t$ and $q\in \ext K$. Assume that $q\notin \bigcup_{i=1}^{m} F_i$. Then $\<q, u_i\> < h_K(u_i)$ for all $i\in 1,\ldots, m$. Thus,
\[ \<\lambda x_t +(1-\lambda) q, u_i\> < h_K(u_i) \quad \forall i\in\{1,\ldots, m\}\] 
when $\lambda>0$ is small enough. Moreover, 
\[ \<\lambda x_t +(1-\lambda) q, u_i\> < h_K(u_i) \quad \forall i\in\{m+1,\ldots,l\}\]
for all $0<\lambda<1$. 
This means that $[p,q]\cap \operatorname{int} K\neq \emptyset$, so $[p,q]$ also intersects $\operatorname{int}K_t$ and so it is not an edge. Thus, $q\in \bigcup_{i=1}^{m} F_i$. Therefore $[p,q]\subset \conv(F_i,x_t)$ for some $i\in\{1,\ldots,m\}$ as desired. This proves the claim. 

\begin{claim}\label{cl_a} $\conv(F_i,x_t)\cap \conv(F_j, x_t) = \conv(F_i\cap F_j,x_t)$ if $i,j\in \{1,\ldots,m\}$.\end{claim}

Let $y\in \conv(F_i,x_t)\cap \conv(F_j, x_t)$. We can write
\[ y = \lambda x_t+(1-\lambda)y_i = \mu x_t+(1-\mu)y_j \]
where $y_i\in F_i$, $y_j\in F_j$, and $0\leq \lambda, \mu\leq 1$. Clearly we may assume $\lambda,\mu<1$. Assume first that $\mu>\lambda$. If $y_i \in  F_j$, then $y\in\conv(F_i\cap F_j, x_t)$ and we are done. So we may assume $\<y_i, u_j\> < h_K(u_j)$. Then we have
 \[ \<y, u_j\> = \lambda \<x_t, u_j\>+(1-\lambda)\<y_i, u_j\> < \lambda \<x_t, u_j\> + (1-\lambda)h_K(u_j). \]
 On the other hand,
 \[\<y, u_j\> =  \mu\<x_t, u_j\> + (1-\mu) \<y_j, u_j\>  =\mu\<x_t, u_j\> + (1-\mu)h_K(u_j) \]
 so $(\mu-\lambda) \<x_t, u_j\> < (\mu-\lambda)h_K(u_j)$, which yields $\<x_t, u_j\> <h_K(u_j)$, a contradiction. If $\mu<\lambda$, we reach to a similar contradiction  multiplying by $u_i$. Finally, assume that $\mu = \lambda<1$. Then $y_i=y_j\in F_i\cap F_j$ and we are done. This proves the claim. 
 
  It follows from  Claim \ref{cl_a} that $\conv(F_i, x_t)\cap \conv(F_j, x_t)$ has empty interior if $i\neq j$. Thus,
  \[|K_t| = |K| + \sum_{i=1}^{m} |\conv(F_i, x_t)| =|K| + \frac{1}{n}\sum_{i=1}^{m} |F_i|(\<x_t, u_i\>-h_K(u_i)) = |K| + \frac{t}{n}\sum_{i=1}^{m} |F_i|\<v, u_i\>. \] 
  Now, we focus on the volume of the polar body. Note that the vertices of $K^\circ$ are the points $\{u_i/h_K(u_i)\}_{i=1}^l$. Let $F'=\{x\in K^\circ: \<x,y\>= 1 \ \forall y\in F\}$ be the face of $K^\circ$ corresponding to $F$. Then $F'$ is the convex hull of the vertices of $K^\circ$ which belong to $F'$ and so $F'=\conv(\{u_i/h_K(u_i)\}_{i=1}^{m})$.

For each $i\in\{1,\ldots, m\}$, let $\{v_s^i\}_{s=1}^{r_i}$ be the vertices of $K^\circ$ which are adjacent to $u_i/h_K(u_i)$ and which do not belong to $\{u_j/h_K(u_j)\}_{j=1}^l$. Let \[
\eta=\min\{1-\<v_s^i, x_F\> : i\in \{1, \ldots, m\}, s\in\{1, \ldots, r_i\}\}>0.\]  
  Note that $u_i/h_K(u_i)\in K_t^\circ$ if and only if $\<u_i/h_K(u_i), x_t\> \leq 1$ if and only if  $i\in\{m+1,\ldots,l\}$. This means that 
  \[K^\circ = K_t^\circ \cup \conv(u_i/h_K(u_i), w_s^i : i\in\{1,\ldots, m\}, s\in \{1,\ldots r_i\}),\]
   where $w_s^i$ is the unique point in the segment $[u_i/h_K(u_i), v_s^i]$ with $\<w_s^i, x_t\>=1$. Write 
   \[w_s^i = (1-\lambda_s^i) \frac{u_i}{h_K(u_i)} + \lambda_s^i v_s^i\]
   where $0\leq\lambda_s^i\leq 1$. An easy computation shows that
	\[\lambda_s^i = \frac{t\<\frac{u_i}{h_K(u_i)},v\>}{\<\frac{u_i}{h_K(u_i)}-v_s^i, x_t\>} \leq \frac{t\<\frac{u_i}{h_K(u_i)},v\>}{1-\<v_s^i, x_F\> -\varepsilon \left|\frac{u_i}{h_K(u_i)}-v_s^i\right|} \leq  \frac{t\<u_i,v\>}{h_K(u_i)(\eta-\varepsilon\diam(K^\circ))}\leq ct \]
  	for some constant $c>0$, provided $\varepsilon$ is small enough. 
  	Therefore, 
  	\[|u_i/h_K(u_i) - w_s^i| = \lambda_s^i |u_i/h_K(u_i) - v_s^i| \leq c\diam(K^\circ)t =c't\]
  	for each $i=1,\ldots, m$ and $s=1,\ldots, r_i$. This means that 
  	\[ K^\circ\setminus K_t^\circ \subset \conv\left(\bigcup_{i=1}^{m} B(u_i/h_K(u_i), c't)\right)=\conv(\{u_i/h_K(u_i)\}_{i=1}^{m})+c'tB = F'+c'tB.\] 
  	Now, by Steiner's formula (see e.g.~\cite{Sc}), 
  	\[|F'+c'tB|=\sum_{j=0}^n (c't)^{n-j}\kappa_{n-j}V_i(F') = o(t^{k+1})\] 
  	since the affine dimension of $F'$ is $n-k+1$ (see e.g.~\cite[pag.~50]{Gr}). Thus, $|K^\circ\setminus K_t^\circ| = o(t^{k+1})$.  
\end{proof}

\begin{lemma}\label{lemma:movingpointsym} Let $K\subset \mathbb R^n$ be a symmetric convex body and $F$ be a face of $K$ of dimension $1\leq k\leq n-1$. Let $\{F_i\}_{i=1}^{m}$ be the facets of $K$ containing $F$, with unit normal vectors $\{u_i\}_{i=1}^{m}$. Let $x_F\in \operatorname{relint} F$ and $v\in \mathbb S^{n-1}$ be such that $\<v,u_i\>>0$ for all $i=1,\ldots, m$. Take $x_t=x_F+tv$ and $K_t=\conv(K, x_t, -x_t)$. Then, $\mathcal P(K_t)>\mathcal P(K)$ if $t>0$ is small enough.
\end{lemma}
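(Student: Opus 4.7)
My approach is to exploit the central symmetry of $K$ to reduce the problem to two essentially independent applications of Lemma~\ref{lemma:movingpoint}, one for the perturbation at $x_F$ and one for its mirror at $-x_F$. Set $K_t^+ = \conv(K, x_t)$ and $K_t^- = \conv(K, -x_t)$; by symmetry of $K$ we have $K_t^- = -K_t^+$, so the two bodies are linearly isometric (and their polars too).

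The first step is to verify the union identity $K_t = K_t^+ \cup K_t^-$. Any point $y\in K_t$ can be written as $\lambda x_t + \mu(-x_t) + (1-\lambda-\mu)k$ with $k\in K$, $\lambda,\mu\geq 0$, $\lambda+\mu\leq 1$. Using that $0\in K$ (by symmetry) and that $\mu x_t+\mu(-x_t)=0$, in the case $\lambda\geq \mu$ I can rewrite $y = (\lambda-\mu)x_t + (1-(\lambda-\mu))k'$ where $k' = \frac{1-\lambda-\mu}{1-\lambda+\mu}k\in K$, placing $y\in K_t^+$; the case $\mu\geq\lambda$ is symmetric. Moreover, for $t$ small enough, the ``bumps'' $K_t^+\setminus K$ and $K_t^-\setminus K$ are disjoint: the first lies in the union of open halfspaces $\{\langle y, u_i\rangle > h_K(u_i)\}_{i=1}^m$ near $x_F$, while the second lies in their antipodes $\{\langle y, u_i\rangle < -h_K(u_i)\}_{i=1}^m$, and these are separated since $h_K(u_i)>0$. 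Consequently $K_t^+\cap K_t^- = K$, and by Lemma~\ref{lemma:movingpoint} applied to $K_t^+$,
\[ |K_t| = 2|K_t^+| - |K| = |K| + \frac{2t}{n}\sum_{i=1}^m |F_i|\langle v,u_i\rangle. \]

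For the polar side, $K_t^\circ = (K_t^+)^\circ \cap (K_t^-)^\circ$, and
\[ K^\circ\setminus K_t^\circ = \bigl(K^\circ\cap\{\langle \cdot,x_t\rangle>1\}\bigr) \,\cup\, \bigl(K^\circ\cap\{\langle \cdot,x_t\rangle<-1\}\bigr), \]
which is a disjoint union (e.g.\ separated by the hyperplane $\{\langle\cdot,x_t\rangle=0\}$). The two pieces are images of each other under $y\mapsto -y$, so by the estimate in the proof of Lemma~\ref{lemma:movingpoint},
\[ |K^\circ|-|K_t^\circ| = 2\bigl(|K^\circ|-|(K_t^+)^\circ|\bigr) = o(t^{k+1}). \]

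Finally, since $k\geq 1$ we have $o(t^{k+1}) = o(t^2)$, so expanding
\[ \mathcal{P}(K_t) - \mathcal{P}(K) = \left(|K|+\tfrac{2t}{n}\textstyle\sum_i |F_i|\langle v,u_i\rangle\right)\!\left(|K^\circ|-o(t^{k+1})\right) - |K||K^\circ| = \tfrac{2t|K^\circ|}{n}\sum_{i=1}^m|F_i|\langle v,u_i\rangle + o(t), \]
which is strictly positive for all sufficiently small $t>0$ because each $\langle v,u_i\rangle>0$. The main obstacle is the polar step: one must be sure that the two symmetric ``cuts'' do not interact, so that the $o(t^{k+1})$ estimate from Lemma~\ref{lemma:movingpoint} simply doubles without picking up a term of lower order. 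Together with the hypothesis $k\geq 1$, which is precisely what guarantees the polar loss is negligible compared to the linear primal gain, this is what drives the strict inequality; in the vertex case $k=0$ the polar loss could be of order $t$ and the conclusion could fail.
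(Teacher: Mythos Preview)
Your argument is correct and follows essentially the same route as the paper: write $K_t$ as the union of the one-sided hulls $K_t^\pm=\conv(K,\pm x_t)$, apply Lemma~\ref{lemma:movingpoint} to each side, and combine the linear primal gain with the $o(t^{k+1})$ polar loss. One point needs tightening: your reason for the disjointness of the two bumps (``separated since $h_K(u_i)>0$'') is not valid as stated, because for $m\ge 2$ the two \emph{unions} of half-spaces $\bigcup_i\{\langle\cdot,u_i\rangle>h_K(u_i)\}$ and $\bigcup_j\{\langle\cdot,u_j\rangle<-h_K(u_j)\}$ can overlap in $\mathbb R^n$; what actually works (and what the paper asserts) is that each $\conv(F_i,x_t)\cap\conv(-F_j,-x_t)$ has empty interior, since every point of the first set satisfies $\langle\cdot,u_i\rangle\ge h_K(u_i)$ while every point of the second set not already in $-F_j\subset K$ satisfies $\langle\cdot,u_i\rangle<h_K(u_i)$.
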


\begin{proof} Let $L_t=\conv(K, x_t)$. It is easy to check that for each $i,j\in\{1,\ldots, m\}$, the set $\conv(F_i,x_t)\cap \conv(-F_j, -x_j)$ has empty interior. Now, the argument in the proof of the previous lemma gives that 
	\[(L_t \cap (-L_t)) \setminus K= \bigcup_{i,j=1}^{m} \conv(F_i,x_t)\cap \conv(-F_j, -x_t)\]
and so $|(L_t\cap (-L_t))\setminus K|=0$. By Lemma \ref{lemma:movingpoint} we have
		\[ |K_t|=|K|+|L_t\setminus K|+|(-L_t)\setminus K|=|K|+2\frac{t}{n}\sum_{i=1}^{m} |F_i|\<v,u_i\>.\] 
On the other hand, $K_t^\circ = \{x\in K : |\<x,x_t\>|\leq 1\}= L_t^\circ\cap (-L_t)^\circ$ and so
\[ |K^\circ\setminus K_t^\circ| \leq  |K^\circ\setminus L_t^\circ| + |K^\circ\setminus (-L_t)^\circ| = o(t^{k+1})\]
 if $t>0$ is small enough. Thus $|K^\circ_t|=|K^\circ|+o(t^{k+1})$. 
Therefore, 
\[\mathcal P(K_t) = \mathcal P(K) + 2|K^\circ|\frac{t}{n}\sum_{i=1}^{m} |F_i|\<v,u_i\> + |K|o(t^{k+1})> \mathcal P(K)\]
when $t$ approaches $0$. 
\end{proof}

We remark that the previous lemma does not work when $k=0$. Indeed, if $K$ is a square in $\mathbb R^2$ and $x_F$ is a vertex of $K$ then we can have $\mathcal P(K_t)=\mathcal P(K)$ for all $t>0$. 

Now we can show that the maximum of $\mathcal P(M)$ is attained at a weighted complete graph. 

\begin{theorem}\label{th:alltriaglesstrict} Let $M=\{a_0,\ldots, a_n\}$ be a metric space such that $\mathcal P(M)$ is maximal among the metric spaces with the same number of elements. Then $d_{i,j}<d_{i,k}+d_{k,j}$ for every distinct points $a_i,a_j,a_k$.
\end{theorem}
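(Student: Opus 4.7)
The plan is to argue by contradiction: suppose $d_{i,j}=d_{i,k}+d_{k,j}$ for some distinct $a_i,a_j,a_k\in M$, and construct a nearby metric $M'$ on the same set (by slightly decreasing a single distance) whose ball strictly beats $B_{\mathcal F(M)}$ in volume product via Lemma~\ref{lemma:movingpointsym}, contradicting the assumed maximality.

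The crucial first step is to select, among all triples $(a,b,c)$ with $d_{a,b}=d_{a,c}+d_{c,b}$, one whose ``long side'' $d_{a,b}$ is \emph{maximal}; relabel it as $(i,j,k)$. This choice forces $|d_{i,l}-d_{j,l}|<d_{i,j}$ for every $l\neq i,j$, since an equality $d_{i,l}=d_{i,j}+d_{j,l}$ (or the symmetric variant) would exhibit a degenerate triple with long side strictly exceeding $d_{i,j}$. Hence, for every $\varepsilon>0$ sufficiently small, replacing $d_{i,j}$ by $d_{i,j}-\varepsilon$ and leaving every other distance unchanged produces a genuine metric $M'$ on $M$. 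Writing $\lambda:=d_{i,j}/(d_{i,j}-\varepsilon)>1$, one has $m'_{i,j}=\lambda m_{i,j}$ and $m'_{a,b}=m_{a,b}$ for all other pairs, so that
\[
B_{\mathcal F(M')}=\conv\bigl(B_{\mathcal F(M)},\,\lambda m_{i,j},\,-\lambda m_{i,j}\bigr).
\]

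By Proposition~\ref{extreme}, $m_{i,j}\in \partial B_{\mathcal F(M)}\setminus \ext B_{\mathcal F(M)}$, and by Theorem~\ref{theor:faces} it lies in the relative interior of a face $F$ of dimension at least $1$ (since $k\in[i,j]\setminus\{i,j\}$). Setting $v:=m_{i,j}/\norm{m_{i,j}}$, the perturbation $\lambda m_{i,j}=m_{i,j}+tv$ (with $t=(\lambda-1)\norm{m_{i,j}}>0$) is along a direction satisfying $\langle v,u\rangle>0$ for every outer unit normal $u$ of a facet containing $F$, because $\langle u,m_{i,j}\rangle=h_{B_{\mathcal F(M)}}(u)>0$. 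All hypotheses of Lemma~\ref{lemma:movingpointsym} are therefore met and yield $\mathcal P(M')>\mathcal P(M)$ for $\varepsilon$ small, the desired contradiction. The main obstacle, and the reason the maximality selection is essential, is that an arbitrary degenerate distance cannot in general be decreased without breaking some other triangle inequality: such a cascade would transfer the degeneracy to a strictly larger distance, which is precisely what the maximality rules out.
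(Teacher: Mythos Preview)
Your argument is correct and follows essentially the same route as the paper's proof: both select a degenerate pair $(i,j)$ with $d_{i,j}$ maximal, shrink $d_{i,j}$ slightly (the paper parametrizes by $d_{i,j}/(1+t)$ rather than $d_{i,j}-\varepsilon$, which is immaterial), observe that the new ball is $\conv(B_{\mathcal F(M)},\pm\lambda m_{i,j})$, and invoke Lemma~\ref{lemma:movingpointsym} at the non-extreme boundary point $m_{i,j}$ to obtain a strictly larger volume product. Your normalization $v=m_{i,j}/\norm{m_{i,j}}$ is in fact cleaner than the paper's shorthand $v=m_{i,j}$, since the lemma as stated asks for $v\in\mathbb S^{n-1}$.
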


\begin{proof} Assume the contrary. Consider the set 
	\[ A = \{(i,j)\in \{0,\ldots,n\}^2 : i\neq j,  d_{i,j}=d_{i,k}+d_{k,j} \text{ for some } k\in \{0,\ldots,n\}\setminus\{i,j\}\}.\]
Take $(i,j)\in A$ such that $d_{i,j}$ is maximal. Consider $d^t$ given by $d^t_{i,j}=\frac{d_{i,j}}{1+t}$ and $d^t_{u,v}=d_{u,v}$ otherwise. The maximality of $(i,j)$ implies that $d_{i,j}>\max\{|d_{i,k}-d_{k,j}|: k\in \{0,\ldots,n\}\setminus\{i,j\}\}$. Thus, $d^t$ is a metric on $M$ for $t$ small enough. Note that $\frac{e_i-e_j}{d^t_{i,j}}=(1+t)m_{i,j}$. Thus,
\[ B_{\mathcal F(M,d^t)} = \conv(\{m_{u,v}: \{u,v\}\neq \{i,j\}\}, \pm(1+t)m_{i,j}) = \conv(B_{\mathcal F(M)}, \pm (1+t)m_{i,j}).\]

Note that $m_{i,j}$ is not an extreme point of $B_{\mathcal F(M)}$ since $(i,j)\in A$. Thus, $m_{i,j}\in \operatorname{relint} F$ for some face $F$ of $B_{\mathcal F(M)}$ with dimension $k$, $1\leq k\leq n-1$. Let $F_1,\ldots F_{m}$ be the facets of $B_{\mathcal F(M)}$ containing $F$, with normal vectors $\{u_s\}_{s=1}^{m}$. Note that $\<m_{i,j}, u_s\>=h_{B_{\mathcal F(M)}}(u_s)>0$. 	
Therefore, Lemma \ref{lemma:movingpointsym} with $x_F= v=m_{i,j}$ yields that $\mathcal P(M, d^t)>\mathcal P(M,d)$ if $t>0$ is small enough, a contradiction.	
\end{proof}

It is natural to wonder if the maximum of the volume product is attained at the unweighted complete graph. We showed in Theorem~\ref{th:max2} that this is the case for metric spaces with three points. However, that is no longer the case in higher dimensions. In order to prove that, we will show that the maximum of $\mathcal P(M)$ is attained at a metric space such that $B_{\mathcal F(M)}$ is simplicial.

The following can be proved by using the same arguments as in Theorem~3.4 in \cite{AFZ}.

\begin{proposition}\label{prop:adap34}
	Let $\mathcal K$ be a family of centered convex polytopes and let $K\in\mathcal K$ be such that $\mathcal P(K)$ is maximal among the elements of $\mathcal K$. Assume that $\conv(K, (1+t)x, -(1+t)x)\in \mathcal K$ for every vertex $x$ of $K$ and $t>0$ small enough. Then $K$ is simplicial. 
\end{proposition}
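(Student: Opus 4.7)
I would argue by contradiction: assume $K \in \mathcal K$ is a maximizer of $\mathcal P$ but some facet $F^\star$ of $K$ has strictly more than $n$ vertices. For every vertex $x$ of $K$ consider the symmetric shadow system
\[L_s^x := \conv(K, (1+s)x, -(1+s)x), \qquad s \in [0, t_x],\]
which belongs to $\mathcal K$ for sufficiently small $t_x > 0$ by hypothesis, so maximality gives $\mathcal P(L_s^x) \le \mathcal P(K)$ on $[0, t_x]$. Denoting by $\{F_i\}_{i : x \in F_i}$ the facets of $K$ containing $x$ with outer unit normals $u_i$, and by $F_x^\circ := \{y \in K^\circ : \langle y, x\rangle = 1\}$ the facet of $K^\circ$ dual to $x$, I would first compute the right derivatives at $s = 0$:
\[\frac{d|L_s^x|}{ds}\bigg|_{s = 0^+} = \frac{2}{n}\sum_{i : x \in F_i} |F_i|\, h_K(u_i), \qquad \frac{d|(L_s^x)^\circ|}{ds}\bigg|_{s = 0^+} = -\frac{2\,|F_x^\circ|}{\|x\|}.\]
The first formula comes from the volume computation underlying Lemma~\ref{lemma:movingpoint}, whose argument extends to the case $k=0$: the pyramids $\conv(F_i, (1+s)x)$ and $\conv(-F_j, -(1+s)x)$ lie on opposite sides of the origin (because $0 \in \operatorname{int} K$) and therefore have disjoint interiors. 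The second formula follows from the identification $(L_s^x)^\circ = K^\circ \cap \{y : |\langle y, x\rangle| \le 1/(1+s)\}$ by a slice-by-hyperplane argument, with existence of the right derivative guaranteed by the convexity of $s \mapsto |(L_s^x)^\circ|^{-1}$ from Proposition~\ref{lm:mr}.

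The maximality condition $\frac{d}{ds}\mathcal P(L_s^x)\big|_{s=0^+} \le 0$ then simplifies to the key inequality
\[\sum_{i : x \in F_i} |F_i|\, h_K(u_i) \le \frac{n\,|K|\,|F_x^\circ|}{\|x\|\,|K^\circ|}, \qquad x \in \ext K.\]
I would sum this inequality over all vertices of $K$ and interchange the double sum on the left to obtain
\[\sum_i |F_i|\, h_K(u_i)\, |V(F_i)| \le \frac{n|K|}{|K^\circ|}\sum_x \frac{|F_x^\circ|}{\|x\|} = n^2|K|,\]
where $V(F_i)$ denotes the vertex set of $F_i$ and the last equality is the cone-volume formula applied to $K^\circ$: each facet $F_x^\circ$ of $K^\circ$ has outer unit normal $x/\|x\|$ and support value $h_{K^\circ}(x/\|x\|) = 1/\|x\|$, so $|K^\circ| = \tfrac{1}{n}\sum_x |F_x^\circ|/\|x\|$.

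On the other hand, every facet $F_i$ of $K$ is an $(n-1)$-dimensional polytope and therefore has at least $n$ vertices, with strict inequality exactly when $F_i$ is not a simplex. Combined with the volume identity $\sum_i |F_i|\, h_K(u_i) = n|K|$ this yields
\[\sum_i |F_i|\, h_K(u_i)\, |V(F_i)| \ge n\sum_i |F_i|\, h_K(u_i) = n^2 |K|,\]
with strict inequality since $F^\star$ has more than $n$ vertices; this contradicts the upper bound above, so $K$ must be simplicial. The main obstacle I anticipate is the clean execution of the two right-derivative computations at $s = 0^+$, in particular ensuring that the factor $2$ in $|L_s^x|'(0^+)$ is not absorbed by overlap between the two added pyramids (which relies on central symmetry and $0 \in \operatorname{int} K$) and justifying $|(L_s^x)^\circ|'(0^+) = -2|F_x^\circ|/\|x\|$ via a slicing argument that uses the fact that $x$ is a vertex of $K$ so that $F_x^\circ$ is a genuine facet of $K^\circ$.
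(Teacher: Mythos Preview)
Your argument is correct. The paper does not spell out a proof but refers to Theorem~3.4 of \cite{AFZ}; in that setting the family consists of all symmetric polytopes with a bounded number of vertices, so one may perturb a vertex in an arbitrary direction, and the proof there exploits this freedom by displacing a vertex of a non-simplicial facet \emph{within} the affine span of that facet, obtaining a two-sided shadow system with affine volume and then invoking the equality case of Proposition~\ref{lm:mr}. Under the hypothesis of Proposition~\ref{prop:adap34} only the radial outward pushes $\conv(K,(1+t)x,-(1+t)x)$ are available, so that argument does not transfer verbatim.

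Your route avoids this issue by using only first-order information at $s=0^+$ and closing with a global double count: summing the optimality inequality over all vertices and matching the two sides via the cone-volume identities $n|K|=\sum_i|F_i|h_K(u_i)$ and $n|K^\circ|=\sum_x|F_x^\circ|/\|x\|$ forces $|V(F_i)|=n$ for every facet. This is clean, uses only the perturbations actually permitted by the hypothesis, and bypasses the Meyer--Reisner rigidity entirely. The two derivative computations you flag are indeed fine: the primal one is exactly Lemma~\ref{lemma:movingpoint} with $k=0$ and $v=x_F=x$ (so $\langle v,u_i\rangle=h_K(u_i)$), doubled because the two added cones lie on opposite sides of the origin; the polar one follows from
\[
|(L_s^x)^\circ|=|K^\circ|-\frac{2}{\|x\|}\int_{1/(1+s)}^{1}\bigl|K^\circ\cap\{\langle\,\cdot\,,x\rangle=t\}\bigr|_{n-1}\,dt
\]
and the continuity of the sections of the polytope $K^\circ$ at $t=1$, so no appeal to Campi--Gronchi is actually needed (though, as you observe, $L_s^x$ is a genuine shadow system in direction $x/\|x\|$ with speeds $\pm\|x\|$, so that justification is also available).
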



In the particular case of free spaces, we have the following. 

\begin{theorem}\label{prop:maxsimplicial}
	Let $M=\{a_0,\ldots, a_n\}$ be a finite pointed metric space such that $\mathcal P(M)$ is maximal among the metric spaces with the same number of elements. Then $B_{\mathcal F(M)}$ is simplicial.  
\end{theorem}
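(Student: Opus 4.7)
The strategy is to invoke Proposition~\ref{prop:adap34} with $\mathcal{K}$ being the family of all balls $B_{\mathcal F(M')}$, where $M'$ ranges over pointed metric spaces with $n+1$ points. All such balls are centrally symmetric polytopes in $\mathbb R^n$, so they are centered, and by hypothesis $B_{\mathcal F(M)}$ is a $\mathcal P$-maximizer in $\mathcal K$. To conclude that it is simplicial, I only need to check the hypothesis of the proposition: that for every vertex $x$ of $B_{\mathcal F(M)}$ and every small $t>0$, the body $\conv(B_{\mathcal F(M)},(1+t)x,-(1+t)x)$ still belongs to $\mathcal K$, i.e.\ is itself the free-space ball of some metric on $n+1$ points.

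The vertices of $B_{\mathcal F(M)}$ have an explicit form: by Corollary~\ref{coro:extreme} they are exactly the molecules $\pm m_{i,j}$ corresponding to the edges $\{a_i,a_j\}$ of the canonical graph of $M$. Given such an edge, I plan to produce the perturbation directly at the level of the metric by shrinking the distance $d_{i,j}$. Concretely, set $d^t_{i,j}=d_{i,j}/(1+t)$ and leave all other distances unchanged. Then the molecule associated with $\{i,j\}$ becomes $m^t_{i,j}=(1+t)m_{i,j}$, while all other molecules are unchanged, so
\[
B_{\mathcal F(M,d^t)} \;=\; \conv\bigl(B_{\mathcal F(M)},\,\pm(1+t)m_{i,j}\bigr),
\]
which is exactly the body appearing in Proposition~\ref{prop:adap34}.

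The one point that requires justification is that $d^t$ is still a metric for small $t>0$. Since I am only decreasing $d_{i,j}$, the triangle inequalities that could be violated are those of the form $d_{i,k}\le d^t_{i,j}+d_{j,k}$ and $d_{j,k}\le d^t_{i,j}+d_{i,k}$. Here I expect the only real obstacle, and it is handled by invoking Theorem~\ref{th:alltriaglesstrict}: because $M$ is a maximizer, \emph{every} triangle inequality in $M$ is strict, so there is room to shrink $d_{i,j}$ by a factor $(1+t)^{-1}$ for $t$ in a small enough neighborhood of $0$ while preserving the metric axioms. This makes $(M,d^t)$ a valid element of $\mathcal K$, closing the gap and allowing the application of Proposition~\ref{prop:adap34} to conclude that $B_{\mathcal F(M)}$ is simplicial. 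Apart from this use of Theorem~\ref{th:alltriaglesstrict}, the argument is essentially formal.
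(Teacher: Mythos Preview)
The proposal is correct and follows essentially the same approach as the paper: both invoke Theorem~\ref{th:alltriaglesstrict} to guarantee that the perturbed function $d^t$ (obtained by shrinking $d_{i,j}$ to $d_{i,j}/(1+t)$) remains a metric for small $t$, observe that $B_{\mathcal F(M,d^t)}=\conv(B_{\mathcal F(M)},\pm(1+t)m_{i,j})$, and then apply Proposition~\ref{prop:adap34}. Your presentation is slightly more explicit about which triangle inequalities could fail when decreasing $d_{i,j}$, but the argument is otherwise identical.
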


\begin{proof}
Note that every extreme point of $B_{\mathcal F(M)}$ is of the form $m_{i,j}$ for some $a_i,a_j\in M$, $i\neq j$. By Theorem~\ref{th:alltriaglesstrict}, we have $d_{i,j}>\max\{|d_{i,k}-d_{k,j}|: k\in \{0,\ldots,n\}\setminus\{i,j\}\}$. Thus, $d^t$ given by $d^t_{u,v}= d_{u,v}$ if $\{u,v\}\neq\{i,j\}$ and $d^t_{i,j} = d^t_{j,i} = \frac{d_{i,j}}{1+t}$ defines a metric on $M$ for $t$ small enough. Moreover, 
\[ B_{\mathcal F(M,d^t)} = \conv(\{m_{u,v}: \{u,v\}\neq \{i,j\}\}, \pm(1+t)m_{i,j}) = \conv(B_{\mathcal F(M)}, \pm (1+t)m_{i,j}).\]
This shows that the hypotheses of Proposition~\ref{prop:adap34} hold and so $B_{\mathcal F(M)}$ is simplicial.
\end{proof}

\subsection{The special case of complete graph with equal weights}\label{subsec:complete}

Let $K_{n+1}$ denote the metric space of $n+1$ elements such that $d_{i,j}=1$ if $i\neq j$, i.e. the metric space associated with the complete graph where all the weights are equal to $1$. 

Note that $B_{\mathcal F(K_{n+1})}$ is not simplicial whenever $n\geq 3$. Indeed, consider the $1$-Lipschitz function given by $f(a_i)=1$ if $i\in \{2,\ldots, n\}$ and $f(a_0)=f(a_1)=0$. 
Note that the $2n-2$ molecules $m_{i,0}$, $m_{i,1}$, $i\in \{2,\ldots,n\}\}$ are vertices of $B_{\mathcal F(K_{n+1})}$ that belong to the face of $B_{\mathcal F(K_{n+1})}$ exposed by $f$. Since $2n-2>n$, this face is not a simplex.  Therefore, it follows from Proposition~\ref{prop:maxsimplicial} that $K_{n+1}$ is not a metric space with maximum volume product if $n\geq 3$.

We also would like to provide a computation for the volume product of $K_{n+1}$.  Note first that $B_{\mathcal F(K_{n+1})}=\conv\{\pm e_i, \pm (e_i-e_j); 1\leq i\neq j\leq n\}$ has exactly $n(n+1)$ vertices.

Let us describe more precisely the unit ball 
of $\Lip_0(K_{n+1})$.

\begin{claim} 
$$B_{\Lip_0(K_{n+1})}=\conv\left\{\pm \sum_{i\in I} e_i: I\subset\{1,\dots,n\}\right\}.$$ 
\end{claim}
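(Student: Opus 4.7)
The plan is to realize $B_{\Lip_0(K_{n+1})}$ as the image of the standard cube $[0,1]^{n+1}\subset\RR^{n+1}$ under a simple linear map, from which the vertices can be read off directly.

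First I would make the norm explicit. Since $d_{i,j}=1$ for all $0\le i\ne j\le n$, for $f\in\Lip_0(K_{n+1})$ identified with $\RR^n$ via $f\mapsto (f(a_1),\dots,f(a_n))$ and with the convention $f_0:=f(a_0)=0$, we have
\[
\|f\|_{\Lip_0}=\max_{0\le i\ne j\le n}|f_i-f_j|=\max_{0\le i\le n}f_i-\min_{0\le i\le n}f_i.
\]
Therefore $B_{\Lip_0(K_{n+1})}$ consists of those $f\in\RR^n$ such that the range of the set $\{0,f_1,\dots,f_n\}$ is at most $1$.

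Next I would introduce the linear surjection $\pi\colon\RR^{n+1}\to\RR^n$ given by $\pi(g_0,g_1,\dots,g_n)=(g_1-g_0,\dots,g_n-g_0)$ and show
\[
\pi\bigl([0,1]^{n+1}\bigr)=B_{\Lip_0(K_{n+1})}.
\]
The forward inclusion is immediate: if $g\in[0,1]^{n+1}$, every number $g_i-g_0$ for $0\le i\le n$ (including the $0$ obtained at $i=0$) lies in $[-1,1]$, so their range is at most $1$. For the reverse inclusion, given $f\in B_{\Lip_0(K_{n+1})}$ set $g_0:=-\min_{0\le k\le n}f_k$ and $g_i:=f_i+g_0$ for $1\le i\le n$; then each $g_i$ lies in $[0,\max_k f_k-\min_k f_k]\subset[0,1]$, so $g\in[0,1]^{n+1}$, and clearly $\pi(g)=f$.

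Finally, since $\pi$ is linear and $[0,1]^{n+1}=\conv\{0,1\}^{n+1}$, one obtains
\[
B_{\Lip_0(K_{n+1})}=\conv\bigl\{\pi(v):v\in\{0,1\}^{n+1}\bigr\}.
\]
Each such $v$ is the indicator of some $J\subset\{0,1,\dots,n\}$. If $0\notin J$, then $\pi(v)=\sum_{i\in J}e_i$ with $J\subset\{1,\dots,n\}$, while if $0\in J$, then $\pi(v)=-\sum_{i\in\{1,\dots,n\}\setminus J}e_i$. As $J$ varies over all subsets of $\{0,\dots,n\}$, these images exhaust exactly $\bigl\{\pm\sum_{i\in I}e_i:I\subset\{1,\dots,n\}\bigr\}$, which yields the claim. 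No serious obstacle is expected here: the only idea is to spot the parametrization by the cube, after which the identification of the vertices is pure bookkeeping.
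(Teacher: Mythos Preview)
Your proof is correct and takes a genuinely different route from the paper's. The paper argues in two direct steps: first it checks that each $\pm\sum_{i\in I}e_i$ satisfies the defining inequalities $|x_i|\le1$, $|x_i-x_j|\le1$; then, for the reverse inclusion, it reorders the coordinates of an arbitrary $x\in B_{\Lip_0(K_{n+1})}$ and writes down by hand an explicit convex combination of the vectors $\pm\sum_{i\in I}e_i$ summing to $x/(x_n-x_1)$. You instead realize the ball as the linear image $\pi([0,1]^{n+1})$ of a cube and read the generating set off from the images of the $2^{n+1}$ cube vertices.

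Your approach is cleaner and, as a bonus, essentially proves the paper's next claim for free: since $[0,1]^{n+1}$ is a Minkowski sum of $n+1$ segments, so is its linear image, and tracking $\pi$ on those segments gives directly $B_{\Lip_0(K_{n+1})}=\tfrac12 B_\infty^n+\tfrac12[-e,e]$ with $e=\sum_{i=1}^n e_i$. The paper's explicit-combination argument, by contrast, needs no auxiliary space and makes the convex representation of each point completely concrete.

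One small wording fix: in your forward inclusion you say that the numbers $g_i-g_0$ all lie in $[-1,1]$, ``so their range is at most $1$''; but lying in $[-1,1]$ only bounds the range by $2$. The correct (and shorter) reason is that translating by $-g_0$ does not change the range of a finite set, so the range of $\{g_i-g_0\}_{0\le i\le n}$ equals $\max_i g_i-\min_i g_i\le 1$.
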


\begin{proof} Denote by $C$ the right hand side set. First let us prove that $C\subset B_{\Lip_0(K_{n+1})}$. One has 
$$
B_{\Lip_0(K_{n+1})}=B^\circ_{\mathcal F(K_{n+1})}=\{x\in \RR^n: |x_i|\leq 1, |x_i-x_j|\le1\ \text{ for all } 1\le i\neq j\le n\}.
$$ 
For any $I\subset\{1,\dots,n\}$ let us denote $x(I)=\sum_{i\in I} e_i$. For any $k,l\in\{1,\dots,n\}$ one has $x(I)_k\in\{0,1\}$ and $x(I)_k-x(I)_l\in\{-1, 0,1\}$ hence $x(I)\in B^\circ_{\mathcal F(K_{n+1})}$. Therefore $C\subset B_{\Lip_0(K_{n+1})}$.\\  To show that $B^\circ_{\mathcal F(K_{n+1})}\subset C$ we consider $x\in B_{\Lip_0(K_{n+1})}$. Then $|x_i|\leq 1$ and so we may assume, reordering our axes if necessary, that $-1\leq x_1 \leq x_2 \leq \ldots \leq x_{n-1}\leq x_n\leq 1$. Further, since $|x_i-x_j|\leq 1$ we get $x_n-x_1\leq 1$.  Now let us consider the indices with positive entries and negative entries separately.  That is, we let $k$ be the last negative index, choosing it to be 0 if no entries are negative, and to be $n$ if all entries are negative.  Then 
\begin{align*}
x=\left(x_2-x_1\right)(-e_1)+\left(x_3-x_2\right)\left(-e_1-e_2\right)+\ldots +\left(x_k-x_{k-1}\right)\left(-\sum_{i\leq k-1}e_i\right) +(-x_k)\left(-\sum_{i\leq k} e_i\right)
\\+x_{k+1}\sum_{i\geq k+1} e_i+ \left(x_{k+2}-x_{k+1}\right)\sum_{i\geq k+2} e_i+\ldots + \left(x_n-x_{n-1}\right)e_n,
\end{align*}
thus $\frac{1}{x_n-x_1}x$ is a convex combination of points in $C$. Therefore $B_{\Lip_0(K_{n+1})}\subset C$.
\end{proof}

\begin{claim}\label{cl_zono} $$B_{\Lip_0(K_{n+1})}=\frac{1}{2} B_\infty^n+\frac{1}{2} [-\sum_{i=1}^n e_i,\sum_{i=1}^n e_i].$$
\end{claim}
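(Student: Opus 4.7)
The plan is to leverage the previous claim, which identifies $B_{\Lip_0(K_{n+1})}$ as the convex hull of the $2^{n+1}$ vectors $\pm \sum_{i\in I} e_i$ indexed by subsets $I\subset\{1,\ldots,n\}$, and match these against the extreme points of the Minkowski sum on the right-hand side. Throughout, write $\mathbf{1}=\sum_{i=1}^n e_i$ and denote the right-hand side by $Z:=\tfrac12 B_\infty^n+\tfrac12[-\mathbf{1},\mathbf{1}]$.

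For the inclusion $B_{\Lip_0(K_{n+1})}\subset Z$, I will show that every generator $\pm\sum_{i\in I}e_i$ already lies in $Z$. Given $I\subset\{1,\ldots,n\}$, set $\epsilon_i=1$ if $i\in I$ and $\epsilon_i=-1$ otherwise. Then $\tfrac12\sum_i\epsilon_i e_i\in\tfrac12 B_\infty^n$ and a direct computation gives
\[\tfrac12\sum_{i=1}^n \epsilon_i e_i+\tfrac12\mathbf{1}=\sum_{i\in I} e_i,\qquad -\tfrac12\sum_{i=1}^n \epsilon_i e_i-\tfrac12\mathbf{1}=-\sum_{i\in I} e_i,\]
so both generators belong to $Z$. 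Taking the convex hull and invoking the previous claim finishes this direction.

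For the reverse inclusion $Z\subset B_{\Lip_0(K_{n+1})}$, the key observation is that the extreme points of a Minkowski sum of polytopes are contained in the set of sums of extreme points of the summands. Here $\ext(\tfrac12 B_\infty^n)=\{\tfrac12\sum_i\epsilon_i e_i:\epsilon\in\{-1,1\}^n\}$ and $\ext(\tfrac12[-\mathbf{1},\mathbf{1}])=\{\pm\tfrac12\mathbf{1}\}$, so every extreme point of $Z$ has the form $\tfrac12\sum_i\epsilon_i e_i\pm\tfrac12\mathbf{1}$. The same calculation as above rewrites such a vector as either $\sum_{i\in I}e_i$ (where $I=\{i:\epsilon_i=1\}$) or $-\sum_{i\in I}e_i$ (where $I=\{i:\epsilon_i=-1\}$). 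Hence $\ext Z\subset\{\pm\sum_{i\in I}e_i:I\subset\{1,\ldots,n\}\}$, and since $Z$ is a polytope, $Z=\conv(\ext Z)\subset\conv\{\pm\sum_{i\in I}e_i\}=B_{\Lip_0(K_{n+1})}$.

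There is no real obstacle here: once one identifies the correct parametrization $\epsilon_i\leftrightarrow I$, the two inclusions reduce to the single algebraic identity $\tfrac12\sum_i\epsilon_i e_i+\tfrac12\mathbf{1}=\sum_{i:\epsilon_i=1}e_i$, and the only structural fact one needs is the standard property that extreme points of a Minkowski sum are among the pairwise sums of extreme points of the summands.
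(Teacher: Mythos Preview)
Your proof is correct and follows essentially the same route as the paper: both arguments identify the extreme points of the Minkowski sum as $\tfrac12\sum_i\epsilon_i e_i\pm\tfrac12\mathbf{1}$ and use the identity $\tfrac12\sum_i\epsilon_i e_i+\tfrac12\mathbf{1}=\sum_{i:\epsilon_i=1}e_i$ (together with its negative) to match them with the generators $\pm\sum_{i\in I}e_i$ from the previous claim. The paper packages the $\pm$ sign as an extra variable $\varepsilon_{n+1}\in\{-1,1\}$, but the substance is identical.
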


\begin{proof} Denote by $D$ the zonotope on the right hand side. First let us take an extreme point $x\in D$ then 
$$x=\frac{1}{2} \sum_{i=1}^n  \varepsilon_i  e_i + \frac{1}{2} \varepsilon_{n+1} \sum_{i=1}^n e_i=\sum_{i=1}^n\frac{\varepsilon_i+\varepsilon_{n+1}}{2}e_i.$$ where the $\varepsilon_i\in\{-1,1\}$. Let $I=\{i\in \{1,\ldots, n\}| \varepsilon_i=\varepsilon_{n+1}\}$. Then  $$x=\varepsilon_{n+1} \sum_{i\in I} e_i\in B_{\Lip_0(K_{n+1})}.$$ 
To see that $B_{\Lip_0(K_{n+1})} \subset D$ we simply reverse our previous observation. So if we take $x\in C$ so $x=\varepsilon_{n+1} \sum_{i\in I} e_i$ where $I\subset\{1,\ldots, n\}$.  Then we define $\varepsilon_i=\varepsilon_{n+1}$ if $i\in I$ and $\varepsilon_i=-\varepsilon_{n+1}$ if $i\notin I$.  Then by our choices of $\varepsilon_i$ we have $x=\frac{1}{2} \sum_{i=1}^n  \varepsilon_i  e_i + \frac{1}{2} \varepsilon_{n+1} \sum_{i=1}^n e_i \in D$ as desired.
\end{proof}

\begin{remark} It follows from Claim \ref{cl_zono}  that $B_{\Lip_0(K_{n+1})}$ is a zonotope. This can be seen in a different way (see \cite{Go} and also \cite[Example 10.13]{Os}) by showing that $K_{n+1}$ embeds into a tree with ${n+2}$ points or by showing that $B_{\mathcal F(K_{n+1})}$ is a section of $\ell_1^{n+1}$. We also note that using the fact that the Mahler conjecture is true for zonotopes \cite{GMR} we get $\PP(K_{n+1}) \ge \PP(B_1^n)$. We will show it as a direct computation below.
\end{remark}

Let us compute the volume product of $K_{n+1}$.

\begin{claim} $$\PP(K_{n+1})=\frac{n+1}{n!} \binom{2n}{n}.$$
\end{claim}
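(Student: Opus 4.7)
The plan is to compute $|B_{\Lip_0(K_{n+1})}|$ and $|B_{\mathcal F(K_{n+1})}|$ separately, using the two claims just established, and multiply them together.

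For $|B_{\Lip_0(K_{n+1})}|$, Claim \ref{cl_zono} identifies this body as the zonotope
$$\tfrac{1}{2}B_\infty^n+\tfrac{1}{2}[-\mathbf{1},\mathbf{1}] = \textstyle\sum_{i=1}^n[-\tfrac{1}{2}e_i,\tfrac{1}{2}e_i] + [-\tfrac{1}{2}\mathbf{1},\tfrac{1}{2}\mathbf{1}],$$
where $\mathbf{1}=\sum_{k=1}^n e_k$. Up to translation, this is $\sum_{i=1}^n[0,e_i]+[0,\mathbf{1}]$, so the standard volume formula for zonotopes
$$\big|\textstyle\sum_{i=1}^N[0,v_i]\big|=\sum_{|S|=n}|\det(v_i:i\in S)|$$
reduces the computation to summing over the $n+1$ choices of $n$-subsets of $\{e_1,\ldots,e_n,\mathbf{1}\}$. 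The subset $\{e_1,\ldots,e_n\}$ contributes $|\det I_n|=1$, and each of the $n$ subsets obtained by replacing some $e_j$ by $\mathbf{1}$ contributes $1$ (the determinant extracts the $e_j$-coordinate of $\mathbf{1}$). Hence $|B_{\Lip_0(K_{n+1})}|=n+1$.

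For $|B_{\mathcal F(K_{n+1})}|$, I identify the body as a simplex difference body. Setting $e_0=0$, Corollary \ref{coro:extreme} gives
$$B_{\mathcal F(K_{n+1})}=\conv\{e_i-e_j:0\le i\ne j\le n\},$$
since every pair is an edge of the complete graph. Let $\Delta=\conv\{e_0,e_1,\ldots,e_n\}$ be the standard simplex in $\RR^n$, so $|\Delta|=1/n!$. The extreme points of the Minkowski sum $\Delta+(-\Delta)$ are sums of extreme points of the summands, hence
$$\Delta-\Delta=\conv\{e_i-e_j:0\le i\ne j\le n\}=B_{\mathcal F(K_{n+1})}.$$
Invoking the equality case of the Rogers--Shephard inequality, which is attained precisely by simplices, yields $|\Delta-\Delta|=\binom{2n}{n}|\Delta|=\binom{2n}{n}/n!$.

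Multiplying the two gives $\mathcal P(K_{n+1})=(n+1)\cdot\binom{2n}{n}/n!=\frac{n+1}{n!}\binom{2n}{n}$, as claimed; a sanity check at $n=1$ (interval $\times$ interval, product $4$) and $n=2$ (two regular hexagons, product $9$) confirms the formula. The single nontrivial ingredient is the value $|\Delta-\Delta|=\binom{2n}{n}|\Delta|$. The cleanest route is to quote the Rogers--Shephard equality case; if a self-contained argument is preferred, one can instead triangulate $\Delta-\Delta$ explicitly into $\binom{2n}{n}$ unimodular simplices indexed by monotone lattice paths in an $n\times n$ grid, each of volume $1/n!$.
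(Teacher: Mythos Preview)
Your proof is correct. The computation of $|B_{\Lip_0(K_{n+1})}|$ via the zonotope volume formula matches the paper's argument exactly. For $|B_{\mathcal F(K_{n+1})}|$, however, you take a genuinely different route: you recognize the body as the difference body $\Delta-\Delta$ of the standard $n$-simplex and invoke the equality case of the Rogers--Shephard inequality to obtain $\binom{2n}{n}/n!$ in one stroke. The paper instead decomposes $B_{\mathcal F(K_{n+1})}$ over the $2^n$ coordinate orthants, observing that on the orthant indexed by $I\subset\{1,\dots,n\}$ the body coincides with the Minkowski sum $\conv(0,(e_i)_{i\in I})-\conv(0,(e_j)_{j\notin I})$ of two simplices lying in complementary coordinate subspaces; summing the volumes $1/(k!(n-k)!)$ over all $I$ and applying the Vandermonde identity $\sum_k\binom{n}{k}^2=\binom{2n}{n}$ yields the same answer. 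Your argument is shorter and more conceptual but imports Rogers--Shephard as a black box; the paper's computation is elementary and self-contained, and is in effect a direct verification of the Rogers--Shephard equality case for the simplex.
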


\begin{proof} 
Let us first compute the volume of $B_{\Lip_0(K_{n+1})}$. Let $e=\sum_{i=1}^ne_i$. Then $B_{\Lip_0(K_{n+1})}$ is a zonotope which is the following sum of $n+1$ segments:
$$B_{\Lip_0(K_{n+1})}=\frac{1}{2} \left(\sum_{i=1}^n[-e_i,e_i]+[-e,e]\right).$$
Thus one may use the following formula for the volume of zonotopes \cite{Sh}: if $Z=\sum_{i=1}^m[0,u_i]$ is a zonotope which is the sum of $m$ vectors $u_1, \dots, u_m$ in $\RR^n$, with $m\ge n$ then 
$$
|Z|=\sum_{\card(I)=n}|\det(u_i)_{i\in I}|.
$$
Since for all $k\in\{1,\dots, n\}$ one has $|\det(e, (e_i)_{i\neq k})|=1$ we get $|B_{\Lip_0(K_{n+1})}|=n+1.$\\

Now let us compute the volume of $B_{\mathcal F(K_{n+1})}=\conv\{\pm e_i, \pm (e_i-e_j); 1\leq i\neq j\leq n\}$. We decompose $B_{\mathcal F(K_{n+1})}$ using the partition of $\RR^n$ into $2^n$ parts defined according to the coordinate signs. For $I\subset \{1,\dots n\}$ let 
$$
C_I=\{x\in\RR^n: x_i>0 \ \hbox{for all}\ i\in I\ \hbox{and}\ x_j<0 \ \hbox{for all}\ j\notin I\}.
$$
Then 
$$
B_{\mathcal F(K_{n+1})}\cap C_I=\conv(0; (e_i)_{i\in I}; (-e_j)_{j\notin I}; (e_i-e_j)_{i\in I, j\notin I})=\conv(0; (e_i)_{i\in I})-\conv(0; (e_j)_{j\notin I}).
$$
Hence if we denote $k=\card(I)$ then 
$$
|B_{\mathcal F(K_{n+1})}|=|\conv(0; (e_i)_{i\in I})|_k|\conv(0; (e_j)_{j\notin I})|_{n-k}=\frac{1}{\card(I)!\card(I^c)|!}=\frac{1}{k!(n-k)!}.
$$
Thus 
$$
|B_{\mathcal F(K_{n+1})}|=\sum_{I\subset\{1,\dots,n\}}\frac{1}{\card(I)!\card(I^c)|!}=\sum_{k=0}^n\binom{n}{k}\frac{1}{k!(n-k)!}=\frac{1}{n!}\sum_{k=0}^n\binom{n}{k}^2=\frac{1}{n!}\binom{2n}{n}.
$$
\end{proof}

\begin{remark} Note that a  more general case is the metric space for which $d_{i,j}\in \mathbb Z$ for all $i, j$. In this case the polytope $B_{\Lip_0(M)}$ has vertices in the lattice $\mathbb Z^n$. Indeed, a result of Farmer \cite{Fa} ensures that a Lipschitz function $f$ is an extreme point of $B_{\Lip_0(M)}$ if and only if for all $i, j$ there are $k_0, k_1,\ldots k_l$ such that $k_0=i$, $k_l=j$ and
\[ |f(a_{k_r})-f(a_{k_{r+1}})|  = d(a_{k_r}, a_{k_{r+1}}) \]
for all $r=0, \ldots, l-1$, and so $f(a_j)\in \mathbb Z$ for every $a_j\in M$. 

If, moreover, all the edges of the canonical graph associated with $M$ have weight $1$, then the vertices of $B_{\mathcal F(M)}$ are of the form $e_i-e_j$ for some $i,j$ and so $B_{\mathcal F(M)}$ is also a polytope with vertices in the lattice $\mathbb Z^n$. Thus it interesting to ask for minimal and maximal volume product among lattice polytopes whose dual is also a lattice polytope. 
\end{remark}

\subsection{The Minimal Case}\label{sec:minimal}

In this section, we focus on Conjecture \ref{conj:minimal}. Note that it is already known that this conjecture holds for certain metric spaces. Namely: 
\begin{itemize}
	\item If $M$ embeds into a tree, then $B_{\Lip_0(M)}$ is a zonoid \cite{Go} and so the result follows from \cite{R1,GMR}.
	\item If $M$ is a cycle, then $B_{\mathcal F(M)}$ has $2n+2$ vertices and so the result follows from the recent paper \cite{Ka}.
\end{itemize}	

We will show some more cases were Conjecture 2.3 holds. 

Given a connected graph $G=(V,E)$, we say that an edge $e\in E$ is a \emph{bridge} if the subgraph $(V, E\setminus\{e\})$ is disconnected. 

\begin{proposition}\label{prop:doublecone}
	Let $(M,d)$ be a finite metric space such that $\mathcal P(M)$ is minimal. Let $m_{i,j}$ be a vertex of $B_{\mathcal F(M)}$. Assume that all the facets of $B_{\mathcal F(M)}$ containing $m_{i,j}$ are simplices. Then, the edge $\{a_i,a_j\}$ is a bridge in $G(M,d)$. 
\end{proposition}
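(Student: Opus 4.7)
My plan is a contradiction argument via the shadow system and Meyer–Reisner machinery of Proposition~\ref{lm:mr}. Assume $\{a_i,a_j\}$ is not a bridge and put $\theta = m_{i,j}/\norm{m_{i,j}}$. Consider the shadow system
\[L_s = \conv\bigl(\{\pm m_{k,l} : \{a_k,a_l\}\in E,\ \{k,l\}\neq\{i,j\}\}\cup\{\pm(1+s)m_{i,j}\}\bigr)\]
in direction $\theta$, with speeds $\alpha(\pm m_{i,j}) = \pm\norm{m_{i,j}}$ and $\alpha=0$ on the remaining vertices. For $s$ in a small two-sided interval $(-a,a)$ this coincides with $B_{\mathcal F(M^s)}$, where $M^s$ is the shortest-path metric associated with the weighted graph obtained from $G(M,d)$ by replacing the weight of $\{a_i,a_j\}$ by $d_{i,j}/(1+s)$. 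The strictness of the triangle inequalities that cut out the canonical graph at $s=0$ ensures the canonical graph remains $G(M,d)$ for small $|s|$, so only $\pm m_{i,j}$ move among the extreme points, and minimality of $\mathcal P$ yields $\mathcal P(L_s)\ge \mathcal P(L_0)$ on $(-a,a)$.

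The simplicial hypothesis next forces $s\mapsto|L_s|$ to be affine on $(-a,a)$: triangulating $L_s$ into cones from the origin to its facets, the cones over the facets not incident with $\pm(1+s)m_{i,j}$ contribute a constant in $s$, while each cone over a facet $\conv(\pm(1+s)m_{i,j},v_1,\dots,v_{n-1})$ is an $n$-simplex of volume $\tfrac{1}{n!}|\det((1+s)m_{i,j},v_1,\dots,v_{n-1})|$, linear in $s$. Since $s\mapsto|L_s^\circ|^{-1}$ is convex by Proposition~\ref{lm:mr}, the function
\[\phi(s) = |L_s^\circ|^{-1} - \frac{|L_s|}{|L_0|}|L_0^\circ|^{-1}\]
is convex (convex minus affine), satisfies $\phi\le 0$ by $\mathcal P(L_s)\ge\mathcal P(L_0)$, and vanishes at $s=0$. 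A convex function attaining its maximum at an interior point of its domain is constant there, so $\phi\equiv 0$; in particular $\mathcal P(L_s)\equiv\mathcal P(L_0)$ on $(-a,a)$ and $s\mapsto|L_s^\circ|^{-1}$ is affine.

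The equality case of Proposition~\ref{lm:mr} now produces $w\in\RR^n$ and $\alpha\in\RR$ with $L_s = A_s(L_0)$ and $A_s(x) = x + s(w\cdot x + \alpha)\theta$. Since $A_s$ must permute vertices and is close to the identity for small $s$, each $\pm m_{k,l}$ with $\{k,l\}\neq\{i,j\}$ is fixed, yielding $\pm w\cdot m_{k,l}+\alpha=0$; this forces $\alpha=0$ and $w\cdot m_{k,l}=0$ for all such molecules. On the other hand $A_s(m_{i,j})=(1+s)m_{i,j}$ gives $w\cdot m_{i,j}=\norm{m_{i,j}}\neq 0$. Since $\{a_i,a_j\}$ is not a bridge, the graph $G(M,d)$ minus this edge is connected and contains a spanning tree $T$; by Lemma~\ref{lemma:basis}, $\{m_e:e\in T\}$ is a basis of $\RR^n$, so $w=0$, contradicting $w\cdot m_{i,j}\neq 0$. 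The step I expect to be the main obstacle is the very first one, namely verifying that $L_s$ is really the Lipschitz-free ball of a valid metric space for both signs of $s$: the automatic adjustment of ``non-edge'' distances via the shortest-path construction is what absorbs the tight triangle inequalities that may involve $d_{i,j}$.
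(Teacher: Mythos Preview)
Your proof is correct and follows the same route as the paper: perturb the weight on $\{a_i,a_j\}$ to obtain a shadow system of Lipschitz-free balls with affine volume (via the simplicial hypothesis and Lemma~\ref{lemma:charmetricgraphs}), combine minimality with the Meyer--Reisner equality case of Proposition~\ref{lm:mr}, and conclude with the spanning-tree basis Lemma~\ref{lemma:basis}. The paper's version is terser---it cites Lemma~2 of \cite{FMZ} to jump directly to ``$B_{\mathcal F(M)}$ is a double cone with apex $m_{i,j}$'' instead of extracting the affine map $A_s$ and reading off $w\cdot m_{k,l}=0$---but that lemma is precisely what you have unpacked, and indeed the paper's remark after the proposition sketches exactly your variant via \cite{MR2}.
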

\begin{proof}
	Let $G=(V,E,w)$, be the graph of $M$. Let $G^t=(V,E,w^t)$ be the weighted graph obtained from $G$ by replacing the weights by $w^t(\{a_i,a_j\})=w(\{a_i,a_j\})/(1+t)$ and $w^t(\{u,v\})=w(\{u,v\})$ for any $\{u,v\}\in E\setminus\{\{a_i,a_j\}\}$. By Lemma~\ref{lemma:charmetricgraphs}, if $|t|$ is small enough then there is a metric $d^t$ on $M$ such that $G^t=G(M,d^t)$. Moreover, $t\mapsto |B_{\mathcal F(M,d^t)}|$ is affine if $|t|$ is small enough since all the faces containing $m_{i,j}$ are simplices. By the minimality of $\mathcal P(M)$ and Lemma~2 from \cite{FMZ}, $B_{\mathcal F(G)}$ must be a double-cone with apex $m_{i, j}$. Thus, $\operatorname{span}(\ext (B_{\mathcal F(M)})\setminus\{\pm m_{i,j}\})$ has dimension $n-1$. We claim that this implies that $\{a_i,a_j\}$ is a bridge in $G$. Indeed, otherwise there would exists a spanning tree in $G$ not containing the edge $\{a_i, a_j\}$, and the vector space generated by the corresponding molecules would be $n$-dimensional by Lemma~\ref{lemma:basis}.  
\end{proof}

Let us remark that it is also possible to give an alternative proof of Proposition~\ref{prop:doublecone} using the characterization of isometric Lipschitz-free spaces given in Theorem~\ref{th:isometries} and the properties of a shadow system. Indeed, by \cite{MR2} we have that $B_{\mathcal F(M,d^t)}$ is a linear image of $B_{\mathcal F(M)}$. Then there is a cyclic bijection between the edges of $G(M,d)$ and the ones of $G(M, d^t)$ such that $d_t/d$ is constant on each 2-connected component. Thus the edge $\{a_i,a_j\}$ is the only element of its 2-connected component, that is, it is a bridge. 

\begin{theorem} Let $(M,d)$ be a metric space with minimal volume product such that $B_{\mathcal F(M)}$ is simplicial. Then $M$ is a tree. 
\end{theorem}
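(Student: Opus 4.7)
The plan is to reduce the statement immediately to Proposition~\ref{prop:doublecone}, which already does nearly all of the work. The hypothesis that $B_{\mathcal F(M)}$ is simplicial means, by definition, that every facet of $B_{\mathcal F(M)}$ is a simplex. In particular, for every vertex $m_{i,j}$ of $B_{\mathcal F(M)}$, all the facets containing $m_{i,j}$ are simplices, which is precisely the hypothesis of Proposition~\ref{prop:doublecone}.

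Next, I would recall from Corollary~\ref{coro:extreme} that the vertices of $B_{\mathcal F(M)}$ are exactly the molecules $\pm m_{i,j}$ indexed by edges $\{a_i,a_j\}\in E$ of the canonical graph $G(M,d)$. Therefore, applying Proposition~\ref{prop:doublecone} to each vertex $m_{i,j}$ of $B_{\mathcal F(M)}$ (which exists because $\mathcal{P}(M)$ is assumed minimal), we conclude that \emph{every} edge $\{a_i,a_j\}$ of $G(M,d)$ is a bridge.

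Finally, I would invoke the standard graph-theoretic fact that a connected graph in which every edge is a bridge is acyclic, and hence is a tree. Since $G(M,d)$ is connected by construction, this finishes the proof, and in view of Proposition~\ref{prop:tree} we equivalently conclude that $B_{\mathcal F(M)}$ is a linear image of $B_1^n$.

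There is essentially no obstacle left once one cites Proposition~\ref{prop:doublecone}: all the real work, namely the shadow-movement argument producing an affine volume function and Meyer--Reisner's lemma forcing a double-cone structure, has already been carried out there. The only mildly nontrivial observation is the one provided by Corollary~\ref{coro:extreme}, that the vertices of $B_{\mathcal F(M)}$ are in one-to-one correspondence with the edges of $G(M,d)$, so the conclusion of Proposition~\ref{prop:doublecone} can indeed be applied uniformly to all edges.
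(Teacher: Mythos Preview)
Your proposal is correct and follows essentially the same approach as the paper: the paper's proof is the single line ``Apply Proposition~\ref{prop:doublecone} to get every edge in $G(M,d)$ is a bridge,'' and you have simply spelled out why the simpliciality hypothesis feeds into Proposition~\ref{prop:doublecone} and why a connected graph with only bridges is a tree. There is no substantive difference.
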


\begin{proof}
	Apply Proposition~\ref{prop:doublecone} to get every edge in $G(M,d)$ is a bridge. 
\end{proof}

We note that the minimal case for four points corresponds to the question on the minimality of volume product in $\RR^3$. That question was recently solved in \cite{IS} and automatically gives $\PP(M)\geq \PP(B_1^3),$ for $M$ being a metric space of four elements. Here we present a direct proof that uses the structure of polytope of $B_{\mathcal F (M)}$. To that end, we need the following simple observation. 

\begin{lemma}\label{lemma:edgedisappears}  Assume that there is a face of $B_{\mathcal F(M)}$ containing the molecules $m_{i,j}$ and $m_{j,k}$. Then $m_{i,k}$ is not a vertex of $B_{\mathcal F(M)}$.
\end{lemma}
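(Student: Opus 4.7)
The plan is a short argument that extracts the triangle inequality saturation from the shared face, then invokes Proposition~\ref{extreme}.

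First I would use the fact that any face of $B_{\mathcal F(M)}$ is contained in an exposed face: if $m_{i,j}$ and $m_{j,k}$ belong to a common face $F$, then there exists $f \in S_{\Lip_0(M)}$, i.e.\ a $1$-Lipschitz function with $f(a_0)=0$, such that $\langle f, m_{i,j}\rangle = \langle f, m_{j,k}\rangle = 1$. Unpacking the definition of the molecules, this means
\[
f(a_i)-f(a_j) = d_{i,j} \qquad \text{and} \qquad f(a_j)-f(a_k) = d_{j,k}.
\]

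Next, adding these two equalities yields $f(a_i)-f(a_k) = d_{i,j}+d_{j,k}$. Since $f$ is $1$-Lipschitz, $f(a_i)-f(a_k) \leq d_{i,k}$, and the triangle inequality provides the reverse bound $d_{i,k}\leq d_{i,j}+d_{j,k}$. Hence equality holds: $d_{i,k}=d_{i,j}+d_{j,k}$, with $j\notin\{i,k\}$ (the three indices are distinct since otherwise $m_{i,j}=\pm m_{j,k}$ forces a degeneracy incompatible with both lying in a proper face).

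Finally, I would apply the implication (ii)$\Rightarrow$(i) of Proposition~\ref{extreme}: the existence of such a $j$ is exactly the condition that prevents $m_{i,k}$ from being an extreme point of $B_{\mathcal F(M)}$. Since for polytopes vertices coincide with extreme points, $m_{i,k}$ is not a vertex. There is no genuine obstacle here; the only small point to be careful about is handling the degenerate cases where two of $i,j,k$ coincide, which are ruled out by the assumption that $m_{i,j}$ and $m_{j,k}$ are distinct molecules lying on a (proper) face.
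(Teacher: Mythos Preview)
Your proof is correct and follows essentially the same route as the paper's own argument: pick a $1$-Lipschitz exposing functional for the face, add the two equalities $f(a_i)-f(a_j)=d_{i,j}$ and $f(a_j)-f(a_k)=d_{j,k}$, use the Lipschitz bound to force $d_{i,k}=d_{i,j}+d_{j,k}$, and conclude via Proposition~\ref{extreme}. The paper's version is just a terser rendition of the same computation.
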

\begin{proof}
	In that case there is a $1$-Lipschitz function $f$ such that  $f(a_i)-f(a_j)=d_{i,j}$ and $f(a_j)-f(a_k)=d_{j,k}$. Thus
	\[ d_{i,j}+d_{j,k}=f(a_i)-f(a_k)\leq d_{i,k}\]
	and so $m_{i,k}$ is not a vertex. 
\end{proof}

\begin{theorem} Let $(M,d)$ be a metric space with four points. Then $\mathcal P(M)\geq \mathcal P(B_1^3)$. Moreover, the equality holds if and only if $M$ is a tree or a cycle with equal weights. 
\end{theorem}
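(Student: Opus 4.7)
The plan is to analyze the canonical graph $G(M,d)$, which is a connected graph on $4$ vertices with between $3$ and $6$ edges. Either $G$ contains a bridge (equivalently, $M$ is decomposable via the $\diamond$ operation), or $G$ is $2$-connected; the $2$-connected graphs on $4$ vertices are $C_4$, $K_4 - e$, and $K_4$.

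If $G$ has a bridge, we may write $M = M_1 \diamond M_2$ with $|M_1| = 2$ and $|M_2| = 3$. Formula~\eqref{eq:diamond} then gives
\[\mathcal P(M) = \frac{1!\cdot 2!}{3!}\,\mathcal P(M_1)\,\mathcal P(M_2) = \frac{4}{3}\,\mathcal P(M_2).\]
Mahler's two-dimensional inequality~\cite{Ma1} yields $\mathcal P(M_2) \geq \mathcal P(B_1^2) = 8$, hence $\mathcal P(M) \geq \frac{32}{3} = \mathcal P(B_1^3)$. Equality in dimension two characterizes parallelograms; since $B_{\mathcal F(M_2)}$ has $2|E(M_2)|$ vertices by Corollary~\ref{coro:extreme}, this forces $|E(M_2)| = 2$, so $M_2$ is a tree, and hence $M$ itself is a tree.

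If $G = C_4$, then $B_{\mathcal F(M)}$ is a symmetric polytope in $\mathbb R^3$ with exactly $8 = 2n+2$ vertices, so the result of~\cite{Ka} (mentioned earlier in this section) directly yields $\mathcal P(M) \geq \mathcal P(B_1^3)$, with equality only if $B_{\mathcal F(M)}$ is a linear image of a Hanner polytope. The Hanner polytopes in $\mathbb R^3$ are (up to linear equivalence) $B_1^3$ with $6$ vertices and $B_\infty^3$ with $8$ vertices, so equality forces $B_{\mathcal F(M)}$ to be a linear image of the cube, which by Corollary~\ref{cor:ellinfty} happens precisely when $M$ is a cycle with equal weights.

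Finally, we must rule out $G = K_4 - e$ and $G = K_4$. Assume $M$ is a minimizer with such a $G$. Since no edge of $G$ is a bridge, Proposition~\ref{prop:doublecone} forces every vertex $m_{i,j}$ of $B_{\mathcal F(M)}$ to lie in some non-simplicial facet, i.e., a $2$-dimensional face with at least $4$ vertices. Simultaneously, Lemma~\ref{lemma:edgedisappears} forbids the coexistence of any two molecules $m_{a,b},\,m_{b,c}$ in a common face whenever $\{a,c\}$ is an edge of $G$; in $K_4$ all such ``diagonal'' pairs are edges, and in $K_4 - e$ all but one such pair are edges. A combinatorial analysis then forces the molecules of every non-simplicial facet to form a $K_{2,2}$-type bipartite ``source-to-sink'' configuration supported on all four points of $M$, which imposes metric equalities (through the face-exposing Lipschitz function) that conflict with the relevant diagonal pairs being edges of $G$. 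The main obstacle is this last combinatorial step: reconciling the abundance of non-simplicial facets forced by Proposition~\ref{prop:doublecone} with the severe restrictions on their vertex sets coming from Lemma~\ref{lemma:edgedisappears}, so as to reach a contradiction with $G \in \{K_4 - e,\,K_4\}$.
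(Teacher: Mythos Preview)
Your treatment of the bridge case and the $C_4$ case matches the paper's, and the $K_4-e$ case can indeed be eliminated along the lines you sketch: if $\{1,3\}$ is the diagonal, one checks via Lemma~\ref{lemma:edgedisappears} that any facet containing $m_{1,3}$ has at most three vertices, so Proposition~\ref{prop:doublecone} forces $\{1,3\}$ to be a bridge, a contradiction.

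The genuine gap is the $K_4$ case. Your plan is to reach a contradiction from the non-simplicial facet structure, but no contradiction is available. You correctly identify that a four-vertex facet must be of the form $\{m_{i,j},m_{i,l},m_{k,j},m_{k,l}\}$ with sources $\{i,k\}$ and sinks $\{j,l\}$; however, the coplanarity of these four molecules only yields the relation
\[
d_{i,j}+d_{k,l}=d_{i,l}+d_{k,j},
\]
which is perfectly compatible with all six triangle inequalities being strict. So the ``metric equalities that conflict with the diagonal pairs being edges'' that you anticipate simply do not materialize: a $K_4$ minimizer satisfying the four-point condition is not internally inconsistent.

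The paper's way out is different and is the idea you are missing. After ordering the three sums $d_{1,0}+d_{2,3}\le d_{2,0}+d_{1,3}\le d_{3,0}+d_{1,2}$ and analyzing the non-simplicial facet through $m_{3,0}$, one obtains exactly one of the above equalities, and in either case $d_{2,0}+d_{1,3}=d_{3,0}+d_{1,2}$. This is the four-point condition, so by Godard's theorem $M$ embeds into a tree and $B_{\Lip_0(M)}$ is a zonoid; Mahler's conjecture is known for zonoids \cite{R1,GMR}, giving $\mathcal P(M)\ge \mathcal P(B_1^3)$. Equality is then excluded because the zonoid equality case would force $B_{\mathcal F(M)}$ to be linearly equivalent to $B_\infty^3$ with $8$ vertices, whereas a complete graph gives $12$. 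Thus the $K_4$ case is handled by an appeal to the zonoid result, not by a combinatorial contradiction.
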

\begin{proof}
	Let $G=(V,E,w)$ be the canonical graph associated with $M$. Note first that if the graph $G$ has a leaf, then $M=M_1\diamond M_2$, where $\#M_1=1$ and $\#M_2=3$. Then $\mathcal P(M)\geq \mathcal P(B_1^3)$ follows from \eqref{eq:diamond} and the two dimensional case of Mahler's conjecture. Moreover, if equality holds then $M_2$ has minimal volume product and so it is a tree, thus $M$ is a tree too. 
	
	Thus, we may assume that $G$ does not any leaf. Assume also that $M$ has minimal volume product among all metric spaces with four elements. Then every vertex of $B_{\mathcal F(M)}$ belonging to a facet that contains at least four vertices. Indeed, otherwise Proposition~\ref{prop:doublecone} says that $G$ has a bridge, and so it has a leaf. Therefore, every edge in $G$ belongs to a cycle, which has length $4$ by Lemma~\ref{lemma:edgedisappears}. So either $G$ is a cycle or it is a complete graph. In the first case $B_{\mathcal F(M)}$, has $8$ vertices that lie in two parallel facets. Then \cite{FMZ} (see also \cite{LopezReisner, Ka}) says that $\mathcal P(M)\geq \mathcal P(B_1^3)$, and if equality holds then either $B_{\mathcal F(M)}$ is a double cone or it is affinely isometric to $B_\infty^3$. In the first case, $6$ of the vertices of $B_{\mathcal F(M)}$ would be coplanar and it is easy to check that is not possible. On the other hand, one can check that if $\mathcal F(M)$ is isometric to $\ell_\infty^3$ then $M$ is a cycle with equal weights, this also follows from Corollary~\ref{cor:ellinfty}. 
	
	Thus, it remains to check the case in which $G$ is a complete graph. By relabeling the points of $M$ we may assume that
	\[d_{1,0}+d_{2,3}\leq d_{2,0}+d_{1,3}\leq d_{3,0}+d_{1,2}.\]
	Let $F$ be a facet of $B_{\mathcal F(M)}$ containing $m_{3,0}$ and three other vertices. Thanks to Lemma~\ref{lemma:edgedisappears}, there are only two possible cases:

	\emph{Case 1. $F$ contains $m_{3,0}$, $m_{3,2}$, $m_{1,2}$ and $m_{1,0}$.} Consider the following determinant.\\
	$$\Delta=\left|\begin{array}{cccc}
	1 			& 1 			& 1 & 1       \\
	0 			& 0 			& d_{1,2}^{-1}   & d_{1,0}^{-1}     \\
	0 			& -d_{2,3}^{-1} & -d_{1,2}^{-1}  & 0     \\
	d_{3,0}^{-1} & d_{2,3}^{-1} & 0				 & 0
	\end{array}\right|=\frac{d_{3,0}+d_{1,2}-d_{2,3}-d_{1,0}}{d_{1,0}d_{1,2}d_{3,0}d_{2,3}}$$
	We have that $\Delta=0$ since the four molecules $m_{3,0}$, $m_{3,2}$, $m_{1,2}$ and $m_{1,0}$ are coplanar, and so $d_{1,0}+d_{2,3}=d_{3,0}+d_{2,3}$.
	
	\emph{Case 2. $F$ contains $m_{3,0}$, $m_{3,1}$, $m_{2,1}$ and $m_{2,0}$}. The same argument as before yields $d_{2,0}+d_{1,3}=d_{3,0}+d_{1,3}$.
	
	We conclude that in any case $d_{2,0}+d_{1,3}=d_{3,0}+d_{1,3}$. Then $M$ satisfies the four-point condition and so $B_{\Lip_0(M)}$ is a zonoid \cite{Go}. Therefore $\mathcal P(M)\geq \mathcal P(B_1^3)$. Moreover, equality does not hold in this case since that would imply that $B_{\mathcal F(M)}$ is affinely isomorphic to $B_\infty^3$ \cite{GMR}, so it would have $8$ vertices. However, $B_{\mathcal F(M)}$ has $12$ vertices since $G$ is complete. 
\end{proof}	 

\textbf{Acknowledgements}: The authors thank Matthieu Meyer and Gideon Schechtman for their valuable suggestions.


\begin{thebibliography}{99}

\bibitem[AFZ]{AFZ} {\sc   M.~Alexander, M.~Fradelizi and A.~Zvavitch,}  {\it Polytopes of Maximal Volume Product,}  Discrete Comput. Geom. 62 (2019), no. 3, 583-600.


\bibitem[AG]{AG} {\sc R.~J.~Aliaga and A.~J.~Guirao,} {\it On the preserved extremal structure of Lipschitz-free Banach spaces,} Studia Math. 245(1) (2019) 1-14.

\bibitem[AP]{AP} {\sc R.~J.~Aliaga  and E.~Perneck\'a,} {\it Supports and extreme points in {L}ipschitz-free spaces}, to appear in Rev. Matem. Iberoam. 

\bibitem[BF]{BF}{\sc F.~Barthe and M.~Fradelizi,} {\it The volume product of convex bodies with many hyperplane symmetries},  Amer. J. Math. 135 (2013), 1-37.

\bibitem[Ba]{Ba} {\sc R.~B.~Bapat}, {\it Graphs and matrices}. Second edition. Universitext. Springer, London; Hindustan Book Agency, New Delhi, 2014. 

\bibitem[BM]{BM}{\sc J.~Bourgain and V.~Milman,} {\it New volume ratio properties for
convex symmetric bodies in $\RR^n$\/},  Invent.\ Math.\ 88 (1987), 319-340.


\bibitem[Bo]{Bo}{\sc E.~D.~Bolker,} {\it A class of convex bodies}, Trans. Amer. Math. Soc. 145 (1969), 323-345.

\bibitem[CG]{CG}{\sc S.~Campi and  P.~Gronchi,}  {\it On volume product inequalities for convex sets,} Proc. Amer. Math. Soc. 134 (2006), no. 8, 2393--2402.

\bibitem[DDM]{DDM}{\sc A.~D'Al\`i, E.~Delucchi, and M.~Micha\l{}ek}, {\it
	Many faces of symmetric edge polytopes}, arXiv:1910.05193


\bibitem[DKO]{DKO}{\sc S.~Dilworth, D.~Kutzarova, and M.~Ostrovskii,}{\it Lipschitz-free Spaces on Finite Metric Spaces,} to appear in Canad. J. Math. 


\bibitem[Fa]{Fa}{\sc J.  Farmer}, {\it 
Extreme points of the unit ball of the space of Lipschitz functions.}
Proc. Amer. Math. Soc. 121 (1994), no. 3, 807--813.


\bibitem[FHMRZ]{FHMRZ}{\sc M.~Fradelizi, A.~Hubard, M.~Meyer, E.~Rold\'an-Pensado, A. Zvavitch, } {\it Equipartitions and Mahler volumes of symmetric convex bodies}, arXiv:1904.10765.

\bibitem[FMZ]{FMZ}{\sc M.~Fradelizi, M.~Meyer, and A.~Zvavitch},  {\it An application of shadow systems to Mahler's conjecture,}   Discrete Comput. Geom. 48 (2012), no. 3, 721-734.

\bibitem[GPR]{GPR}
{\sc L.~Garc\'ia-Lirola, A.~Proch\'azka, and A.~Rueda~Zoca},
{\it A characterisation of the {D}augavet property in spaces of
{L}ipschitz functions}, J. Math. Anal. Appl. 464 (2018), 473--492.

\bibitem[GPV]{GPV}{\sc A.~Giannopoulos, G.~Paouris, and B.~Vritsiou}, {\it The isotropic position and the reverse Santal\'o inequality\/}, Israel J.\ Math. 203 (2015), no. 1, 1-22.

\bibitem[Go]{Go}{\sc A.~Godard}, {\it Tree metrics and their Lipschitz-free spaces}, Proc. Amer. Math. Soc. 138 (2010), no. 12, 4311-4320.

\bibitem[G]{G}{\sc G.~Godefroy,} {\it A survey on Lipschitz-free Banach spaces}, Comment. Math. 55 (2015), no. 2, 89-118.

\bibitem[GK]{GK}{\sc G.~Godefroy and N.~J.~Kalton,}  {\it Lipschitz-free Banach spaces}. Dedicated to Professor Aleksander Pelczynski on the occasion of his 70th birthday. Studia Math. 159 (2003), no. 1, 121--141.

\bibitem[GW]{GW} {\sc P.~R.~Goodey and W.~Weil}, Zonoids and generalisations. In Handbook of Convex Geometry
(P. M. Gruber, J. M. Wills, eds), vol. B, pp. 1297-1326, North-Holland, Amsterdam,
1993.

\bibitem[GP]{GP}{\sc J.~Gordon and F. Petrov}, {\it Combinatorics of the Lipschitz polytope}, 
Arnold Math. J. 3 (2017), no. 2, 205–218.

\bibitem[GM]{GM}{\sc Y.~Gordon and M.~Meyer,}  {\it On the Minima of the Functional Mahler Product}, Houston J. Math. 40 (2014), no. 2, 385-393.

\bibitem[GMR]{GMR}{\sc Y.~Gordon, M.~Meyer, and S.~Reisner}, {\it Zonoids with
minimal volume  product - a new proof\/}, Proc.\ Amer.\ Math.\ Soc.\
104 (1988), 273-276.

\bibitem[Gr]{Gr} {\sc B.~Gr\"unbaum}, {\it Convex Polytopes},  Graduate Texts in mathematics, 221, Springer, 2003.


\bibitem[HL]{HL}{\sc A.~B.~Hansen and A.~Lima}, {\it The structure of finite-dimensional Banach spaces with the $3.2.$ intersection property}, Acta Math. 146 (1981), no. 1-2, 1–23.

\bibitem[IKW]{IKW2} {\sc Y.~Ivakhno, V.~Kadets and D.~Werner}, \textit{Corrigendum to: The Daugavet property for spaces of Lipschitz functions}, Math. Scand. 104 (2009), 319-319.

\bibitem[IS]{IS} {\sc H.~Iriyeh and M.~Shibata}, {\it Symmetric Mahler's conjecture for the volume product in the three dimensional case},  arXiv:1706.01749.

\bibitem[KMO]{KMO} {\sc S. Khan, M. Mim, and M. Ostrovskii},
{\it Isometric copies of $\ell^n_\infty$ and $\ell^n_1$ in transportation cost spaces on finite metric spaces}, arXiv:1907.01155


\bibitem[Ka]{Ka}{\sc R.~Karasev}, {\it Mahler's conjecture for some hyperplane sections, } arXiv:1902.08971.

\bibitem[Ki]{Ki}{\sc J.~Kim}, {\it
Minimal volume product near Hanner polytopes}, J. Funct. Anal. 266 (2014), no. 4, 2360–2402.

\bibitem[Ko]{Ko} {\sc A.~Koldobsky}, {\it Fourier analysis in convex geometry}. Mathematical Surveys and Monographs, 116. American Mathematical Society, Providence, RI, 2005.

\bibitem[Ku]{Ku}{\sc G.~Kuperberg}, {\it  From the Mahler Conjecture to Gauss Linking Integrals,} Geometric And Functional Analysis,         18/
3, (2008), 870--892.

\bibitem[LR]{LopezReisner} {\sc M.~A.~Lopez and S.~Reisner}, {\it A special case of {M}ahler's conjecture}, Discrete Comput. Geom., 20,
(1998), 2 163--177.

\bibitem[Ma]{Ma1} {\sc K.~Mahler}, {\it Ein Minimalproblem f\"ur konvexe Polygone\/}, Mathematica
(Zutphen) B7 (1939), 118-127.


\bibitem[Me]{Me1}{\sc M.~Meyer}, {\it Une caract\'erisation volumique de certains espaces
norm\'es,} Israel J. Math. 55 (1986), 317-326.


\bibitem[MR1]{MR1} {\sc M.~Meyer and S.~Reisner}, {\it Inequalities involving integrals of
polar-conjugate concave functions\/}, Monatsh.\ Math.\ 125 (1998), 219-227.


\bibitem[MR2]{MR2} {\sc M.~Meyer and S.~Reisner}, {\it Shadow systems and volumes of polar convex bodies}, Mathematika 53 (2006), no. 1, 129-148 (2007)


\bibitem[Na]{Na}{\sc F.~Nazarov}, {\it  The H\"ormander proof of the Bourgain-Milman Theorem\/},  
GAFA Seminar Notes,
Lecture Notes in Mathematics, Vol. 2050, (2012), 335--343.


\bibitem[NPRZ]{NPRZ}{\sc F.~Nazarov, F.~Petrov, D.~Ryabogin, and A.~Zvavitch}, {\it A remark on the Mahler conjecture: local minimality of the unit cube\/},  Duke Math.\ J. 154 (2010), 419--430.

\bibitem[OO]{OO}{\sc S.~Ostrovska, M.~Ostrovskii}, {\it On relations between transportation cost spaces and $L_1$}, arXiv:1910.03625. 


\bibitem[Os]{Os}{\sc M.~Ostrovskii,} {\it 
Metric embeddings. 
Bilipschitz and coarse embeddings into Banach spaces} De Gruyter Studies in Mathematics, 49. De Gruyter, Berlin, 2013. xii+372 pp.



\bibitem[PR]{PR}{\sc A.~Proch\'azka and A.~Rueda Zoca}, {\it
A characterisation of octahedrality in Lipschitz-free spaces},
Ann. Inst. Fourier (Grenoble) 68 (2018), no. 2, 569–588.

\bibitem[R1]{R1}{\sc S.~Reisner}, {\it  Zonoids with minimal volume--product},
Math.\ Zeitschrift 192 (1986), 339-346.

\bibitem[R2]{R2}{\sc S.~Reisner}, {\it Minimal volume product in Banach spaces with a 1-unconditional basis\/}, J. London Math.\ Soc.\ 36 (1987), 126-136.


\bibitem[RSW]{RSW}{\sc S.~Reisner, C.~Sch\"utt, and E.~M.~Werner},  {\it Mahler's conjecture and curvature}, Int. Res. Not. 2012, no. 1, 1Ð-16.


\bibitem[RS]{RS}{\sc C.~A.~Rogers and G.~C.~Shephard}, {\it Some external problems for convex bodies}, Mathematika 5 (1958), 93-102.

\bibitem[RZ]{RZ}   {\sc D.~Ryabogin and A.~Zvavitch,} {\it Analytic methods in convex geometry}, IM PAN Lecture Notes, Vol. 2, Warsaw 2014.



\bibitem[SR]{SR} {\sc J.~Saint Raymond},  {\it Sur le volume des corps convexes
sym\'{e}triques}. S\'{e}minaire d'Initiation \`{a} l'Analyse,
1980-81, Universit\'{e} Paris VI, 1981.



\bibitem[Sc]{Sc} {\sc R.~Schneider}, {\it  Convex bodies: the Brunn-Minkowski theory}. Second expanded edition.
Encyclopedia of Mathematics and its Applications 151, Cambridge University Press, Cambridge, 2014.

\bibitem[St]{St} {\sc A.~Stancu}, {\it Two volume product inequalities and their applications}, Canad. Math. Bull.
52 (2009), 464-472.


\bibitem[Sh]{Sh} {\sc G.~C.~Shephard}, {\it Combinatorial properties of associated zonotopes}. Canad. J. Math. 26 (1974), 302-321.
 
\bibitem[Ve]{Ve}{A.~Vershik}, {\it Classification of finite metric spaces and combinatorics of convex polytopes}, 
Arnold Math. J. 1 (2015), no. 1, 75–81.

\bibitem[We]{We} {\sc N.~Weaver},  {\it Lipschitz algebras}. World Scientific Publishing Co., Inc., River Edge, NJ, 1999.

\bibitem[Wh]{Wh}{\sc H.~Whitney}, {\it 2-Isomorphic Graphs}.
Amer. J. Math. 55 (1933), no. 1-4, 245–254.


\end{thebibliography}
\end{document}